\def\arxivVersion{}
\def\mytitle{Symbolic Optimal Control}
\def\myname{
\ifx\shortnames\undefined Gunther \fi
Reissig
and
\ifx\shortnames\undefined Matthias \fi
Rungger
}
\def\mykeywords{Discrete abstraction, optimal control, difference
  inclusion, nonlinear system, symbolic control, approximate dynamic programming%
\ifx\arxivVersion\undefined\relax\else;
MSC: Primary, 49M25;
Secondary, 93C10, 93C55, 93C73%
\fi}
\let\submissionNote=\relax
\let\websourceNote=\relax
\def\DraftVersion{}
\def\submissionNote{This work has been accepted for publication in the
\emph{IEEE Trans. Automatic Control}. Please refer to
\url{http://dx.doi.org/10.1109/TAC.2018.2863178}
for the definite publication.
}
\def\websourceNote{To reference this work, please find a {Bib\TeX}
  entry at
\href{http://www.reiszig.de/gunther/pubs/i13absoc.html}{author's homepage}.
}
\date{}
\def\emph#1{\textit{#1}}
\renewcommand\sout{\bgroup\markoverwith
{\textcolor{red}{\rule[.5ex]{2pt}{1pt}}}\ULon}
\def\sout#1{\bgroup\markoverwith
{\textcolor{red}{\rule[.5ex]{2pt}{1pt}}}\ULon{#1}}
\algnewcommand\algorithmicinput{\textbf{Input:}}
\algnewcommand\Input{\item[\algorithmicinput]}
\algnewcommand\algorithmicoutput{\textbf{Output:}}
\algnewcommand\Output{\item[\algorithmicoutput]}
\algnewcommand\algorithmicparameter{\textbf{Parameter:}}
\algnewcommand\Parameter{\item[\algorithmicparameter]}
\algrenewcommand\algorithmicindent{1.0em}%
\let\ORGforeignlanguage\foreignlanguage
\def\foreignlanguage#1{\lowercase{\ORGforeignlanguage{#1}}}
\def\MakeUppercase#1{#1}%
\def\markboth#1#2{\def\leftmark{\@IEEEcompsoconly{\sffamily}\MakeUppercase{#1}}%
\def\rightmark{\@IEEEcompsoconly{\sffamily}\MakeUppercase{#2}}}
\def\href#1#2{\texttt{#2}}
\def\emptyset{\varnothing}
\newcites{review}{References}}{}
\begin{document}
\makeatletter
\markboth{\def\shortnames{}\myname\hspace*{\fill}\def\\{ }\mytitle\hspace*{\fill}\@date\hspace*{\fill}\ifx\DraftVersion\undefined\relax\else\hspace*{\fill}svn: \svnrev\fi\hspace*{\fill}}%
{\def\shortnames{}\myname\hspace*{\fill}\def\\{ }\mytitle\hspace*{\fill}\@date\hspace*{\fill}\ifx\DraftVersion\undefined\relax\else\hspace*{\fill}svn: \svnrev\fi\hspace*{\fill}}%
\makeatother

\title{\mytitle}

\author{\myname%
\thanks{%
\ifx\DraftVersion\undefined\relax\else%
Corresponding author: %
\fi
G.~Reissig is with the
Bundeswehr University Munich,
Dept. Aerospace Eng.,
Chair of Control Eng. (LRT-15),
D-85577 Neubiberg (Munich),
Germany,
\ifx\DraftVersion\undefined%
\url{http://www.reiszig.de/gunther/}%
\else%
\url{gunther@reiszig.de}%
\fi%
}%
\thanks{%
M.~Rungger is with the Hybrid Control Systems Group at the Department of
Electrical and Computer Engineering at the Technical University of Munich, Germany%
\ifx\DraftVersion\undefined.\else%
, \url{matthias.rungger@tum.de}%
\fi
}%
\thanks{This work has been supported by the German Research Foundation
  (DFG) under grant no. RE 1249/4-1.%
\ifx\arxivVersion\undefined\relax\else%
{} \submissionNote{} \websourceNote%
\fi%
}
}

\maketitle

\begin{abstract}
\noindent
We present novel results on the solution of a class of leavable, undiscounted
optimal control problems in the minimax sense for nonlinear, continuous-state,
discrete-time plants. The problem class includes entry-(exit-)time problems as
well as minimum time, pursuit-evasion and reach-avoid games as special cases.
We utilize auxiliary optimal control problems (``abstractions'') to compute both
upper bounds of the value function, i.e., of the achievable closed-loop
performance, and symbolic feedback controllers realizing those bounds. The
abstractions are obtained from discretizing the problem data, and we prove that
the computed bounds and the performance of the symbolic controllers converge to
the value function as the discretization parameters approach zero. In
particular, if the optimal control problem is solvable on some compact subset of
the state space, and if the discretization parameters are sufficiently small,
then we obtain a symbolic feedback controller solving the problem on that subset. These results
do not assume the continuity of the value function or any problem data, and they
fully apply in the presence of hard state and control constraints.
\end{abstract}

\ifx\arxivVersion\undefined\else%
\begin{IEEEkeywords}
\noindent
\mykeywords
\end{IEEEkeywords}
\fi

\section{Introduction}
\label{s:intro}

In this paper we present novel results on the solution of
optimal control problems, in which we follow a symbolic
synthesis approach \cite{Tabuada09,i14sym,BeltaYordanovGol17} and
utilize finite, auxiliary problems (``abstractions'')
obtained from discretizing the original problem data.
Our theory provides symbolic feedback controllers, and it culminates
in novel convergence and completeness results including the following:
If the optimal control problem is solvable on some compact subset of
the state space, and if the discretization parameters are sufficiently
small, then the obtained controller solves the problem on that subset.
More specifically,
we consider discrete-time control systems that are defined by difference
inclusions of the form
\begin{IEEEeqnarray}{c}\label{e:sys}
x(t+1) \in F( x(t), u(t) ),
\end{IEEEeqnarray}
where $x(t)\in X$ and $u(t)\in U$ represents the
\concept{state} and the \concept{input signal}, respectively. Typically, the sets $X$
and $U$ are uncountably infinite.
We use set-valued \concept{transition functions}
\mbox{$F \colon X\times U\rightrightarrows X$} to account for possible perturbations such as
actuator inaccuracies and modeling
uncertainties; see e.g.~\cite{i14sym}. 
The problem data also includes non-negative,
extended real-valued \concept{running} and \concept{terminal cost functions},
$g$ and $G$,
\begin{subequations}
\label{e:costfunction}
\begin{align}
\label{e:costfunction:running}
&g \colon X \times X \times U \to \mathbb{R}_+ \cup \{ \infty \},\\
\label{e:costfunction:terminal}
&G \colon X \to \mathbb{R}_+ \cup \{ \infty \},
\end{align}
\end{subequations}
where $\mathbb{R}_{+}$ denotes the set of non-negative reals.
As we demonstrate in Section \ref{s:Example}, infinite costs are useful to represent hard actuation and state constraints.

Given the aforementioned problem data, we investigate optimal control
problems where the evolution of the closed-loop must be stopped at
some finite, but not predetermined, time. At that point, the
\concept{total cost} is determined as the sum of the terminal cost and
the previously accumulated running costs.
We seek to synthesize
a feedback controller that minimizes, or
approximately minimizes, the total cost in the minimax (worst-case)
sense, in which the controller generates both an input signal for
the plant \ref{e:sys} and additionally a signal that determines the
stopping time. 
In particular, the considered optimal control problem is
\concept{leavable} as the controller is allowed to stop the evolution
of the closed-loop at any time \cite{MaitraSudderth96}. In contrast to
similar settings,
in our problem stopping is mandatory and not discretionary, and we
penalize non-stopping evolutions with infinite costs.
The problem class is formally defined in Section \ref{ss:ProblemDef} and
includes entry-(or exit-)time problems as well as
minimum time, pursuit-evasion and reach-avoid games as special cases.
Examples are given in Sections \ref{ss:SpecialCases} and \ref{s:Example}.

\emph{Outline of the Proposed Approach.}
We follow a symbolic synthesis approach
\cite{Tabuada09,i14sym,BeltaYordanovGol17}:
First, an \concept{abstraction}, i.e., a finite, auxiliary optimal
control problem, is constructed by discretizing the problem
data. Second, a controller solving the auxiliary problem is
synthesized, and third, the latter controller is refined to obtain a
controller for the original problem.
In this context, we label quantities and objects that are defined with
respect to the original and to the auxiliary optimal control problem
as \emph{concrete} and \emph{abstract}, respectively.

In our theory, abstractions shall be constructed so that the abstract
\emph{value function}, i.e., the best achievable performance of the
abstract closed-loop, provides an upper bound of the concrete value
function. Conforming to the correct-by-construction paradigm of the
symbolic approach, the theory also guarantees that the
\emph{closed-loop value function} associated with the abstract
controller, i.e., the worst-case performance of that controller used
in the abstract closed loop, provides an upper bound of the
closed-loop value function associated with the concrete controller.

Since even rather coarse discretizations of the problem data
may very well qualify as abstractions, the abstract
value function will provide a rather conservative bound on the
concrete value function, in general.
To resolve that issue,
we shall introduce a suitable
notion of \concept{conservatism} for abstractions, which is closely
related to the accuracy by which the problem data is
discretized. As our main results, we shall establish the
convergence of both of the aforementioned upper bounds to the concrete value
function as the conservatism of the abstraction approaches zero. In turn,
as we shall also show, our synthesis approach is complete in the
following sense: If the original optimal control problem is solvable
on a compact subset of the state space, then the obtained controller
solves the original problem on that subset whenever a sufficiently
precise abstraction is employed.

Our results do not assume the continuity of the value function or
any problem data, and they fully apply in the presence of hard state
and control constraints.
The resulting feedback controllers are memoryless, finitely
representable and symbolic, i.e., they require only quantized as
opposed to full state information.

\emph{Related Work.}
The symbolic synthesis scheme
has been applied to a variety of optimal control problems including
minimum time problems \cite{MazoTabuada10b,Girard11}, entry-time problems
\cite{deRooMazo13,BrouckeDiBenedettoDiGennaroSangiovanniVincentelli05} and
finite horizon problems \cite{TazakiImura12}. Optimality properties in
combination with regular language specifications are analyzed in
\cite{LeongPrabhakar16}.
The results in \cite{MazoTabuada10b,deRooMazo13} are based on
approximate alternating simulation relations. As discussed in detail
in \cite[Sec.~IV]{i14sym}, this leads to overly complex, dynamic
controllers which additionally require full state information. The
controllers synthesized in \cite{Girard11} also require full state
information. Moreover, while the works
\cite{BrouckeDiBenedettoDiGennaroSangiovanniVincentelli05,TazakiImura12,LeongPrabhakar16}
lead to arbitrarily close approximations of value functions,
the respective convergence results
do not account for perturbations
\cite{BrouckeDiBenedettoDiGennaroSangiovanniVincentelli05,TazakiImura12,LeongPrabhakar16},
do not apply in the presence of hard constraints and discontinuous
value functions
\cite{BrouckeDiBenedettoDiGennaroSangiovanniVincentelli05,TazakiImura12}, 
or require piecewise linear plant dynamics \cite{LeongPrabhakar16}.
Additionally, the approach in
\cite{BrouckeDiBenedettoDiGennaroSangiovanniVincentelli05} relies on
the ability to exactly determine first integrals of the plant
dynamics, and the one in \cite{LeongPrabhakar16}, on the ability to
verify a non-trivial property for an exact optimal solution (which is
assumed to exist).

Closely related to our approach is the numerical approximation of the value
function, which has a rich history and has been a major research focus since the early days of Dynamic Programming
\cite{BellmanDreyfus62}. Related convergence results for deterministic
finite and
infinite horizon optimal control problems can be found in
\label{review:item13:text}
\cite{BertsekasShreve96,Bertsekas13,JiangJiang14,Heydari16b,BokanowskiForcadelZidani10,FisacChenTomlinSastry15,MargellosLygeros13},
and for several classes of stochastic optimal control problems, in
\cite{KushnerDupuis92,BertsekasTsitsiklis96,DufourPrietoRumeau12,SaldiLinderYuksel17b}.
Convergence results for leavable deterministic optimal control problems (or
deterministic optimal stopping problems), as considered in this paper,
are presented in
\cite{KreisselmeierBirkholzer94,KordaHenrionJones16,BardiBottacinFalcone95,CardaliaguetQuincampoixSaintPierre99,GrueneJunge07,GrueneJunge08}.
The vast majority of works focus on the special cases of minimum time
\cite{BardiBottacinFalcone95,CardaliaguetQuincampoixSaintPierre99}
and \mbox{entry-(or exit-)}time problems
\cite{GrueneJunge07,GrueneJunge08}
or on discounted running costs \cite{KordaHenrionJones16}, or apply only
to continuous-time problems
\cite{KordaHenrionJones16,BardiBottacinFalcone95,CardaliaguetQuincampoixSaintPierre99}.
Additionally, these works do not account for perturbations
\cite{KreisselmeierBirkholzer94,KordaHenrionJones16},
or do not apply in the presence of hard constraints
\cite{KreisselmeierBirkholzer94,BardiBottacinFalcone95}
and discontinuous value functions
\cite{KreisselmeierBirkholzer94,GrueneJunge07}.
While the works
\cite{BardiBottacinFalcone95,CardaliaguetQuincampoixSaintPierre99,GrueneJunge08}
do account for discontinuous value functions,
the respective results do not lead to controllers whose closed-loop
performances arbitrarily closely approximate the value function.

Another line of related research originates from the extension of asymptotically optimal
sampling-based motion planing \cite{KaramanFrazzoli11} to kinodynamic planning
that takes nonlinear dynamics into account \cite{LiLittlefieldBekris16}. In
contrast to our approach, the goal is not to
synthesize optimal feedback controllers, but to find an open-loop
input signal that optimally steers the system from a
fixed initial state to fixed final state or final region. Consequently, perturbations cannot be considered.
In addition, the convergence results in \cite{LiLittlefieldBekris16}
are probabilistic and do not provide worst-case guarantees.

\emph{Summary of Contributions.}
In view of the preceding discussion, we summarize our contributions as
follows.
Firstly, we characterize the value function as the
maximal fixed point of an appropriately defined Dynamic Programming
operator. A detailed comparison with related results
is provided in Section \ref{s:OptimalityPrinciple}.
Secondly, we propose a correct-by-construction approach to synthesize
memoryless symbolic controllers requiring only quantized state information,
as well as guarantees in the form of upper bounds on the controllers'
worst-case performances, for general classes of plant dynamics and
cost functions (Section \ref{s:ComparisonRefinementAbstraction}).
Thirdly,
and most importantly, we establish powerful convergence and
completeness results (Section \ref{s:convergence}), which imply that
even in the presence of hard constraints and discontinuous value
functions, our method is capable of synthesizing controllers whose
performance guarantees arbitrarily closely approximate the best
achievable performance.
In Section \ref{s:Example}, we demonstrate
our approach on three examples.

For the sake of self-consistency of the paper, we present in Section
\ref{s:AlgorithmicSolution} our method from \cite{i17conv} to compute
abstractions for a class of sampled control systems, and we also
present an algorithm to efficiently solve auxiliary, abstract optimal
control problems.
In the Appendix we collect some auxiliary results numbered
\ref{prop:hypoLimit:i13absoc} through \ref{cor:uscCompact}.
Preliminary versions of some of the results in this paper have been
announced
in \cite{i13absocc}.

\section{Preliminaries}
\label{s:prelims}
The relative complement of the set $A$ in the set $B$ is denoted by
$B \setminus A$.
$\mathbb{R}$, $\mathbb{R}_+$, $\mathbb{Z}$ and $\mathbb{Z}_{+}$ denote the sets of
real numbers, non-negative real numbers, integers and non-negative integers, respectively,
and $\mathbb{N} = \mathbb{Z}_{+} \setminus \{ 0 \}$. We adopt the
convention that $\pm \infty + x = \pm \infty$ for any
$x \in \mathbb{R}$.
$\intcc{a,b}$, $\intoo{a,b}$,
$\intco{a,b}$, and $\intoc{a,b}$
denote closed, open and half-open, respectively,
intervals with end points $a$ and $b$,
e.g.~$\intco{0,\infty} = \mathbb{R}_{+}$.
$\intcc{a;b}$, $\intoo{a;b}$,
$\intco{a;b}$, and $\intoc{a;b}$ stand for discrete intervals,
e.g.~$\intcc{a;b} = \intcc{a,b} \cap \mathbb{Z}$,
$\intco{1;4} = \{ 1,2,3\}$, and
$\intco{0;0} = \emptyset$.
$\max M$, $\min M$, $\sup M$ and $\inf M$ denote the
maximum, the minimum, the supremum and the infimum,
respectively, of the nonempty subset
$M \subseteq \intcc{-\infty,\infty}$, and
we adopt the convention that $\sup \emptyset = 0$.

$f \colon A \rightrightarrows B$ denotes a \concept{set-valued map}
from the set $A$ into the set $B$, whereas $f \colon A \to B$ denotes
an ordinary map; see \cite{RockafellarWets09}.
The set of maps $A \to B$ is denoted $B^A$.
If $f$ is set-valued, then $f$ is \concept{strict} and
\concept{single-valued} if $f(a) \not= \emptyset$ and $f(a)$ is a
singleton, respectively, for every $a$.

We identify set-valued maps $f \colon A \rightrightarrows B$ with
binary relations on $A \times B$, i.e.,
$(a,b) \in f$ iff $b \in f(a)$.
Moreover, if $f$ is single-valued, it
is identified with an ordinary map $f \colon A \to B$.
The restriction of $f$ to a subset $M \subseteq A$ is denoted $f|_{M}$.
The inverse mapping $f^{-1} \colon B \rightrightarrows A$ is defined
by $f^{-1}(b) = \Menge{a \in A}{b \in f(a)}$, $f \circ g$ denotes
the composition of $f$ and $g$, $(f \circ g)(x) = f(g(x))$, and the
image of a subset $C \subseteq A$ under $f$ is denoted
$f(C)$, $f(C) = \bigcup_{a \in C} f(a)$.

If $A$ and $B$ are metric spaces, then $f$ is
\concept{upper semi-continuous} (u.s.c.) if
$f^{-1}(\Omega)$ is closed for every
closed subset $\Omega \subseteq B$.
Alternatively, if $B = \intcc{-\infty,\infty}$, then $f$ is \concept{bounded} on the
subset $C \subseteq A$ if $f(C)$ is a bounded subset of $\mathbb{R}$.

For maps $f, g \colon X \to \intcc{-\infty,\infty}$, the relations $<$,
$\le$, $\ge$, $>$ are defined point-wise, e.g. $f < g$ if
$f(x) < g(x)$ for all $x \in X$. Analogously, the relations are
interpreted component-wise for elements
of~$\intcc{-\infty,\infty}^n$.
The set of minimum points of $f$ in some subset $ Q\subseteq X$ is
denoted $\argmin\Menge{f(x)}{x \in Q}$.
$\hypo f = \Menge{(x,\gamma) \in X \times \mathbb{R}}{\gamma \le f(x)}$
is the \concept{hypograph} of $f$, and $f$ is u.s.c. if $X$ is a metric space
and $\hypo f \subseteq X \times \mathbb{R}$ is closed
\cite{RockafellarWets09,HuPapageorgiou97.i}.
The \concept{backward shift operator} $\sigma$ is defined as follows.
If the map $f$ is defined on $\intco{0;T}$ for some
$T \in \mathbb{N} \cup \{\infty\}$, then $\sigma f$ is the map defined
on $\intco{0;T-1}$ and given by $(\sigma f)(t) = f(t+1)$.

\section{A Leavable Optimal Control Problem}
\label{s:problem}

We develop our theory in a
rather general setting, and for now we simply assume that $X$ and $U$ are
nonempty sets.
These assumptions already allow for a fixed-point characterization of the value
function.
As we progress with our analysis we gradually impose stricter
assumptions. In particular, we demonstrate the upper semi-continuity
of the value function under assumptions including that $X$ and $U$ are
metric spaces. Here the abstract treatment of
$X$ and $U$ is crucial.
Even if the original system evolves in $\mathbb{R}^n$,
the abstractions we shall construct do not.
Similarly, to prove our main results in Section \ref{s:convergence},
we will need to construct yet another auxiliary problem with a
non-euclidean state alphabet.
Moreover,
our setting covers plants whose states naturally form
finite-dimensional manifolds, which is common in e.g.~robot dynamics
\cite{ChosetLynchHutchinsonKantorBurgardKavrakiThrun05}.

\subsection{Problem definition}
\label{ss:ProblemDef}

We seek to control \concept{systems} whose dynamics is defined by
difference inclusions of the form \ref{e:sys}. Subsequently, we often
refer to these systems as \concept{plants}.
Controllers, on the other hand, are defined by more general inclusions
of the form 
\begin{equation}
\label{e:controller}
( z(t+1), u(t), v(t) ) \in H( z(t), x(t) ),
\end{equation}
where $z$ represents the state of the controller.
The controller accepts a state signal $x$ of the plant as its input
and generates a signal $u$ that serves as input for the plant.
See \ref{fig:ClosedLoopSimpleAndMealy}.
The controller additionally generates a stopping signal $v$ which is
used to terminate the evolution of the closed loop and will be
discussed in conjunction with our definition of cost functionals.
We formalize the aforementioned concepts below.

\begin{definition}
\label{def:System}
A \concept{system} is a triple
\begin{equation}
\label{e:def:System}
(X,U,F),
\end{equation}
where $X$ and $U$ are nonempty sets and
$F \colon X \times U \rightrightarrows X$ is strict.
A pair
$(u,x) \in U^{\mathbb{Z}_{+}} \times X^{\mathbb{Z}_{+}}$
is a \concept{solution} of the system \ref{e:def:System}
if \ref{e:sys} holds for all $t \in \mathbb{Z}_{+}$.

A \concept{controller} for the system \ref{e:def:System} is a
quintuple
\begin{equation}
\label{e:def:Controller}
(Z,Z_0,\widetilde{X},\widetilde{U},H),
\end{equation}
where
$Z$, $Z_0$, $\widetilde{X}$, $\widetilde{U}$
are non-empty sets,
$Z_0 \subseteq Z$, $X \subseteq \widetilde{X}$, $\widetilde{U} \subseteq U$,
and
$H \colon Z \times \widetilde{X} \rightrightarrows Z \times \widetilde{U} \times \{0,1\}$
is strict.
A controller \ref{e:def:Controller} is \concept{static} if $Z$ is a singleton.
A quadruple
$(u,v,z,x)
\in
\widetilde{U}^{\mathbb{Z}_{+}}
\times
\{0,1\}^{\mathbb{Z}_{+}}
\times
Z^{\mathbb{Z}_{+}}
\times
\widetilde{X}^{\mathbb{Z}_{+}}
$
is a \concept{solution} of the controller \ref{e:def:Controller}
if $z(0) \in Z_0$
and \ref{e:controller} holds for all $t \in \mathbb{Z}_{+}$.
\end{definition}

We use $C \in \mathcal{F}(X,U)$ to denote the fact that $C$ is a
controller for the system \ref{e:def:System}.
The sets $X$ and $Z$ are the \concept{state alphabet},
$Z_0$ is the \concept{initial state alphabet},
$U$ and $\widetilde{X}$ are the \concept{input alphabet},
and
the maps $F$ and $H$ are the \concept{transition function}, of the
system \ref{e:def:System} and the controller \ref{e:def:Controller},
respectively.

We emphasize that our notion of controller
is equivalent to the respective notion in \cite{i14sym} in the
non-blocking case,
and it subsumes related notions from the literature, such as
\concept{causal feedback strategy}~\cite[Ch.~VIII]{BardiCapuzzoDolcetta97},
\concept{control strategy} \cite{CooganArcakBelta17},
\concept{feedback plan} \cite{LaValle06},
and \concept{policy}~\cite{BertsekasShreve96}.
Specifically, any strict policy
$\mu \colon Z \times X \rightrightarrows U \times \{0,1\}$
with
$
Z
=
\bigcup_{T \in \mathbb{Z}_{+}}
U^{\intco{0;T}} \times X^{\intco{0;T}}
$,
which generates signals $u$ and $v$ according to
\[
( u(t), v(t) )
\in
\mu( u|_{\intco{0;t}}, x|_{\intco{0;t}}, x(t) )
\]
in place of \ref{e:controller}, can be equivalently represented by a
controller with state alphabet $Z$.
On the other hand, as we shall see later, static (or \concept{memoryless})
controllers are
sufficient to approximately solve the optimal control
problems investigated in the present paper, to arbitrary accuracy.

\begin{definition}
\label{def:Behavior}
Let $S$ denote the system \ref{e:def:System} and suppose that
$C \in \mathcal{F}(X,U)$, where $C$ is of the form
\ref{e:def:Controller}.

The \concept{behavior}
$
\mathcal{B}(C \times S)
\subseteq
( U \times \{ 0, 1 \} \times X )^{\mathbb{Z}_{+}}
$
of the closed-loop composed of $C$ and $S$ is defined by the
requirement that $(u,v,x) \in \mathcal{B}(C \times S)$ iff
there exists
a signal $z \colon \mathbb{Z}_{+} \to Z$
such that
$(u,v,z,x)$
is a solution of $C$ and
$(u, x)$
is a solution of $S$.
In addition, the \concept{behavior initialized at $p \in X$} is
denoted by $\mathcal{B}_p(C \times S)$ and defined by
$
\mathcal{B}_p(C \times S)
=
\Menge{ (u,v,x) \in \mathcal{B}(C \times S)}{x(0)=p}
$.
\end{definition}
\begin{figure}[t]
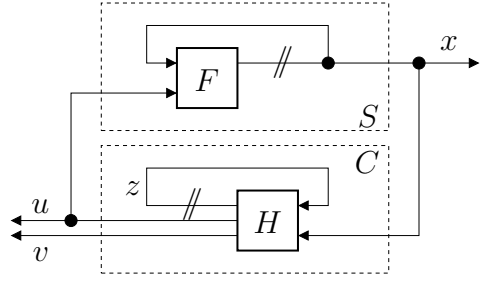

\begin{center}
\psfrag{F}[][]{$F$}
\psfrag{H}[][]{$H$}
\psfrag{x}[][]{$x$}
\psfrag{v}[][]{$v$}
\psfrag{u}[][]{$u$}
\psfrag{z}[r][r]{$z$}
\psfrag{C}[r][r]{$C$}
\psfrag{S}[r][r]{$S$}
\psfrag{//}[][]{$\sslash$}
\pgfdeclareimage[width=\ifCLASSOPTIONonecolumn.343\else.7\fi\linewidth]{ClosedLoopSimpleAndMealy}{figures/ClosedLoop.MooreMealy}%
\noindent
\pgfuseimage{ClosedLoopSimpleAndMealy}
\end{center}
\caption{\label{fig:ClosedLoopSimpleAndMealy}
Closed loop $C \times S$ according to
Definition \ref{def:Behavior}. The symbol
$\sslash$ denotes a delay.
}
\end{figure}

Our problem data also includes
a \concept{running cost function} $g$ and a
\concept{terminal cost function} $G$ as in \ref{e:costfunction}.
The total cost to be minimized is then given by the
\concept{cost functional}
$J \colon( U \times \{ 0, 1 \} \times X)^{\mathbb{Z}_+} \to \intcc{0,\infty}$,
which is defined as the sum of the terminal cost and accumulated
running costs, i.e.,
\begin{subequations}
\label{e:cfunctional}
\begin{align}
\label{e:cfunctional:a}
J(u,v,x)
&=
G(x(T))+
\sum_{t=0}^{T-1}g(x(t),x(t+1),u(t))\\
\intertext{\noindent if $v\neq 0$ and $T=\min v^{-1}(1)$, and otherwise we define $J$ by}
J(u,v,x)
&=
\infty.
\end{align}
\end{subequations}

Throughout the paper, we identify the optimal control problem with its
problem data and use the following definition.

\begin{definition}\label{d:OST}
An \concept{optimal control problem} is a tuple
\begin{IEEEeqnarray}{c}\label{e:OCP}
(X,U,F,G,g),
\end{IEEEeqnarray}
where \ref{e:def:System} is a system and $G$ and $g$ are
non-negative extended real-valued functions as
in~\ref{e:costfunction}.
\end{definition}
The notions of state alphabet, input alphabet and transition function
are carried over from the system \ref{e:def:System} to the optimal
control problem \ref{e:OCP} in the obvious way.

As already indicated, solving the problem \ref{e:OCP}
means to find controllers $C \in \mathcal{F}(X,U)$
which, for every state $p \in X$, minimize or approximately minimize
the cost \ref{e:cfunctional} for
$(u,v,x) \in \mathcal{B}_p(C\times S)$ in a worst-case sense, where
$S$ denotes the plant \ref{e:def:System}.
Here, the stopping signal $v \colon \mathbb{Z}_+ \to \{ 0, 1 \}$
determines, by its first $0$-$1$ edge, the instance of time when the
optimization process stops and the terminal costs are evaluated, and
the worst-case cost is given by the
\concept{closed-loop value function}
$L \colon X \to \intcc{0,\infty}$
of \ref{e:OCP} associated with $C$,
\begin{IEEEeqnarray}{c}
\label{e:ClosedLoopValueFunction:L}
L(p)
=
\sup_{(u,v,x)\in \mathcal{B}_{p}( C \times S )}
J(u,v,x).
\end{IEEEeqnarray}
It follows that the achievable closed-loop performance is determined
by the \concept{value function}
$V \colon X \to \intcc{0,\infty}$
of \ref{e:OCP},
\begin{equation}
\label{e:TimeDiscreteValueFunction:V}
V(p) =
\inf_{C \in \mathcal{F}(X,U)}
\sup_{(u,v,x) \in \mathcal{B}_{p}(C \times S)}
J(u,v,x).
\end{equation}
As we show in Theorem~\ref{th:OptimalityPrinciple}, the value
function satisfies 
\begin{equation}
\label{th:OptimalityPrinciples:e:1}
V(p)
=
\sup_{\beta \in \Delta(p)}
\;
\inf_{u \in U^{\mathbb{Z}_{+}}}
\;
\inf_{v \in \{ 0, 1 \}^{\mathbb{Z}_{+}}}
J(u,v,\beta(u))
\end{equation}
for all $p \in X$, where
$\Delta(p)$ is the set of all strictly causal maps
$
\beta \colon U^{\mathbb{Z}_{+}}
\to
X^{\mathbb{Z}_{+}}
$
for which
$(u,\beta(u))$
is a solution of $S$
satisfying $\beta(u)(0) = p$,
for every $u \in U^{\mathbb{Z}_{+}}$.
Here, $\beta$ is \concept{causal} (resp., \concept{strictly causal}) if
$\beta(u)|_{\intcc{0;T}} = \beta(\tilde u)|_{\intcc{0;T}}$ whenever
$u, \tilde u \in U^{\mathbb{Z}_{+}}$,
$T \in \mathbb{Z}_{+}$
and
$u|_{\intcc{0;T}} = \tilde u|_{\intcc{0;T}}$ (resp.,
$u|_{\intco{0;T}} = \tilde u|_{\intco{0;T}}$).
Thus, in terms of performance, our concept of controller is equivalent
to \concept{non-anticipating strategies}~\cite{GrueneJunge08}.

\subsection{Important Special Cases}
\label{ss:SpecialCases}

We briefly discuss special cases of the class of optimal
control problems considered in this paper. For further examples,
including an entry-(or exit-)time problem and a problem whose
underlying dynamics is chaotic, see Section \ref{s:Example}.

\begin{example}[Shortest Path Problem]
\label{ex:ShortestPaths}
Given a directed graph, we wish to find shortest paths from a specified source vertex to
all other vertices \cite{AhujaMagnantiOrlin93}. This problem and its
generalizations have numerous applications
\cite{Knuth77,AhujaMagnantiOrlin93,GalloLongoPallotinoNguyen93,GrueneJunge08}.
For a formal description,
let $X$ and $A \subseteq X \times X$ be finite sets of
vertices and of arcs, respectively, of a directed graph, and let
$s \in X$ be a distinguished source vertex. Let a non-negative
length $w_{p,q}$ be associated with each arc $(p,q)$, i.e.,
$w \colon A \to \mathbb{R}_{+}$, and define the length of a path as
the sum of the lengths of its arcs.
The distance from $s$ to $p \in X$, denoted $d(p)$, is the minimum
length of any (directed) path from $s$ to $p$, and is defined to be
$\infty$ if no such path exists. The problem is to determine $d(p)$,
and a path of length $d(p)$ from $s$ to $p$ if $d(p) < \infty$, for
all $p \in X$.

The problem can be reduced to the following instance of the optimal control
problem \ref{e:OCP}.
Define
$U = X$,
$G(s) = 0$, and
$G(p) = \infty$ for all $p \in X \setminus \{ s \}$,
let $g$ be such that $g(p,q,u) = w_{q,p}$ whenever $(q,p) \in A$, and
let $F$ be single-valued such that
$F(p,U) = \{ p \} \cup \Menge{ q \in X}{ (q,p) \in A }$.
Then there exists a static controller $C$
for the system $S$ in \ref{e:def:System},
with single-valued
transition
function,
such that the closed-loop value
function of \ref{e:OCP} associated with $C$ equals the value function
$V$ of \ref{e:OCP}; see e.g.~Section \ref{ss:AlgorithmicSolution:TheAbstractController}.
In turn, as is easily seen, a shortest path from $s$ to $p$ can be
obtained from the unique element of $\mathcal{B}_p(C \times S)$, and
$d = V$.
\end{example}

\begin{example}[Reach-Avoid Problem]
\label{ex:ReachAvoid}
The problem of steering the state of the plant into a target set while
avoiding obstacles appears in many applications,
e.g.~\cite{Isaacs65,ChosetLynchHutchinsonKantorBurgardKavrakiThrun05}. For
a formal description, let $S$ be a plant of the form
\ref{e:def:System}, and let a target set $D \subseteq X$ and an
obstacle set $M \subseteq X$ be given. The controller
$C \in \mathcal{F}(X,U)$ is successful for the state $p \in X$ if
for every $(u,v,x) \in \mathcal{B}_p(C \times S)$ there exists some
$s \in \mathbb{Z}_{+}$ satisfying $x(s) \in D$ and
$x(t) \not\in M$ for all $t \in \intcc{0;s}$. We say that $p$ can be
forced into the target set if there exists a controller that is
successful for $p$. The problem is to determine the subset
$E \subseteq X$ of states that can be forced into the target set, and
a controller that is successful for all states in $E$.

The problem can be reduced to the following instance of the optimal
control problem \ref{e:OCP}.
Define $G(p) = 0$ if $p \in D \setminus M$,
and otherwise $G(p) = \infty$, and define
$g(p,q,u) = 0$ if $p \not\in M$,
and otherwise
$g(p,q,u) = \infty$.
Then $E$ equals the effective domain
$V^{-1}(\mathbb{R_+})$ of the value function $V$ of \ref{e:OCP}, and a
controller $C$ is successful for all states in $E$ iff the closed-loop
value function of \ref{e:OCP} associated with $C$
equals $V$.

The problem can be approximately solved using the results in this
paper, which, for each compact subset $K \subseteq E$, yield a
static controller $C$ that is successful for all states in $K$.
See Corollary \ref{cor:th:QualitativeCompleteness}.
\end{example}

\begin{example}[Minimum Time Problem]
\label{ex:MinTime}
Various practical problems require solving reach-avoid problems in
minimum time,
e.g.~\cite{BrysonHo75,CardaliaguetQuincampoixSaintPierre99,ChosetLynchHutchinsonKantorBurgardKavrakiThrun05}. For
a formal description, let $S$, $D$ and $M$ as in Example
\ref{ex:ReachAvoid}, and define the \concept{entry time $T_C(p)$ from
  $p \in X$ under feedback $C \in \mathcal{F}(X,U)$} as the infimum of
all $\tau \in \mathbb{Z}_{+}$ satisfying the following condition:
For every $(u,v,x) \in \mathcal{B}_p(C \times S)$ there exists some
$s \in \intcc{0;\tau}$ such that $x(s) \in D$ and
$x(t) \not\in M$ for all $t \in \intcc{0;s}$. The
\concept{entry time $T(p)$ from $p \in X$} is the infimum of $T_C(p)$
over all controllers $C \in \mathcal{F}(X,U)$. The problem is to
determine the value $T(p)$ for all $p \in X$, and a controller
$C \in \mathcal{F}(X,U)$ satisfying $T = T_C$.

The problem can be reduced to the following instance of the optimal
control problem \ref{e:OCP}.
Define $G(p) = 0$ if $p \in D \setminus M$,
and otherwise $G(p) = \infty$, and define
$g(p,q,u) = 1$ if $p \not\in M$,
and otherwise
$g(p,q,u) = \infty$.
Then the \concept{minimum time function} $T$
equals the value function $V$ of \ref{e:OCP}, and for every
controller $C \in \mathcal{F}(X,U)$, $T_C$ equals the closed-loop
value function of \ref{e:OCP} associated with $C$.

The problem can be approximately solved using the results in this
paper, which, for each compact subset $K \subseteq X$,
yield a static controller $C$ satisfying
$\sup T_C(K) = \sup T(K)$.
See Corollary \ref{cor:th:QualitativeCompleteness}.
\end{example}

\section{Fixed-point characterization and\\regularity of the value function}
\label{s:OptimalityPrinciple}

In this section, we shall first characterize the value function
\ref{e:TimeDiscreteValueFunction:V} as the maximal fixed-point of the
\concept{dynamic programming operator} $P$ associated with the optimal
control problem \ref{e:OCP},
\begin{equation}
\label{e:TimeDiscreteDPOperator:P}
P(W)(p)
=
\min
\left\{
G(p),
\inf_{u \in U}
\sup_{q \in F(p,u)}
g(p,q,u) + W(q)
\right\},
\end{equation}
which maps the space of functions
$X \to \intcc{0,\infty}$ into itself. This
characterization will in turn permit us to represent the value
function as the limit of repeated applications of $P$ to the terminal
cost function and to prove that this limit is semi-continuous.
These results are new, see our discussion at the end of this
section. Moreover, they will be useful later, when they facilitate the
comparison of value functions in Section
\ref{s:ComparisonRefinementAbstraction} as well
as our
\label{review:item6:text:a}
convergence proofs in Section \ref{s:convergence}.
In addition, as a side product we obtain the identity
\ref{th:OptimalityPrinciples:e:1}, which shows that in our setting,
the value function could equivalently be defined using alternative
information patterns, e.g.~\cite{GrueneJunge08}.

\begin{theorem}
\label{th:OptimalityPrinciple}
Let \ref{e:OCP} be an optimal control problem, and let $V$ and $P$ be
the associated value function and dynamic programming operator as
defined in \ref{e:TimeDiscreteValueFunction:V} and
\ref{e:TimeDiscreteDPOperator:P}, respectively.
Then $V$ is the maximal fixed point of $P$, i.e.,
$P(V) = V$, and
$W \le P(W)$ implies $W \le V$.
Moreover, the identity \ref{th:OptimalityPrinciples:e:1} holds for all
$p \in X$.
\end{theorem}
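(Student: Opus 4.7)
My plan is to prove all three claims of the theorem in an interlocked fashion. Write $\tilde V(p)$ for the right-hand side of \ref{th:OptimalityPrinciples:e:1}, and establish the three sandwich facts: (a)~$V \le P(V)$; (b)~$\tilde V \le V$; and (c)~if $W \le P(W)$ then $W \le \tilde V$. Granted these, (c) applied to $W = V$ combines with (a) to give $V \le \tilde V$, which together with (b) yields $V = \tilde V$ and hence identity \ref{th:OptimalityPrinciples:e:1}; (c) itself is then the asserted maximality; and since $P$ is obviously monotone in its argument, (a) gives $P(V) \le P(P(V))$, so $P(V)$ is a subfixed point and therefore $P(V) \le V$ by the maximality just established, which combined with (a) yields $P(V) = V$.

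For (a), fix $p \in X$ and $\epsilon > 0$. The controller that sets $v(0) = 1$ realizes cost $G(p)$, so $V(p) \le G(p)$. Given $u \in U$, use the axiom of choice to select for each $q \in F(p,u)$ an $\epsilon$-suboptimal controller $C_q \in \mathcal F(X,U)$, and compose them into a dynamic controller that plays $(u,0)$ at time $0$ and from time $1$ switches to $C_{x(1)}$ according to the observed next state; its worst-case cost from $p$ is at most $\sup_{q \in F(p,u)} g(p,q,u) + V(q) + \epsilon$. Taking $\inf_u$ and combining the two bounds gives $V(p) \le P(V)(p) + \epsilon$, and $\epsilon \to 0$ concludes. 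For (b), given $C \in \mathcal F(X,U)$ and $\beta \in \Delta(p)$, I would run the coupled recursion $x(0) = p$, $(z(t+1), u(t), v(t)) \in H(z(t), x(t))$, $x(t+1) = \beta(u)(t+1)$, which is well-posed thanks to the strict causality of $\beta$; an easy induction shows $x = \beta(u)$, so $(u,v,x) \in \mathcal B_p(C \times S)$ and hence $\inf_{u,v} J(u,v,\beta(u)) \le \sup_{\mathcal B_p(C \times S)} J$. Taking $\sup_\beta$ and $\inf_C$ yields $\tilde V \le V$.

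Step (c) is the crux and the principal obstacle. Fix $\epsilon > 0$ and a summable sequence $\epsilon_t > 0$ with $\sum_t \epsilon_t \le \epsilon$. From $W \le P(W)$, for every $(y,w,t) \in X \times U \times \mathbb Z_+$ the set $F(y,w)$ contains (by AC) a point $\hat F_t(y,w)$ with $g(y, \hat F_t(y,w), w) + W(\hat F_t(y,w)) \ge W(y) - \epsilon_t$. Define $\beta(u)(0) = p$ and $\beta(u)(t+1) = \hat F_t(\beta(u)(t), u(t))$; strict causality is immediate, and $\beta \in \Delta(p)$. For any $u$ and $v$ with $T = \min v^{-1}(1) < \infty$, writing $x = \beta(u)$, a backwards induction on $t$ yields
\begin{equation*}
W(x(t)) \le \sum_{s=t}^{T-1} g(x(s), x(s+1), u(s)) + W(x(T)) + \sum_{s=t}^{T-1} \epsilon_s,
\end{equation*}
and specializing to $t = 0$, together with $W(x(T)) \le G(x(T))$ (also from $W \le P(W)$), gives $W(p) \le J(u,v,\beta(u)) + \epsilon$; the case $v \equiv 0$ is trivial since then $J = \infty$. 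Passing to $\inf_{u,v}$, $\sup_\beta$, and $\epsilon \to 0$ gives $W \le \tilde V$. The main difficulty is exactly the distribution of the slack $\epsilon_t$ across time so that the telescoping DP estimate survives for arbitrarily long but finite stopping horizons; everything else reduces to routine bookkeeping and uses of the axiom of choice.
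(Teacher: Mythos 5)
Your decomposition --- (a) $V \le P(V)$ via stopping immediately plus one-step composition of $\varepsilon$-suboptimal controllers indexed by the observed successor state, (b) $\tilde V \le V$ via coupling an arbitrary controller with an arbitrary strictly causal $\beta$, (c) $W \le P(W) \Rightarrow W \le \tilde V$ via an adversarial selection with slack distributed summably over time, followed by the bookkeeping that extracts $P(V)=V$ and maximality --- is exactly the architecture of the paper's proof, and your steps (a) and (b) are sound as written.

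Step (c), however, has a genuine gap at precisely the values this paper cares about, namely $W(y)=\infty$. Your selection requires a point $q \in F(y,w)$ with $g(y,q,w)+W(q) \ge W(y)-\epsilon_t$; when $W(y)=\infty$ this demands $g(y,q,w)+W(q)=\infty$ for some $q$, which need not exist even though $W \le P(W)$ guarantees $\sup_{q\in F(y,w)}\bigl(g(y,q,w)+W(q)\bigr)=\infty$. Concretely, take $U$ a singleton, $X=\{*\}\cup\mathbb{N}$, $F(*,u)=\mathbb{N}$, $F(n,u)=\{n\}$, $g\equiv 0$, $G(*)=\infty$, $G(n)=n$, and $W=V$; then $W(*)=\infty$ but $g+W$ is finite at every successor, so no admissible $\hat F_t(*,u)$ exists, and for any choice $\hat F_t(*,u)=n_0$ your claimed estimate $W(p)\le J(u,v,\beta(u))+\epsilon$ fails for the single constructed $\beta$ (the infimum of $J$ along it is the finite number $n_0$). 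Passing to $\sup_\beta$ afterwards does not repair this, because your inequality is asserted per fixed $\beta$. Since in this paper the cost functions and hence $W$ and $V$ are genuinely $\intcc{0,\infty}$-valued --- infinite values encode the hard constraints in the reach-avoid and minimum-time instances --- this is not an ignorable corner case. The repair is the device the paper uses: argue by contradiction from $R(p)+2\varepsilon<W(p)$, which forces the threshold $R(p)+\varepsilon$ to be finite, and carry that finite threshold (rather than $W$ itself) through the forward induction, at each step choosing $q$ so that $g+W(q)$ exceeds a finite target, which is always possible. Equivalently, you may first truncate, observing that $W\le P(W)$ implies $\min\{W,N\}\le P(\min\{W,N\})$, run your telescoping argument for the bounded function $\min\{W,N\}$, and let $N\to\infty$. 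With either patch your proof coincides with the paper's.
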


\begin{proof}
We first observe that
$P$ is monotone, i.e., that $P(W) \le P(W')$ whenever
$W \le W'$, and that
\begin{alignat*}{3}
\negthinspace
J(u,v,x)
&=
G(x(0))
&\;&
\text{if $v(0) = 1$,}\\
\negthinspace
J(u,v,x)
&=
g(x(0),x(1),u(0)) + J(\sigma u, \sigma v, \sigma x),
&&
\text{otherwise,}
\end{alignat*}
for all
$(u,v,x) \in ( U \times \{ 0, 1 \} \times X )^{\mathbb{Z}_{+}}$.
Using a controller whose
transition
function maps into
$Z \times U \times \{1\}$, for some $Z$,
we see that $V \le G$.

In what follows, we shall denote by $R(p)$ the right hand side of
\ref{th:OptimalityPrinciples:e:1} to show that
$R \le V \le P(V)$ and that $W \le P(W)$ implies $W \le R$, which
proves the theorem. In particular, the case $W = P(V)$ shows that $P(V) \le V$.

To prove that $R \le V$ holds, assume that $V(p) < R(p)$ for some
$p \in X$. Then there exists a controller $C$ of the form
\ref{e:def:Controller} and a map $\beta \in \Delta(p)$ satisfying
$J(u,v,x) < J(u,v,\beta(u))$ for every
$(u,v,x) \in \mathcal{B}_p(C \times S)$,
where $S$ denotes the system \ref{e:def:System}.
We will inductively construct
$u$ and $v$ such that 
$(u,v,\beta(u)) \in \mathcal{B}_p(C \times S)$, which is a
contradiction and so proves $R \le V$.
To this end, consider the following condition for any
$T \in \mathbb{Z}_+$:
The signals $u$ and $v$ have already been defined on
$\intco{0;T}$, and the signal $z$ has already been defined on
$\intcc{0;T}$ such that
\ref{e:controller} with $\beta(u)$ in place of $x$
holds for all $t \in \intco{0;T}$. Here, $\beta(u)(t)$ denotes
$\beta(\tilde u)(t)$ for any extension
$\tilde u \colon \mathbb{Z}_+ \to U$ of $u$, which is an unambiguous
abbreviation as $\beta$ is causal.
Pick any $z(0) \in Z_0$ to satisfy the condition for $T = 0$, and
assume the condition holds for some $T \in \mathbb{Z}_+$. To extend
the signals $u$, $v$, and $z$ we
pick
$( z(T+1), u(T), v(T) )
\in
H( z(T), \beta(u)(T) )
$,
which is feasible as $\beta$ is strictly causal.
Then the condition holds with $T+1$ in
place of $T$ as $\beta$ is causal. Consequently, there exist signals
$u$, $v$ and $z$ defined on $\mathbb{Z}_+$ such that
$(u,v,z,\beta(u))$
is a solution of $C$, and so
$(u,v,\beta(u)) \in \mathcal{B}_p(C \times S)$ as
$\beta \in \Delta(p)$.

To prove that $V \le P(V)$ holds, it suffices to show that
\begin{equation}
\label{th:OptimalityPrinciples:proof:e:1}
V(p)
\le
\sup_{q \in F(p,\xi)}
g(p,q,\xi) + V(q)
\end{equation}
for all $p \in X$ and all $\xi \in U$.
To this end, let $p \in X$, $\xi \in U$ and $\varepsilon > 0$.
For every $q \in F(p,\xi)$ there is a controller
$C_q \in \mathcal{F}(X,U)$ such that
\begin{equation}
\label{th:OptimalityPrinciples:proof:e:2}
\sup_{(u,v,x) \in \mathcal{B}_{q}(C_q \times S)}
J(u,v,x)
\le
V(q) + \varepsilon.
\end{equation}
We may assume without loss of generality that $C_q$ is of the form
$
C_q
=
(Z_q,\{ z_{q,0} \}, X, U, H_q),
$
in which the state alphabets $Z_q$
are pairwise disjoint.
Let $z_0,z_1 \not\in Z_q$ for every $q$, $z_0 \not= z_1$, define
$Z = \{ z_0, z_1 \} \cup \bigcup_{q \in F(p,\xi)} Z_q$, and
let $\mu$ be a controller for $S$ of the form
$(Z, \{ z_0 \}, X, U, H)$
that satisfies the following conditions
for every $q \in F(p,\xi)$:
$H(z_0,p) = \{( z_1, \xi, 0 )\}$,
$H(z_1,q) = H_q(z_{q,0},q)$, and
$H(z,\cdot) = H_q(z,\cdot)$
whenever $z \in Z_q$.
One easily shows that
$(u,v,x) \in \mathcal{B}_{p}(\mu \times S)$ implies
$x(0) = p$,
$v(0) = 0$,
$u(0) = \xi$,
$x(1) \in F(p,\xi)$, and
$(\sigma u,\sigma v,\sigma x) \in \mathcal{B}_{x(1)}(C_{x(1)} \times S)$.
Using the definition of $V$,
the observation at the beginning of this proof,
and \ref{th:OptimalityPrinciples:proof:e:2}, it then follows that
$V(p)
\le
\sup_{q \in F(p,\xi)}
g(p,q,\xi) + V(q) + \varepsilon
$.
This implies \ref{th:OptimalityPrinciples:proof:e:1},
and so $V \le P(V)$.

Finally, suppose that $W \le P(W)$ and that
$R(p) + 2 \varepsilon < W(p)$ for some $p \in X$ and some
$\varepsilon > 0$. We claim that there exists a map
$\beta \in \Delta(p)$ such that
\begin{equation}
\label{th:OptimalityPrinciples:proof:e:3}
R(p)
+
(1+2^{-t}) \varepsilon
<
W( \beta(u)(t) )
+
\Sigma(\beta(u), u, t)
\end{equation}
holds for all
$t \in \mathbb{Z}_{+}$ and all $u \colon \mathbb{Z}_{+} \to U$,
where $\Sigma$ is defined by
$
\Sigma(x,u,t)
=
\sum_{\tau = 0}^{t-1} g( x(\tau), x(\tau + 1), u(\tau) ).
$
Since $W \le P(W) \le G$ it then follows that
$R(p) + \varepsilon \le J( u, v, \beta(u) )$ for all $u$ and $v$,
which contradicts the definition of $R$, and hence, shows that
$W \le P(W)$ implies $W \le R$.

To prove our claim, we define $\beta(u)(0) = p$ for every $u$, so that
the inequality \ref{th:OptimalityPrinciples:proof:e:3} for $t=0$
reduces to our assumption on $\varepsilon$. Next, we assume that for
some $t \in \mathbb{Z}_{+}$ and all $\tau \in \intcc{0;t}$,
the value of $\beta(u)|_{\intcc{0;\tau}}$ has already been defined as
a function of $u|_{\intco{0;\tau}}$ such that
\ref{th:OptimalityPrinciples:proof:e:3} holds. Then the inequality
$W \le P(W)$ implies that given $u|_{\intco{0;t+1}}$, there is
some $q \in F( \beta(u)(t), u(t) )$ such that
$
W(\beta(u)(t))
\le
2^{-(t+1)} \varepsilon
+
g( \beta(u)(t), q, u(t) )
+
W(q)
$.
Hence, the choice $\beta(u)(t+1) = q$ defines
$\beta(u)|_{\intcc{0;t+1}}$ as a function of $u|_{\intco{0;t+1}}$
such that \ref{th:OptimalityPrinciples:proof:e:3}
holds with $t + 1$ in place of $t$. This proves our claim, and
completes the proof.
\end{proof}

For our representation of the value function as the semi-continuous
limit of value iteration, i.e., of successive applications of the
dynamic programming operator $P$ to the terminal cost function $G$, we
consider the following hypothesis.

\begin{hypothesisA}
\label{h:OptimalityPrinciples:MAX}
$X$ and $U$ are metric spaces,
$F$ is compact-valued, and
$g$, $G$ and $F$ are u.s.c..
\end{hypothesisA}

\begin{corollary}
\label{cor:th:OptimalityPrinciples:MAX}
In the setting of Theorem \ref{th:OptimalityPrinciple}, additionally
assume \ref{h:OptimalityPrinciples:MAX}.
Then $V$ is u.s.c.,
\begin{equation}
\label{cor:th:OptimalityPrinciples:MAX:e:1}
V(p) = \lim_{T \to \infty}P^T(G)(p)
\end{equation}
for all $p \in X$, and $P^{T+1}(G) \le P^T(G)$ for all
$T \in \mathbb{Z}_{+}$.
\end{corollary}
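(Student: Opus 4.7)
My plan is to introduce $W_T := P^T(G)$, show this sequence is pointwise decreasing to a limit $W_\infty$, and identify $W_\infty$ with $V$ via the maximal-fixed-point characterization in Theorem \ref{th:OptimalityPrinciple}. Monotonicity is immediate: by definition, $P(W) \le G$ for every $W$, so $W_1 = P(G) \le G = W_0$, and monotonicity of $P$ (observed in the proof of Theorem \ref{th:OptimalityPrinciple}) propagates this inductively, giving the third claim $P^{T+1}(G) \le P^T(G)$. Since the $W_T$ are $[0,\infty]$-valued, the pointwise limit $W_\infty = \inf_T W_T$ exists. Applying $P^T$ to the inequality $V \le G$ (also established in the proof of Theorem \ref{th:OptimalityPrinciple}) yields $V = P^T(V) \le P^T(G) = W_T$, whence $V \le W_\infty$; this half of the argument needs no hypothesis beyond Theorem \ref{th:OptimalityPrinciple}.

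For the reverse inequality I would show that $W_\infty$ is itself a fixed point of $P$, whereupon maximality in Theorem \ref{th:OptimalityPrinciple} forces $W_\infty \le V$. The direction $P(W_\infty) \le W_\infty$ is easy: $W_\infty \le W_T$ and monotonicity of $P$ give $P(W_\infty) \le P(W_T) = W_{T+1}$, and passing $T \to \infty$ finishes. The direction $W_\infty \le P(W_\infty)$ amounts to interchanging the decreasing limit with the $\inf_u \sup_q$ in the definition of $P$, and this is exactly where Hypothesis \ref{h:OptimalityPrinciples:MAX} is used. First I would prove by induction on $T$ that every $W_T$ is u.s.c.: $W_0 = G$ is u.s.c.\ by hypothesis, and for the step I invoke the Berge-type upper semi-continuity results in the Appendix. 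Namely, for each fixed $u$ the map $p \mapsto \sup_{q \in F(p,u)}[g(p,q,u)+W_T(q)]$ is u.s.c.\ because $F(\cdot,u)$ is compact-valued and u.s.c.\ and the integrand is jointly u.s.c.; taking $\inf_u$ preserves upper semi-continuity (any pointwise infimum of u.s.c.\ functions is u.s.c.), as does the pointwise minimum with the u.s.c.\ function $G$.

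The key interchange is then a Dini-type argument. Fix $p \in X$ and write $h_T(u) := \sup_{q \in F(p,u)}[g(p,q,u)+W_T(q)]$ and $h_\infty(u)$ analogously with $W_\infty$ in place of $W_T$. For each $u$ the functions $q \mapsto g(p,q,u) + W_T(q)$ are u.s.c.\ on the compact set $F(p,u)$ and decrease pointwise in $T$, so attaining each $h_T(u)$ at some $q_T \in F(p,u)$ and extracting a convergent subsequence $q_{T_k} \to q^\ast$ yields, via u.s.c.\ of $g(p,\cdot,u)+W_{T_0}(\cdot)$ for each fixed $T_0$, the conclusion $h_T(u) \downarrow h_\infty(u)$. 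For any $\varepsilon > 0$, picking $u_\varepsilon$ with $h_\infty(u_\varepsilon) \le \inf_u h_\infty(u) + \varepsilon$ then gives $\inf_u h_T(u) \le h_T(u_\varepsilon) \to h_\infty(u_\varepsilon) \le \inf_u h_\infty(u) + \varepsilon$. Letting $\varepsilon \to 0$ and using continuity of $a \mapsto \min\{G(p),a\}$ delivers $W_\infty(p) = \lim_T P(W_T)(p) \le P(W_\infty)(p)$, closing the fixed-point argument and establishing \ref{cor:th:OptimalityPrinciples:MAX:e:1}. The u.s.c.\ claim for $V$ is then immediate, since $V = \inf_T W_T$ is a pointwise infimum of u.s.c.\ functions. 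The main obstacle I anticipate is precisely the $\lim$--$\inf_u$ interchange: because $U$ is not assumed compact, one cannot pass to the limit at a minimizer, so combining one-sided Dini on the compact set $F(p,u)$ with an $\varepsilon$-argument over $u$ appears essential.
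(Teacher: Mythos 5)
Your proposal is correct and follows essentially the same route as the paper: monotonicity of $P$ gives the decreasing iterates, Berge's Maximum Theorem plus closure of u.s.c.\ under pointwise infima gives upper semi-continuity of each $P^T(G)$ and of the limit, and the identity $V_\infty = V$ is obtained by showing $V_\infty \le P(V_\infty)$ and invoking the maximal-fixed-point property from Theorem \ref{th:OptimalityPrinciple}. The only difference is presentational: your Dini-type extraction of maximizers on the compact set $F(p,u)$ reproves the paper's Proposition \ref{prop:usc_stability_of_monotone_convergence} inline, and your $\varepsilon$-argument for the $\lim$--$\inf_u$ interchange is what the paper disposes of with the remark that a decreasing limit is an infimum and infima commute.
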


\begin{proof}
Obviously, $0 \le P(W) \le G$ for every
$W \colon X \to \intcc{0, \infty }$, and $P$ is
monotone. This proves the monotonicity claim and shows that
the limit on the right hand
side of \ref{cor:th:OptimalityPrinciples:MAX:e:1}, which we will
denote by $V_{\infty}(p)$, exists in $\intcc{0,\infty}$. In addition,
the inequality $V \le G$ implies $V \le P^T(G)$ for all
$T \in \mathbb{Z}_{+}$, hence $V \le V_{\infty}$.
Next, using Berge's Maximum Theorem \ref{th:BergesMaximumTheorem} and the
fact that the infimum of u.s.c. maps is u.s.c., we see that $P(W)$ is
u.s.c. if $W$ is.
Thus, $P^T(G)$ is u.s.c.~for
every $T \in \mathbb{Z}_{+}$. Then $V_{\infty}$ is u.s.c. as it is the infimum of
u.s.c.~maps.

It remains to show that $V_{\infty} \le P(V_{\infty})$. Then Theorem
\ref{th:OptimalityPrinciple} implies that $V_{\infty} \le V$, and so
$V_{\infty} = V$. Indeed, as $P^T(G)$ is monotonically decreasing
in $T$, we may apply Proposition
\ref{prop:usc_stability_of_monotone_convergence} with
$f_k(q) \defas g(p,q,u) + P^k(G)(q)$ to see that
\[
\lim_{T \to \infty}
\sup_{q}
g(p,q,u) + P^T(G)(q)
=
\sup_{q}
g(p,q,u) + V_{\infty}(q)
\]
for arbitrary $p$ and $u$,
where the suprema are over $q \in F(p,u)$.
As the limit is an infimum, we have
$\lim_{T \to \infty} P(P^T(G))(p) = P( V_{\infty} )(p)$, which
completes the proof.
\end{proof}

We note that while Theorem \ref{th:OptimalityPrinciple} does not
assume any regularity of the problem data, the hypothesis
\ref{h:OptimalityPrinciples:MAX} mandates that e.g.~in the Reach-Avoid
Problem of Example \ref{ex:ReachAvoid} the target set and the
obstacle set is open and closed, respectively. Moreover, if any
one of the assumptions in \ref{h:OptimalityPrinciples:MAX} is dropped,
then the identity \ref{cor:th:OptimalityPrinciples:MAX:e:1} fails to
hold, in general. Also our assumptions of semi-continuity and
compact-valuedness in \ref{h:OptimalityPrinciples:MAX} are automatically satisfied
if the state and input alphabets are finite.

We would like to emphasize that while fixed-point characterizations
and value iteration methods are well known in the field of Dynamic
Programming,
e.g.~\cite{BertsekasShreve96,Bertsekas15,Bertsekas13,DubinsSavage14,MaitraSudderth96},
the available results do not apply in our
setting. Specifically, the theory in \cite{BertsekasShreve96} requires
that cost functionals are represented as limits of finite horizon
costs, which is impossible for the functional in
\ref{e:cfunctional}. The hypotheses in \cite{Bertsekas15} imply that
the dynamic programming operator has a unique fixed-point, and so are
not satisfied by e.g.~the Reach-Avoid Problem of Example
\ref{ex:ReachAvoid} whenever the transition function $F$ is
single-valued and there exists a state that cannot be forced into the
target set.
Similarly, for the unconstrained Minimum Time Problem of
Example \ref{ex:MinTime}, the hypotheses in \cite{Bertsekas13} imply
that the entry time is finite for every state
\cite[Sect.~3.2.1]{Bertsekas13}, or alternatively, that there exists a
uniform bound on all finite entry times
\cite[Sect.~3.2.2]{Bertsekas13}. These assumptions are typically not
satisfied if the state alphabet of the plant is infinite, and are not
imposed in the present paper.
Results on stochastic games,
e.g.~\cite{DubinsSavage14,MaitraSudderth96}, can be directly
interpreted in our setting only if the transition function of the
plant is single-valued. In addition, running costs are typically
assumed to vanish and terminal costs are required to be
real-valued. Moreover, the class of controllers is also restricted,
which can be seen from the result
\cite[Ch.~2.9, Th.~1]{DubinsSavage14} which does not hold in our
setting:
If the state alphabet of the plant is finite and the controller
eventually stops every solution of the closed-loop, then the stopping
times are uniformly bounded.

\section{Comparison of Closed-Loop Performances}
\label{s:ComparisonRefinementAbstraction}

In this section, we introduce
\concept{valuated alternating simulation relations} and
\concept{valuated feedback refinement relations} between optimal
control problems, which are novel, quantitative variants of known
qualitative system relations. As we shall show, the former
concept allows for the efficient comparison of value functions of
related optimal control problems, while the latter guarantees that the
concrete closed-loop value function is upper-bounded, in a
well-defined sense, by the abstract closed-loop value function.
These results will be needed in the proofs of our
main results in Section \ref{s:convergence}.

\subsection{Comparison of value functions}

\begin{definition}
\label{def:valuatedAlternatingSimulationRelation}
Consider optimal control problems
\begin{equation}
\label{e:two:ocp}
\Pi_i = (X_i, U_i, F_i, G_i, g_i ),
\end{equation}
and denote the dynamic programming operator associated with the
problem $\Pi_i$ by $P_i$, $i \in \{1,2\}$.
The relation $Q \colon X_1 \rightrightarrows X_2$ is a
\concept{valuated alternating simulation relation}
from $\Pi_1$ to $\Pi_2$, denoted by
$\Pi_1 \preccurlyeq_Q^\circ \Pi_2$,
if the following conditions hold for all $(p_1,p_2) \in Q$ and all
$u_2 \in U_2$:
\begin{enumerate}
\item
\label{def:valuatedAlternatingSimulationRelation:terminalCost}
$G_1(p_1) \le G_2(p_2)$;
\item
\label{def:valuatedAlternatingSimulationRelation:dyn}
if $G_1(p_1) > 0$ and
the maps $g_2( p_2, \cdot, u_2)$ and $(P_1(0)) \circ Q^{-1}$ are
bounded on the set $F_2(p_2,u_2)$,
where $0$ denotes the zero function on $X_1$,
then for all $\varepsilon > 0$ we have:
\begin{equation}
\label{e:def:valuatedAlternatingSimulationRelation:dyn}
\begin{split}
\exists_{u_1 \in U_1}
\forall_{q_1 \in F_1(p_1,u_1)}
\exists_{q_2 \in F_2(p_2,u_2) \cap Q(q_1)}
\hspace{3em}
\\
g_1(p_1,q_1,u_1) \le \varepsilon + g_2(p_2,q_2,u_2)
.
\end{split}
\end{equation}
\end{enumerate}
\end{definition}

The notion of valuated alternating simulation relation is
related to its well-known qualitative variant in \cite{Tabuada09}
as well as to the quantitative variants employed in
\cite{MazoTabuada10b,deRooMazo13,LeongPrabhakar16}. The concepts in
\cite{Tabuada09,MazoTabuada10b,deRooMazo13,LeongPrabhakar16} require
that the first line of condition
\ref{e:def:valuatedAlternatingSimulationRelation:dyn} holds for all
$(p_1, p_2) \in Q$ and all $u_2 \in U_2$, which implies, roughly
speaking, behavioral inclusion between the two dynamical systems
underlying the optimal control problems $\Pi_1$ and $\Pi_2$.
It is the weaker conditions imposed in Definition
\ref{def:valuatedAlternatingSimulationRelation} that facilitate the
application of valuated alternating simulation relations in our
convergence proof in Section \ref{ss:convergence:convergence}, where behavioral
inclusion cannot be presumed. Comparison of the value functions
associated with two related optimal control problems is still possible
using our fixed-point characterization in Theorem
\ref{th:OptimalityPrinciple}:

\begin{theorem}
\label{th:vASR_Bound_On_V}
Let $\Pi_1$ and $\Pi_2$ be two optimal control problems
with value functions $V_1$ and $V_2$, respectively.
If $\Pi_1 \preccurlyeq_Q^\circ \Pi_2$, then $V_1(p_1) \le V_2(p_2)$
for every $(p_1, p_2) \in Q$.
\end{theorem}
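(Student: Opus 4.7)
The plan is to build an auxiliary function on the state alphabet $X_2$ and apply the maximal-fixed-point characterization from Theorem \ref{th:OptimalityPrinciple} to the operator $P_2$. Concretely, I would define
\[
W_2(p_2) = \sup\Menge{V_1(p_1)}{p_1 \in Q^{-1}(p_2)}
\]
(using the convention $\sup\emptyset = 0$), and aim to verify $W_2 \le P_2(W_2)$. Theorem \ref{th:OptimalityPrinciple} would then yield $W_2 \le V_2$, and for any $(p_1,p_2)\in Q$ the bound $V_1(p_1) \le W_2(p_2) \le V_2(p_2)$ follows directly. Thus the entire argument reduces to establishing, for fixed $p_2 \in X_2$ and $u_2 \in U_2$, the two inequalities $W_2(p_2) \le G_2(p_2)$ and $W_2(p_2) \le \sup_{q_2 \in F_2(p_2,u_2)} g_2(p_2,q_2,u_2) + W_2(q_2)$.

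The first inequality is immediate from condition \ref{def:valuatedAlternatingSimulationRelation:terminalCost} of Definition \ref{def:valuatedAlternatingSimulationRelation}, since for every $p_1 \in Q^{-1}(p_2)$ one has $V_1(p_1) = P_1(V_1)(p_1) \le G_1(p_1) \le G_2(p_2)$. For the second inequality I fix $p_1 \in Q^{-1}(p_2)$ and split into cases. If $G_1(p_1) = 0$, then $V_1(p_1) = 0$ and there is nothing to show. Otherwise I distinguish according to whether the boundedness hypothesis in condition \ref{def:valuatedAlternatingSimulationRelation:dyn} holds. If $g_2(p_2,\cdot,u_2)$ is unbounded on $F_2(p_2,u_2)$ the right-hand side is already $\infty$; if $(P_1(0))\circ Q^{-1}$ is unbounded on $F_2(p_2,u_2)$, I use monotonicity of $P_1$ together with $0 \le V_1$ to conclude $P_1(0) \le V_1$, which forces $W_2$ to be unbounded along a matching sequence in $F_2(p_2,u_2)$ and so again makes the right-hand side equal to $\infty$. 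In the remaining case, the full simulation condition \ref{e:def:valuatedAlternatingSimulationRelation:dyn} applies: for each $\varepsilon > 0$ I pick the $u_1$ it provides and, for every $q_1 \in F_1(p_1,u_1)$, a companion $q_2 \in F_2(p_2,u_2)\cap Q(q_1)$; using $V_1(q_1) \le W_2(q_2)$ (immediate from the definition of $W_2$) and the cost bound, I take suprema over $q_1$ and invoke $V_1(p_1) = P_1(V_1)(p_1)$ to obtain the desired inequality up to $\varepsilon$, then let $\varepsilon \to 0$. Taking the supremum over $p_1 \in Q^{-1}(p_2)$ and then infimum over $u_2$ completes the verification of $W_2 \le P_2(W_2)$.

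The main obstacle is the careful handling of the boundedness caveat in Definition \ref{def:valuatedAlternatingSimulationRelation}, where condition \ref{e:def:valuatedAlternatingSimulationRelation:dyn} is only guaranteed when the two relevant maps are bounded on $F_2(p_2,u_2)$; the argument that failure of either bound renders the right-hand side of the $P_2$-inequality infinite is what makes the weak hypothesis of the definition usable, and it is also the step that draws essentially on the fixed-point characterization (through the monotonicity bound $P_1(0) \le V_1$) rather than on any notion of behavioural inclusion.
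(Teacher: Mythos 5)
Your proposal is correct and follows essentially the same route as the paper's proof: the same auxiliary function $W_2(p_2)=\sup V_1(Q^{-1}(p_2))$, the same double application of Theorem \ref{th:OptimalityPrinciple} (via $V_1=P_1(V_1)$ and via $W_2\le P_2(W_2)\Rightarrow W_2\le V_2$), and the same treatment of the boundedness caveat, which the paper states contrapositively (if the right-hand side is finite then both maps are bounded) where you argue directly that failure of either bound makes the right-hand side infinite. No gaps.
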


\begin{proof}
Suppose that $\Pi_i$ is of the form~\ref{e:two:ocp}
and let $P_i$ be the associated dynamic programming operator,
$i \in \{ 1, 2 \}$.
We claim that
$P_1(V_1)(p_1) \le P_2(W)(p_2)$ for all $(p_1, p_2) \in Q$, where
the function $W \colon X_2 \to \intcc{0,\infty}$ is defined by
\begin{equation}
\label{e:th:vASR_Bound_On_V:proof:1}
W(p_2)
= \sup \Menge{ V_1(p_1) }{ (p_1, p_2) \in Q }.
\end{equation}
Then, by applying Theorem \ref{th:OptimalityPrinciple} twice, we obtain
$W \le P_2(W)$, and in turn, $W \le V_2$, which proves the assertion.

Let $(p_1, p_2) \in Q$. Our claim is obvious if $G_1(p_1) = 0$, so we
may assume throughout that $G_1(p_1) > 0$. Moreover, from
Definition
\ref{def:valuatedAlternatingSimulationRelation}\ref{def:valuatedAlternatingSimulationRelation:terminalCost},
we see that it suffices to prove that
\begin{multline}
\label{e:th:vASR_Bound_On_V:proof:2}
\inf_{u_1 \in U_{1}}
\sup_{q_1 \in F_1(p_1,u_1)}
g_1(p_1,q_1,u_1) + V_1(q_1)
\ifCLASSOPTIONtwocolumn\\\fi
\le
\sup_{q_2 \in F_2(p_2,u_2)}
g_2(p_2,q_2,u_2) + W(q_2)
\end{multline}
holds for all $u_2 \in U_2$.

Let $u_2 \in U_2$, denote the value of the right hand side of
\ref{e:th:vASR_Bound_On_V:proof:2} by $R$, and suppose that
$R < \infty$. Then the map $g_2( p_2, \cdot, u_2)$ is bounded on the
set $F_2(p_2,u_2)$. The same holds for the map $(P_1(0)) \circ Q^{-1}$
as $V_1 = P_1(V_1) \ge P_1(0)$. Thus, we may assume that
\ref{e:def:valuatedAlternatingSimulationRelation:dyn} holds.
Moreover, the estimate \ref{e:th:vASR_Bound_On_V:proof:2} holds
if for all $\varepsilon > 0$ there exists $u_1 \in U_1$ such that
$
\sup_{q_1 \in F_1(p_1,u_1)}
g_1(p_1,q_1,u_1) + V_1(q_1)
\le
\varepsilon
+
R
$.
This, in turn, is guaranteed if for all $q_1 \in F_1(p_1,u_1)$ there
exists $q_2 \in F_2(p_2,u_2)$ satisfying
$
g_1(p_1,q_1,u_1) + V_1(q_1)
\le
\varepsilon + g_2(p_2,q_2,u_2) + W(q_2)
$, and so an application of
\ref{e:def:valuatedAlternatingSimulationRelation:dyn} and
\ref{e:th:vASR_Bound_On_V:proof:1} completes the proof.
\end{proof}

\subsection{Controller refinement and comparison of closed-loop value
functions}

We have just seen that the existence of a valuated alternating
simulation relation between optimal control problems implies a
comparison between the respective value functions. We now proceed to
introduce the stronger notion of valuated feedback refinement relation
to additionally facilitate the refinement of solutions of one of
the two problems, to the other problem, which is needed in
the proof of one of our main results in Section \ref{s:convergence}.

\begin{definition}
\label{def:FeedbackRefinementRelation}
Consider two optimal control problems $\Pi_1$ and $\Pi_2$ of the form
\ref{e:two:ocp}.
The relation $Q \colon X_1 \rightrightarrows X_2$ is a
\concept{valuated feedback refinement relation}
from $\Pi_1$ to $\Pi_2$, denoted
$\Pi_1 \preccurlyeq_Q \Pi_2$,
if $Q$ is strict and the following conditions
hold for all $(p_1,p_2),(q_1, q_2) \in Q$ and all $u \in U_2$:
\begin{enumerate}
\item
\label{def:FeedbackRefinementRelation:in}
$U_2 \subseteq U_1$;
\item
\label{def:FeedbackRefinementRelation:terminalCost}
$G_1(p_1) \le G_2(p_2)$;
\item
\label{def:FeedbackRefinementRelation:runningCost}
$g_1(p_1,q_1,u) \le g_2(p_2,q_2,u)$;
\item
\label{def:FeedbackRefinementRelation:dyn}
$Q(F_1(p_1,u)) \subseteq F_2(p_2, u)$.
\qedhere
\end{enumerate}
\end{definition}

We first note that every valuated feedback refinement
relation is also a valuated alternating simulation relation. We state
this simple fact as a formal result for later reference:

\begin{proposition}
\label{prop:vFRR_is_also_sASR}
$\Pi_1 \preccurlyeq_Q \Pi_2$
implies
$\Pi_1 \preccurlyeq_Q^\circ \Pi_2$.
\end{proposition}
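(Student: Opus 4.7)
The proof is essentially a direct unpacking of the two definitions, with no real obstacle. The plan is to fix $(p_1,p_2)\in Q$, verify condition~\ref{def:valuatedAlternatingSimulationRelation:terminalCost} of Definition~\ref{def:valuatedAlternatingSimulationRelation} immediately from the corresponding condition of Definition~\ref{def:FeedbackRefinementRelation}, and then produce, for each $u_2 \in U_2$ and each $\varepsilon>0$, a suitable $u_1\in U_1$ witnessing~\ref{e:def:valuatedAlternatingSimulationRelation:dyn}. Crucially, under a valuated feedback refinement relation the hypotheses about $G_1(p_1)>0$ and boundedness of $g_2(p_2,\cdot,u_2)$ and $(P_1(0))\circ Q^{-1}$ on $F_2(p_2,u_2)$ can simply be discarded: we will establish~\ref{e:def:valuatedAlternatingSimulationRelation:dyn} unconditionally.

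The construction of $u_1$ is the natural one. Given $u_2\in U_2$, use condition~\ref{def:FeedbackRefinementRelation:in} of Definition~\ref{def:FeedbackRefinementRelation} to set $u_1 := u_2 \in U_1$. Next, pick any $q_1\in F_1(p_1,u_1)$. Since $Q$ is strict by hypothesis, the set $Q(q_1)$ is nonempty, so we may choose some $q_2\in Q(q_1)$. Condition~\ref{def:FeedbackRefinementRelation:dyn} of Definition~\ref{def:FeedbackRefinementRelation} then gives $q_2 \in Q(F_1(p_1,u_2)) \subseteq F_2(p_2,u_2)$, so that $q_2 \in F_2(p_2,u_2)\cap Q(q_1)$ as required.

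Finally, applying condition~\ref{def:FeedbackRefinementRelation:runningCost} of Definition~\ref{def:FeedbackRefinementRelation} to the pairs $(p_1,p_2)$ and $(q_1,q_2)$ yields
\[
g_1(p_1,q_1,u_1) \;\le\; g_2(p_2,q_2,u_2) \;\le\; \varepsilon + g_2(p_2,q_2,u_2),
\]
where the last inequality holds because $\varepsilon>0$ and $g_2$ is non-negative (so no care about $\pm\infty$ arithmetic is needed beyond the convention $\pm\infty + x = \pm\infty$ fixed in Section~\ref{s:prelims}). This establishes~\ref{e:def:valuatedAlternatingSimulationRelation:dyn}, completing the verification that $\Pi_1 \preccurlyeq_Q^\circ \Pi_2$.

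The only subtlety worth flagging is the use of strictness of $Q$ to guarantee the existence of a witness $q_2\in Q(q_1)$; without it, the empty choice of $q_2$ could fail the existential clause in~\ref{e:def:valuatedAlternatingSimulationRelation:dyn} even though the cost inequality would hold vacuously. Beyond that, the argument is a one-line chase through the four clauses of Definition~\ref{def:FeedbackRefinementRelation}.
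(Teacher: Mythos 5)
Your proof is correct and is exactly the intended argument; the paper states this proposition without proof precisely because it follows by the direct unpacking you give (take $u_1=u_2$ via $U_2\subseteq U_1$, use strictness of $Q$ and condition~\ref{def:FeedbackRefinementRelation:dyn} to produce $q_2\in F_2(p_2,u_2)\cap Q(q_1)$, and conclude with condition~\ref{def:FeedbackRefinementRelation:runningCost}). Your observation that the boundedness and $G_1(p_1)>0$ hypotheses can be discarded, and that strictness of $Q$ is the one point actually needed, is accurate.
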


Apart from conditions
\ref{def:FeedbackRefinementRelation:terminalCost} and
\ref{def:FeedbackRefinementRelation:runningCost} in
Definition~\ref{def:FeedbackRefinementRelation},
and in the special case of strict transition functions considered in
the present paper, the notion of valuated feedback refinement relation
coincides with its qualitative variant introduced
in \cite{i14sym}.
Hence, we can take advantage of the controller refinement scheme
presented in \cite{i14sym}. That is, we refine any abstract controller
by serially connecting it with a valuated feedback refinement relation
used as an interface; see \ref{fig:closedLoopVFRR}. We therefore need
to formalize the concept of serial composition:
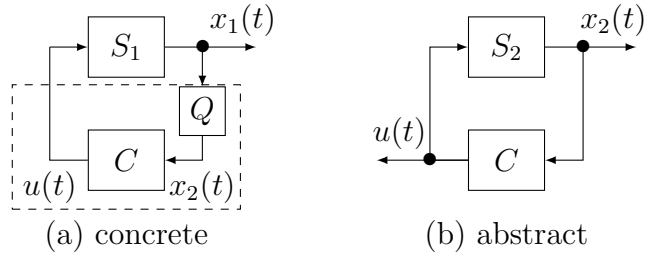
\begin{figure}[hbt]
\begin{centering}
\begin{tikzpicture}[node distance=1.8cm, >=latex]
  \def\dx{0.15}

  \tikzset{
    block/.style = {draw,
                    rectangle,
                    minimum height = .8cm,
                    minimum width = 1cm},
    interface/.style = {draw,
                        rectangle,
                        minimum height = .5cm,
                        minimum width = .5cm}}
  \draw node at (5,0) [block] (F)  {$S_2$};
  \draw node at (5,-1.5) [block] (C)  {$C$};

  \draw node at (5,-2.5) {(b) abstract};

  \draw[->] (C.west) -- node[above,near end] {$u(t)$} ++(-1.2,0);
  \draw[->] (C.west) -| ($(F.west)-(.5,0)$) -- ($(F.west)-(0,0)$);

  \draw[->] ($(F.east)+(.5,0)$) |- (C.east);
  \draw[->] (F.east)  -- node[above,near end] {$x_2(t)$} ++(1.2,0);

  \draw[->] node at ($(F.east)+(.5,0)$) {\textbullet};
  \draw[->] node at ($(C.west)+(-.5,0)$) {\textbullet};

  \draw node at (0,0) [block] (F1)  {$S_1$};
  \draw node at (0,-1.5) [block] (C1)  {$C$};
  
  \draw node at (0,-2.5) {(a) concrete};

  \draw node at (1,-.85) [interface] (Q) {$Q$};


  \draw[dashed] (-1.5,-0.5) rectangle (1.5,-2.15);

  \draw (C1.west) -| node[below] {$u(t)$} ($(F1.west)+(-0.5,0)$);
  
  \draw[->] ($(F1.west)+(-0.5,0)$) |- (F1.west);



  \draw[->] (F1.east) -| (Q.north);
  \draw[->] (Q.south) |- node[below] {$x_2(t)$} (C1.east);
  \draw[->] ($(F1.east)+(0.5,0)$)  -- node[above,near end] {$x_1(t)$} ++(.7,0);

  \draw[->] node at ($(F1.east)+(.5,0)$) {\textbullet};

\end{tikzpicture}
\end{centering}
\caption{\label{fig:closedLoopVFRR}
\label{review:item6:text:b}
Using a valuated feedback refinement relation $Q$ from $S_1$ to $S_2$,
an abstract controller $C$ is refined into the serial
composition of $Q$ and $C$.
}
\end{figure}

\begin{definition}
\label{def:serial}
Let $C$ be a controller of the form \ref{e:def:Controller},
$X'$
be a non-empty set and
$Q \colon X'\rightrightarrows \widetilde{X}$
be a strict map. The \concept{serial composition of $Q$ and $C$},
denoted $C \circ Q$, is the
controller
$( Z, Z_{0}, X', \widetilde{U}, H' )$
with
$H'(z,x')=H(z,Q(x'))$.
\end{definition}

As demonstrated in \cite{i14sym} the proposed controller refinement
scheme implies a comparison between closed-loop behaviors. Here we
extend that result to guarantee a comparison between closed-loop
value functions:

\begin{theorem}
\label{th:ControllerRefinement}
Let $\Pi_1$ and $\Pi_2$ be optimal control problems of the
form~\ref{e:two:ocp}, and suppose that $\Pi_1 \preccurlyeq_Q \Pi_2$
and $C \in \mathcal{F}(X_2,U_2)$.
Then $C \circ Q \in \mathcal{F}(X_1,U_1)$ and we have
\begin{equation}
\label{e:th:ControllerRefinement}
\forall_{p_1 \in X_1}
L_1(p_1)
\le
\sup L_2( Q(p_1) ),
\end{equation}
where $L_1$  and $L_2$ are the closed-loop value functions of $\Pi_1$  and $\Pi_2$
associated with $C\circ Q$  and $C$, respectively.
\end{theorem}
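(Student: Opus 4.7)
The plan is to proceed in three stages: verify well-definedness of the refined controller, lift each concrete closed-loop trajectory to an abstract one having the same input and stopping signals, and then compare costs term-by-term using the conditions of Definition~\ref{def:FeedbackRefinementRelation}.

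First, I would check that $C \circ Q$ is legitimately a controller for the plant $(X_1, U_1, F_1)$ in the sense of Definition~\ref{def:System}. Since $Q$ is strict and $H$ is strict, the composed transition map $H'(z, x_1) = H(z, Q(x_1))$ is strict. The alphabets line up because $X_1 \subseteq \widetilde{X}_1$ (trivially, taking $\widetilde{X}_1 = X_1$ suffices) and the output alphabet $\widetilde{U}_2 \subseteq U_2 \subseteq U_1$ by condition~\ref{def:FeedbackRefinementRelation:in}. Hence $C \circ Q \in \mathcal{F}(X_1, U_1)$.

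Second, I would fix $p_1 \in X_1$ and an arbitrary $(u, v, x_1) \in \mathcal{B}_{p_1}(C \circ Q \times S_1)$, with corresponding controller-state signal $z$. I would construct $x_2 \colon \mathbb{Z}_+ \to X_2$ by induction so that simultaneously (a) $(x_1(t), x_2(t)) \in Q$ for all $t$, (b) $(u, v, z, x_2)$ is a solution of $C$, and (c) $(u, x_2)$ is a solution of $S_2$. The base case chooses $x_2(0) \in Q(p_1)$ such that $(z(1), u(0), v(0)) \in H(z(0), x_2(0))$; such a choice exists because $(z(1), u(0), v(0)) \in H'(z(0), p_1) = H(z(0), Q(p_1))$. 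For the inductive step, assuming $x_2(t) \in Q(x_1(t))$ has been chosen compatibly with $H$, the dynamics condition~\ref{def:FeedbackRefinementRelation:dyn} gives $Q(x_1(t+1)) \subseteq Q(F_1(x_1(t), u(t))) \subseteq F_2(x_2(t), u(t))$, and unfolding $H'$ at time $t+1$ again lets us pick $x_2(t+1) \in Q(x_1(t+1))$ compatible with the controller's next transition. Thus $(u, v, x_2) \in \mathcal{B}_{x_2(0)}(C \times S_2)$ with $x_2(0) \in Q(p_1)$.

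Third, I would compare costs. If $v \equiv 0$, both $J(u, v, x_1)$ and $J(u, v, x_2)$ equal $\infty$. Otherwise, letting $T = \min v^{-1}(1)$, conditions~\ref{def:FeedbackRefinementRelation:terminalCost} and~\ref{def:FeedbackRefinementRelation:runningCost} yield $G_1(x_1(T)) \le G_2(x_2(T))$ and $g_1(x_1(t), x_1(t+1), u(t)) \le g_2(x_2(t), x_2(t+1), u(t))$ for each $t \in \intco{0;T}$, so that $J(u, v, x_1) \le J(u, v, x_2) \le L_2(x_2(0)) \le \sup L_2(Q(p_1))$. Taking the supremum over all $(u, v, x_1) \in \mathcal{B}_{p_1}(C \circ Q \times S_1)$ gives~\ref{e:th:ControllerRefinement}.

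The main subtlety is the simultaneous inductive construction of $x_2$: at each step the candidate $x_2(t+1)$ must lie in $Q(x_1(t+1))$ (for the cost comparison), in $F_2(x_2(t), u(t))$ (so that $x_2$ solves $S_2$), and be such that the existing controller transition $(z(t+2), u(t+1), v(t+1))$ is realized by $H$ at $(z(t+1), x_2(t+1))$ (so that $(u, v, z, x_2)$ solves $C$). The first two requirements are handled by condition~\ref{def:FeedbackRefinementRelation:dyn}, while the third is guaranteed by the definition $H' = H \circ (\mathrm{id} \times Q)$: any element of $H'(z(t+1), x_1(t+1))$ is attained at some point of $Q(x_1(t+1))$, and it is exactly that point that we must select as $x_2(t+1)$. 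Bookkeeping this selection is the only delicate part of the argument.
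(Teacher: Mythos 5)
Your proposal is correct and follows essentially the same route as the paper's proof: lift the concrete trajectory to an abstract one related by $Q$ with the same $(u,v,z)$, use condition \ref{def:FeedbackRefinementRelation:dyn} to verify the lifted trajectory solves $S_2$, and compare costs via conditions \ref{def:FeedbackRefinementRelation:terminalCost} and \ref{def:FeedbackRefinementRelation:runningCost}. The only difference is that you spell out the inductive selection of $x_2(t)$ compatible with $H$, which the paper leaves implicit in its appeal to Definitions \ref{def:System} and \ref{def:serial}; your bookkeeping there is sound.
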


\begin{proof}
The fact that $C \circ Q \in \mathcal{F}(X_1,U_1)$ is obvious.
Denote the cost functional associated with $\Pi_i$ by $J_i$,
set $S_i = (X_i,U_i,F_i)$, $i \in \{1,2\}$, and let
$(u,v,x_1) \in \mathcal{B}( (C \circ Q) \times S_1 )$.
We claim that there exists a signal
$x_2 \colon \mathbb{Z}_{+} \to X_2$ satisfying
$x_2(0) \in Q( x_1(0) )$,
$(u,v,x_2) \in \mathcal{B}( C \times S_2 )$, and
$J_1(u,v,x_1) \le J_2(u,v,x_2)$. This implies
\ref{e:th:ControllerRefinement} and completes our proof.

To prove our claim, we first note that there exists a signal
$z$ defined on $\mathbb{Z}_{+}$ such that $(u,v,z,x_1)$ is a solution
of $C \circ Q$ and $(u,x_1)$ is a solution of $S_1$.
By the former fact and Definitions \ref{def:System} and
\ref{def:serial}, there exists a signal
$x_2 \colon \mathbb{Z}_{+} \to X_2$ such that
$(u,v,z,x_2)$ is a solution of $C$ and
$( x_1(t), x_2(t) ) \in Q$ for all $t \in \mathbb{Z}_{+}$.
Using
\ref{def:FeedbackRefinementRelation:dyn} in Definition
\ref{def:FeedbackRefinementRelation} we obtain
$x_2(t+1)
\in
Q(x_1(t+1))
\subseteq
Q( F_1( x_1(t), u(t) ) )
\subseteq
F_2( x_2(t), u(t) )$ for all $t$. Hence, $(u,x_2)$ is a solution of
$S_2$, and so $(u,v,x_2) \in \mathcal{B}(C \times S_2)$.
We obviously have $J_1(u,v,x_1) \le J_2(u,v,x_2)$ if $v = 0$, and if
$v \neq 0$ the same estimate follows from
\ref{def:FeedbackRefinementRelation:terminalCost} and
\ref{def:FeedbackRefinementRelation:runningCost} in
Definition~\ref{def:FeedbackRefinementRelation}.
\end{proof}

For easier reference in later sections, we reformulate Theorem
\ref{th:ControllerRefinement} in terms of pointwise upper performance
bounds:

\begin{definition}
\label{def:PointwiseUpperBound}
Let $Q \colon X_1 \rightrightarrows X_2$ be strict and
let $f \colon X_2 \to \intcc{0,\infty}$.
Then the function $\hat f^{(Q)} \colon X_1 \to \intcc{0,\infty}$
defined by
\[
\hat f^{(Q)} (x)
=
\sup
f( Q( x ) )
\]
is called
\concept{pointwise upper bound of $f$ associated with $Q$}.
\end{definition}

\begin{corollary}
\label{cor:th:ControllerRefinement}
Under the hypotheses and in the notation of Theorem
\ref{th:ControllerRefinement} we have
$L_1 \le \hat L_2^{(Q)}$.
\end{corollary}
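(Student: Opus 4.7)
The plan is very short because this corollary is a direct reformulation of Theorem \ref{th:ControllerRefinement} in the language of Definition \ref{def:PointwiseUpperBound}. First, I would observe that the hypothesis $\Pi_1 \preccurlyeq_Q \Pi_2$ of Theorem \ref{th:ControllerRefinement} includes the requirement that $Q$ is strict (from Definition \ref{def:FeedbackRefinementRelation}), so $\hat L_2^{(Q)}$ is well-defined via Definition \ref{def:PointwiseUpperBound}.

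Then, for each $p_1 \in X_1$, Theorem \ref{th:ControllerRefinement} yields
\begin{equation*}
L_1(p_1) \le \sup L_2( Q(p_1) ),
\end{equation*}
while Definition \ref{def:PointwiseUpperBound} (applied with $f = L_2$ and $x = p_1$) gives
\begin{equation*}
\hat L_2^{(Q)}(p_1) = \sup L_2(Q(p_1)).
\end{equation*}
Combining the two lines yields $L_1(p_1) \le \hat L_2^{(Q)}(p_1)$ for every $p_1 \in X_1$, which is precisely the pointwise inequality $L_1 \le \hat L_2^{(Q)}$ claimed.

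There is no genuine obstacle here; the only thing to be careful about is that the inequalities in this paper are defined pointwise on $X_1$ (as made explicit in Section \ref{s:prelims}), so the passage from the pointwise bound to the functional inequality $L_1 \le \hat L_2^{(Q)}$ is immediate. Consequently the proof is essentially a one-line unfolding of the new notation, and it seems natural to state it as such rather than repeat the arguments of Theorem \ref{th:ControllerRefinement}.
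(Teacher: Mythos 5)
Your proposal is correct and matches the paper exactly: the paper states this corollary without proof precisely because it is, as you observe, an immediate unfolding of Definition \ref{def:PointwiseUpperBound} applied to the conclusion \ref{e:th:ControllerRefinement} of Theorem \ref{th:ControllerRefinement}. Your remark that strictness of $Q$ (guaranteed by Definition \ref{def:FeedbackRefinementRelation}) makes $\hat L_2^{(Q)}$ well-defined is a correct and worthwhile sanity check.
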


\section{Main results}
\label{s:convergence}

In this section, we
introduce a notion of abstraction of optimal control problems which
comes with a non-negative conservatism parameter. We will then
show that the concrete value function can be approximated arbitrarily
closely using value functions of sufficiently precise abstractions.
Moreover, we shall show that if abstract controllers can be chosen to
be optimal, the performance of the closed-loop in
\ref{fig:closedLoopVFRR} converges to the concrete value function as
well.
The latter result implies a kind of completeness property of
controller synthesis based on abstractions of conservatism introduced in
this paper, an aspect to be discussed at the end of the section.

\subsection{Abstractions and their conservatism}
\label{ss:Abstractions}

To begin with, we first introduce abstractions devoid of any
notion of conservatism. In doing so, we focus on a case
where the abstract state space is a cover of the concrete state space,
which has turned out to be canonical in the qualitative setting
\cite[Sec.~VII]{i14sym}.
Here, a \concept{cover} of a set $X$ is a set of subsets of $X$ whose
union equals $X$.

\begin{definition}
\label{def:Abstraction}
Let $\Pi_1$ and $\Pi_2$ be optimal control problems  of the
form~\ref{e:two:ocp}, where $X_2$ is a
cover of $X_1$ by non-empty subsets.
Then $\Pi_2$ is an \concept{abstraction} of $\Pi_1$ if
$\Pi_1 \preccurlyeq_{\in} \Pi_2$, where
$\mathop{\in} \colon X_1 \rightrightarrows X_2$ denotes the membership
relation.
\end{definition}

\noindent
For later reference, we explicitly state our requirements on
abstractions.

\begin{proposition}
\label{prop:Abstraction}
Let $\Pi_1$ and $\Pi_2$ be optimal control problems  of the
form~\ref{e:two:ocp}, where $X_2$ is a
cover of $X_1$ by non-empty subsets.
Then $\Pi_1 \preccurlyeq_{\in} \Pi_2$ iff the following conditions
hold whenever
$p \in \Omega \in X_2$, $p' \in \Omega' \in X_2$ and $u \in U_2$:
\begin{enumerate}
\item
\label{prop:Abstraction:in}
$U_2\subseteq U_1$;
\item
\label{prop:Abstraction:terminalCost}
$
G_1(p)
\le
G_2(\Omega)
$;
\item
\label{prop:Abstraction:runningCost}
$
g_1(p,p',u)
\le
g_2(\Omega,\Omega',u)
$;
\item
\label{prop:Abstraction:iii}
$
\Omega' \cap F_1(\Omega,u) \neq \emptyset
\implies
\Omega'\in F_2(\Omega,u)
$.
\qedhere
\label{prop:Abstraction:last}
\end{enumerate}
\end{proposition}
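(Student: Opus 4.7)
The plan is to prove the proposition as a direct specialization of Definition~\ref{def:FeedbackRefinementRelation} with $Q$ taken to be the membership relation $\mathop{\in}\colon X_1 \rightrightarrows X_2$, $(p,\Omega) \in \mathop{\in}$ iff $p \in \Omega$. First I would dispatch the strictness requirement: the assumption that $X_2$ is a cover of $X_1$ by non-empty subsets means exactly that for every $p \in X_1$ there is some $\Omega \in X_2$ with $p \in \Omega$, i.e.~$\mathop{\in}(p) \neq \emptyset$. Hence strictness of $\mathop{\in}$ is automatic and need not be listed among conditions \ref{prop:Abstraction:in}--\ref{prop:Abstraction:iii}.

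Next I would translate each of the four clauses of Definition~\ref{def:FeedbackRefinementRelation}, substituting $Q = \mathop{\in}$ and quantifying over representatives. Clauses \ref{def:FeedbackRefinementRelation:in}--\ref{def:FeedbackRefinementRelation:runningCost} match clauses \ref{prop:Abstraction:in}--\ref{prop:Abstraction:runningCost} of the proposition verbatim once one recognizes that $(p_1,p_2) \in \mathop{\in}$ becomes $p \in \Omega$ and $(q_1,q_2) \in \mathop{\in}$ becomes $p' \in \Omega'$. No work beyond this substitution is required for the three easy conditions, so the real content is in the transition-function condition.

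The main (and only mildly subtle) step is to establish the equivalence between clause \ref{def:FeedbackRefinementRelation:dyn} of Definition~\ref{def:FeedbackRefinementRelation}, namely
\begin{equation*}
\forall (p,\Omega) \in \mathop{\in}\ \forall u \in U_2 \colon\quad \mathop{\in}\!\bigl(F_1(p,u)\bigr) \subseteq F_2(\Omega,u),
\end{equation*}
and clause \ref{prop:Abstraction:iii} of the proposition, which is phrased in terms of $F_1(\Omega,u) = \bigcup_{p \in \Omega} F_1(p,u)$. For the forward direction I would fix $p \in \Omega$, $p' \in \Omega'$, $u \in U_2$ with $p' \in F_1(\Omega,u)$, pick some $\tilde p \in \Omega$ with $p' \in F_1(\tilde p, u)$, and then note that $\Omega' \in \mathop{\in}\!(p') \subseteq \mathop{\in}\!(F_1(\tilde p, u)) \subseteq F_2(\Omega, u)$ by the definitional inclusion applied to $(\tilde p, \Omega) \in \mathop{\in}$. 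For the reverse direction, given $\Omega' \in \mathop{\in}\!(F_1(p,u))$, pick $p' \in F_1(p,u) \cap \Omega'$; since $p \in \Omega$ we have $p' \in F_1(\Omega, u)$, so $\Omega' \cap F_1(\Omega,u) \neq \emptyset$ and clause \ref{prop:Abstraction:iii} yields $\Omega' \in F_2(\Omega,u)$.

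I do not expect any real obstacle here; the proof is an unpacking of definitions whose only delicacy is the distinction between $F_1(p,u)$ (applied to a concrete state) and $F_1(\Omega,u)$ (applied as a set-valued image to a subset of $X_1$). Concluding the proof amounts to assembling the equivalences above into the announced ``iff'' statement.
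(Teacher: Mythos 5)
Your proof is correct and follows essentially the same route as the paper: both reduce the claim to Definition~\ref{def:FeedbackRefinementRelation} with $Q = \mathop{\in}$, note that strictness is automatic from the cover assumption, and treat conditions \ref{prop:Abstraction:in}--\ref{prop:Abstraction:runningCost} as verbatim translations. The only difference is that for the equivalence of condition \ref{prop:Abstraction:iii} with condition \ref{def:FeedbackRefinementRelation:dyn}, the paper defers to a cited result (Prop.~VII.1 of \cite{i14sym}) applied to suitably constructed systems, whereas you give the short direct argument distinguishing $F_1(p,u)$ from $F_1(\Omega,u)$ — which is correct and makes your proof more self-contained.
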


\begin{proof}
Obviously, the relation $\mathop{\in}$ is strict as $X_2$ is
a cover of $X_1$, and if $Q = \mathop{\in}$, then
the conditions \ref{prop:Abstraction:in} through
\ref{prop:Abstraction:runningCost}
are equivalent to the respective conditions
in Definition~\ref{def:FeedbackRefinementRelation}.
The equivalence of condition \ref{prop:Abstraction:iii}
to the condition
\ref{def:FeedbackRefinementRelation:dyn} in
Definition~\ref{def:FeedbackRefinementRelation} is obtained by an
application of \cite[Prop.~VII.1]{i14sym} to the systems
$S_i=(X_i,X_i,U_i,U_i,X_i,F_i,\id)$, $i\in\{1,2\}$. 
\end{proof}

As we can see, even rather conservative approximations of the concrete
optimal control problem may qualify as abstractions.
We aim at resolving that issue
by introducing a
suitable notion of \concept{conservatism}.
To this end, we first need to introduce some
additional notation.
For any metric space $(X,d)$ we define
\begin{align*}
d(x,N)
&=
\inf
\Menge{d(x,y)}{y \in N},\\
d(M,N)
&=
\inf
\Menge{d(x,y)}{x \in M, y \in N}
\end{align*}
for all $x \in X$ and all nonempty subsets
$M, N \subseteq X$.
We use $\oBall(c,r)$ and $\cBall(c,r)$ to denote the open,
respectively, closed ball with center $c\in X$ and radius
$r > 0$, and we adopt the convention that $\cBall(c,0) = \{ c \}$.
We denote the diameter of a subset $M \subseteq X$ by $\diam(M)$.
See
\cite{HuPapageorgiou97.i}.

\begin{definition}
\label{def:AbstractionOfPrecision}
Let $\Pi_2$ be an abstraction of $\Pi_1$ and suppose that
$\Pi_1$ and $\Pi_2$ are of the form~\ref{e:two:ocp},
that $U_1$ and $X_1$ are metric spaces, and that
the elements of $X_2$ are closed subsets of $X_1$.
\\
Then $\Pi_2$ is an \concept{abstraction of
conservatism
$\infty$} of
$\Pi_1$.
Moreover, 
$\Pi_2$ is an \concept{abstraction of
conservatism
$\rho \in \mathbb{R}_{+}$} of
$\Pi_1$ if the following conditions hold
for all $\Omega, \Omega' \in X_2$ and all $u \in U_2$:
\begin{enumerate}
\item
\label{def:AbstractionOfPrecision:in}
$U_1 = \cBall(U_2,\rho)$;
\item
\label{def:AbstractionOfPrecision:terminalCost}
$
G_2(\Omega)
\le
\rho
+
\sup G_1(\Omega)
$;
\item
\label{def:AbstractionOfPrecision:runningCost}
$
g_2(\Omega,\Omega',u)
\le
\rho
+
\sup g_1(\Omega, \Omega', u)
$.
\end{enumerate}
If $\Omega$ satisfies the condition
\begin{equation}
\label{e:def:AbstractionOfPrecision}
G_1(\Omega)
\cup
g_1(\Omega,X_1,U_1)
\not=
\{ \infty \}
,
\end{equation}
then we additionally require the following:
\begin{enumerate}[resume]
\item
\label{def:AbstractionOfPrecision:iii}
$F_2(\Omega,u)
\subseteq
\Menge{\Omega'' \in X_2}%
{ d(\Omega'', F_1(\Omega, u ) ) \le \rho}
$,
where $d$ denotes the metric on $X_1$;
\item
\label{def:AbstractionOfPrecision:diam}
$\diam( \Omega ) \le \rho$.
\label{def:AbstractionOfPrecision:last}
\qedhere
\end{enumerate}
\end{definition}

As we had announced, Definition \ref{def:AbstractionOfPrecision}
limits the conservatism of abstractions.
Specifically, while the conditions \ref{prop:Abstraction:in} through
\ref{prop:Abstraction:iii} in Proposition
\ref{prop:Abstraction} demand that
$U_1$, $G_2(\Omega)$, $g_2(\Omega,\Omega',u)$ and $F_2(\Omega,u)$
merely over-approximate
$U_2$, $\sup G_1(\Omega)$, $\sup g_1(\Omega,\Omega',u)$ and
$F_1(\Omega,u)$,
respectively, the respective conditions in Definition
\ref{def:AbstractionOfPrecision} mandate that the approximation
error does not exceed the value of the conservatism parameter $\rho$, and
\ref{def:AbstractionOfPrecision:diam} bounds the error by which
abstract states over-approximate concrete states.
The condition \ref{e:def:AbstractionOfPrecision} restricts
the requirements \ref{prop:Abstraction:iii} and
\ref{def:AbstractionOfPrecision:diam} to regions where the concrete
value function is possibly finite.

\subsection{Arbitrarily close approximation of concrete value functions}
\label{ss:convergence:convergence}

We next need to choose a suitable notion of convergence. On
the one hand, pointwise convergence is not powerful enough, e.g. to
imply our completeness results in Section \ref{ss:AlgorithmicSolution:OverallMethod},
and similarly for convergence in Lebesgue spaces
as employed in \cite{GrueneJunge08}.
On the other hand, the stronger concept of uniform convergence would
require that any points of discontinuity
of the concrete value function are
also present, exactly and not only approximately, in the functions to
approximate it, which is not realistic to assume. We here
rely on a concept that lies in between the aforementioned
extremes, and the first main result of our paper shows that the
hypographs of pointwise upper bounds of the abstract value functions locally
approximate the hypograph of the concrete value function.
See \ref{fig:hypoConv}.
\begin{figure}[t]
\begin{center}
\noindent
\hspace*{\fill}
\psfrag{BhypoVe}[][]{$\oBall(\hypo(V),\varepsilon)$}
\psfrag{G}[][]{$N$}
\psfrag{e}[r][r]{$\varepsilon$}
\psfrag{hypoV}[][]{$\hypo(V)$}
\psfrag{V}[][]{$V$}
\psfrag{W}[][]{$W$}
\psfrag{Rn}[r][r]{$X$}
\psfrag{R}[r][r]{$\mathbb{R}$}
\includegraphics[width=\ifCLASSOPTIONonecolumn.35\else.7\fi\linewidth]{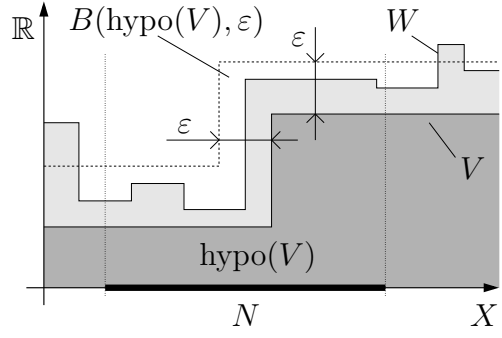}%
\hspace*{\fill}
\end{center}
\caption{\label{fig:hypoConv}%
Approximation of the hypograph of the map
$V \colon X \to \mathbb{R}_{+} \cup \{ \infty \}$ by the hypograph of
$W \ge V$,
on the subset $N \subseteq X$ \cite{Reissig17DPc}.%
}
\end{figure}
The result requires tightening the hypothesis
\ref{h:OptimalityPrinciples:MAX} on the optimal control problem
\ref{e:OCP} as follows:
\begin{hypothesisA}
\label{h:Convergence}
$X$ is a proper metric space,
$U$ is a compact metric space,
$F$ is compact-valued, and
$g$, $G$ and $F$ are u.s.c..
\end{hypothesisA}
Here, a metric space is \concept{proper} if every closed ball
is compact, a requirement satisfied, e.g. by $\mathbb{R}^n$ and
by all of its closed metric subspaces.
Hypothesis \ref{h:Convergence} is extended
to optimal control problems $\Pi_i$ of the form~\ref{e:two:ocp}
in the obvious way.
In the following, we do not mention explicitly the association of
pointwise upper bounds on abstract value functions with the respective
membership relations.

\begin{theorem}
\label{th:UpperBoundAndConvergence}
Let $\Pi$ be the optimal control problem \ref{e:OCP} and let $V$
denote the value function of $\Pi$.
Then the pointwise upper bound of the value function of any
abstraction of $\Pi$ is an upper bound on $V$.
If \ref{e:OCP} additionally satisfies~\ref{h:Convergence}, then
for every $p \in X$ and every $\varepsilon > 0$ there exist a
neighborhood $N \subseteq X$ of $p$ and
some $\rho \in \mathbb{R}_{+} \setminus \{ 0 \}$ such that
\begin{equation}
\label{e:th:UpperBoundAndConvergence}
( N \times \mathbb{R} )
\cap
\hypo W
\subseteq
\oBall( \hypo V, \varepsilon )
\end{equation}
holds whenever $W$ is the pointwise upper bound on the value
function of an abstraction of conservatism $\rho$ of \ref{e:OCP}.
\end{theorem}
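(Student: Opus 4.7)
The upper bound assertion is immediate from Section~\ref{s:ComparisonRefinementAbstraction}. By Definition~\ref{def:Abstraction}, any abstraction $\Pi_2$ of $\Pi$ satisfies $\Pi \preccurlyeq_{\in} \Pi_2$; Proposition~\ref{prop:vFRR_is_also_sASR} promotes this to the valuated alternating simulation $\Pi \preccurlyeq_{\in}^{\circ} \Pi_2$, and Theorem~\ref{th:vASR_Bound_On_V} then gives $V(p) \le V_2(\Omega)$ whenever $p \in \Omega \in X_2$. Taking the supremum over all abstract states containing $p$ yields $V \le W$.

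\textbf{Reduction for convergence.} For the hypograph inclusion, first dispose of the trivial case $V(p) = \infty$: then $\{p\} \times \mathbb{R} \subseteq \hypo V$, so $\oBall(p,\varepsilon/2) \times \mathbb{R} \subseteq \oBall(\hypo V, \varepsilon)$ and $N = \oBall(p,\varepsilon/2)$ works with any $\rho > 0$. Assume henceforth that $V(p) < \infty$. The aim is to produce $N \subseteq \oBall(p,\varepsilon/2)$ and $\rho_0 > 0$ such that, for every abstraction of conservatism $\rho \le \rho_0$ and its pointwise upper bound $W$, one has $W(q) \le V(p) + \varepsilon/2$ on $N$. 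Granting this, any $(q,\gamma) \in \hypo W \cap (N \times \mathbb{R})$ satisfies $\gamma \le V(p) + \varepsilon/2$, so either $(p,\gamma) \in \hypo V$ (if $\gamma \le V(p)$) or $(p,V(p)) \in \hypo V$ (otherwise) lies within distance $\varepsilon$ of $(q,\gamma)$.

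\textbf{Core estimate and main obstacle.} To obtain the uniform bound on $W$, I would combine Corollary~\ref{cor:th:OptimalityPrinciples:MAX} with a finite-horizon approximation of abstract value iteration by concrete value iteration. Since $P^T(G)(p) \searrow V(p)$, choose $T \in \mathbb{Z}_{+}$ with $P^T(G)(p) \le V(p) + \varepsilon/4$; since each $P^T(G)$ is u.s.c.~(shown inside the proof of Corollary~\ref{cor:th:OptimalityPrinciples:MAX}) and closed balls are compact under~\ref{h:Convergence}, choose $r_1 > 0$ with $\sup P^T(G)(\cBall(p,r_1)) \le V(p) + 3\varepsilon/8$. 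The technical heart is then an induction on $T$ that, for each fixed $T$ and each $\delta > 0$, produces $\rho_T > 0$ such that every abstraction of conservatism $\rho \le \rho_T$ satisfies
\[
P_2^T(G_2)(\Omega) \le \sup P^T(G)(\cBall(\Omega, T\rho)) + \delta
\]
for every $\Omega \in X_2$ meeting the finiteness condition~\ref{e:def:AbstractionOfPrecision}. The inductive step propagates this estimate through one application of the abstract Bellman operator, exploiting the conservatism bounds $G_2 \le \sup G_1 + \rho$, $g_2 \le \sup g_1 + \rho$, $U_1 = \cBall(U_2,\rho)$, $F_2(\Omega,u) \subseteq \{\Omega'' : d(\Omega'', F_1(\Omega, u)) \le \rho\}$, and $\diam(\Omega) \le \rho$, together with Berge's Maximum Theorem~\ref{th:BergesMaximumTheorem} and the u.s.c.~of $g$, $G$, $F$. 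Combining the induction result with $V_2 \le P_2^T(G_2)$ and the fact that any abstract state containing $q$ has diameter at most $\rho$ (hence is contained in $\cBall(p,r_1)$ once $q$ is close to $p$ and $\rho$ is small), one obtains the desired bound $W(q) \le V(p) + \varepsilon/2$ on a sufficiently small $N$. The main obstacle is precisely this inductive estimate: one must carefully control how the $O(\rho)$ conservatism errors and the Hausdorff-type transition errors compose across $T$ Bellman applications, while relying only on the merely u.s.c.~regularity of the problem data and with no continuity of $V$ itself.
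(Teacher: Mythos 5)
Your first claim and your reduction of the hypograph inclusion to the pointwise bound $W \le V(p)+\varepsilon/2$ on a small ball are both correct and, for the first claim, identical to the paper's argument. The problem is that the ``technical heart'' you describe --- the uniform inductive estimate $P_2^T(G_2)(\Omega) \le \sup P^T(G)(\cBall(\Omega,T\rho))+\delta$ for all admissible $\Omega$ once $\rho \le \rho_T$ --- is not a step you have proved but the entire difficulty of the theorem, and as stated it does not go through by merely ``propagating the conservatism bounds.'' Two concrete obstructions. First, $F$, $g$ and $G$ are only u.s.c., so they admit no modulus of continuity: in the inductive step you must pick a near-optimal concrete control $u$ at some $x_0\in\Omega$, pass to a nearby $u_2\in U_2$, and control $\sup$ over the inflated successor set $F_2(\Omega,u_2)$; the size of $\rho$ for which this works depends on $(x_0,u)$ through the (non-uniform) semicontinuity neighborhoods of $F$, $g$ and $P^T(G)$ at that point. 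Since the inductive hypothesis must hold at \emph{all} cells reachable in the remaining steps, you need a single $\rho_T$ valid on the whole $T$-step reachable tube, and extracting that uniformity requires a separate compactness/contradiction argument you have not supplied. Second, $F_2(\Omega,u_2)$ is only required to lie within Hausdorff-type distance $\rho$ of $F_1(\Omega,u_2)$, so it may contain cells $\Omega''$ violating \ref{e:def:AbstractionOfPrecision}, on which $P_2^T(G_2)(\Omega'')=\infty$; your estimate is restricted to admissible cells, but the abstract Bellman supremum is not. Ruling such cells out for small $\rho$ again needs a positive, point-dependent separation between $F(x_0,u)$ and the closed set where all costs are infinite, followed by the same uniformization.

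This is exactly the work the paper packages differently: it builds the auxiliary problem $\Pi_3$ on the hyperspace $K(X_1)\times\mathbb{R}_{+}$, verifies that $\Pi_3$ satisfies \ref{h:Convergence} (Lemma \ref{lem:AdmissibilityOfAuxiliaryProblem}, where all the compactness arguments with sequences of cells live), sandwiches $V_1(p)=V_3(\{p\},0)$ and $V_2(\Omega)\le V_3(\Omega,\rho)$ via valuated alternating simulation relations (Lemma \ref{lem:EstimatesUsingAuxiliaryProblem}, whose boundedness caveat in Definition \ref{def:valuatedAlternatingSimulationRelation} is precisely what absorbs the inadmissible cells), and then obtains the needed uniform-in-$T$ statement in one stroke from the upper semicontinuity of $V_3$ given by Corollary \ref{cor:th:OptimalityPrinciples:MAX}. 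So your outline is a plausible alternative route, but until the inductive estimate is actually carried out --- with the uniformization and the treatment of inadmissible successor cells made explicit --- the proposal has a genuine gap at its central step.
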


To prove the theorem we will introduce an auxiliary optimal control
problem $\Pi_3$ with the following properties.
Firstly, the state space $X_3$ of $\Pi_3$ comprises both a copy of the
concrete state space and (almost the whole of) the state spaces of
all abstractions, of arbitrary conservatism. Secondly, the value function
$V_3$ of $\Pi_3$ restricted to the concrete state space coincides with
the concrete value function $V$.
Thirdly, $V_3$ is an upper bound on any abstract value function, on
the respective subset of $X_3$.
Using the semi-continuity of $V_3$ on the whole of $X_3$, we then
conclude that the abstract value function arbitrarily
closely approximates $V$ whenever the abstract state
space sufficiently closely approximates the concrete one.

In our proof below, the notion of \concept{graph} of a set-valued map
$f \colon X \rightrightarrows Y$ refers to the set
$\Menge{(x,y) \in X \times Y}{y \in f(x)}$, and we also use
the space $K(X)$ of non-empty compact subsets of $X$
endowed with the Hausdorff metric
\cite{RockafellarWets09,HuPapageorgiou97.i}
associated with the metric on $X$,
and its subspaces $K_{\rho}(X)$ defined by
\[
K_{\rho}(X)
=
\Menge{ \Omega \in K(X)}{\diam \Omega \leq \rho}.
\]

\begin{lemma}
\label{lem:AdmissibilityOfAuxiliaryProblem}
Let $\Pi_1$ be an optimal control problem of the form \ref{e:two:ocp} that
satisfies~\ref{h:Convergence},
and denote the metric on $X_1$ by $d$. Let
$\Pi_3=(X_3,U_3,F_3,G_3,g_3)$ be given by $X_3=K(X_1) \times
\mathbb{R}_{+}$, $U_3=U_1$ and
\begin{align*}
  &F_3((\Omega,\rho),u)
=\\
&\quad\qquad\Menge{ \Omega' \in K_{\rho}(X_1) }%
{ d( \Omega', F_1( \Omega, \cBall( u, \rho ) ) ) \le \rho }
\times \{\rho\},\\
&G_3((\Omega, \rho))
=
\rho
+
\sup G_1(\Omega),
\\
&g_3((\Omega,\rho),(\Omega',\rho'),u)
=
\rho
+
\sup g_1( \Omega, \Omega', \cBall(u,\rho) ).
\end{align*}
Then
$\Pi_3$
is an optimal control problem satisfying~\ref{h:Convergence}.
\end{lemma}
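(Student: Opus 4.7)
The plan is to verify, separately, that (i) $\Pi_3$ is a well-defined optimal control problem per Definition \ref{d:OST}, and (ii) each clause of Hypothesis \ref{h:Convergence} holds for $\Pi_3$. Part (i) is straightforward: $X_3 = K(X_1) \times \mathbb{R}_{+}$ and $U_3 = U_1$ are nonempty; strictness of $F_3$ follows because for any $(\Omega, \rho, u)$, choosing any $y \in F_1(\Omega, u) \subseteq F_1(\Omega, \cBall(u, \rho))$ (which exists as $F_1$ is strict and $\Omega$ is nonempty), the singleton $\{y\}$ lies in $K_{\rho}(X_1)$ and satisfies $d(\{y\}, F_1(\Omega, \cBall(u, \rho))) = 0 \le \rho$; non-negativity and extended-real-valuedness of $G_3$ and $g_3$ are inherited from $G_1$ and $g_1$.

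For (ii), I would equip $K(X_1)$ with the Hausdorff metric and $X_3$ with a product metric. Properness of $K(X_1)$ follows by observing that any closed Hausdorff ball about $\Omega_0 \in K(X_1)$ of radius $r$ is contained in $K(\cBall(\Omega_0, r))$; since $X_1$ is proper, $\cBall(\Omega_0, r)$ is compact, so by Blaschke's selection theorem $K(\cBall(\Omega_0, r))$ is Hausdorff-compact, making the closed ball compact. Since $\mathbb{R}_{+}$ is also proper, so is $X_3$; and $U_3 = U_1$ is compact by hypothesis on $\Pi_1$. For compact-valuedness of $F_3$, note that $\cBall(u, \rho)$ is a closed, hence compact, subset of $U_1$, so $F_1(\Omega, \cBall(u, \rho))$ is compact (image of a compact set under the u.s.c., compact-valued $F_1$); any $\Omega'$ in the first factor of $F_3((\Omega, \rho), u)$ has diameter at most $\rho$ and is within gap-distance $\rho$ of this compact image, so lies inside the compact neighborhood $\cBall(F_1(\Omega, \cBall(u, \rho)), 2\rho)$. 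The first factor is a closed subset of the Hausdorff-compact set $K(\cBall(F_1(\Omega, \cBall(u, \rho)), 2\rho))$, hence compact.

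The main work lies in establishing u.s.c. of $F_3$, $G_3$, and $g_3$. The strategy is to close the graph of $F_3$ and the hypographs of $G_3$ and $g_3$, using two auxiliary facts: first, that the composite $(\Omega, u, \rho) \mapsto F_1(\Omega, \cBall(u, \rho))$ is u.s.c. with Hausdorff-valued outputs (by composing u.s.c. of $F_1$ with continuity of $(u, \rho) \mapsto \cBall(u, \rho)$ in the Hausdorff metric, invoking appendix stability results such as \ref{prop:hypoLimit:i13absoc}); and second, that $\Omega \mapsto \sup G_1(\Omega)$ and the analogous map for $g_1$ are u.s.c. on $K(X_1)$ (since $G_1$ is u.s.c. and attains its supremum on each compact $\Omega$ by \ref{cor:uscCompact}, and these suprema are upper-semicontinuous in the Hausdorff-varying argument). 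Given these, for each $u_n \to u_{\infty}$ and $(\Omega_n, \rho_n) \to (\Omega_{\infty}, \rho_{\infty})$ with $\Omega'_n \in F_3((\Omega_n, \rho_n), u_n)$ and $\Omega'_n \to \Omega'_{\infty}$, the conditions $\diam \Omega'_n \le \rho_n$ and $d(\Omega'_n, F_1(\Omega_n, \cBall(u_n, \rho_n))) \le \rho_n$ pass to the limit, placing $\Omega'_{\infty}$ in $F_3((\Omega_{\infty}, \rho_{\infty}), u_{\infty})$; analogous sequential arguments close the hypographs of $G_3$ and $g_3$.

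The main obstacle will be the nested role of $\rho$, which simultaneously sets the radius of the input ball $\cBall(u, \rho)$, caps the diameter of $\Omega'$, and appears as the gap tolerance in the constraint defining $F_3$. Passing to the limit in the inequality $d(\Omega', F_1(\Omega, \cBall(u, \rho))) \le \rho$ therefore requires a careful Hausdorff-limit argument in which the dependence on $\rho$ is handled in all three places at once; the appendix stability results (on u.s.c. maps under Hausdorff-convergent arguments and on the preservation of u.s.c. under suprema over compact sets) should make this routine once the closed-graph strategy is set up as above.
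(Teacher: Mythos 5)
Your proposal is correct and follows essentially the same route as the paper: verify well-definedness and strictness directly, obtain properness of $K(X_1)$ and compactness of $U_3$, establish upper semi-continuity of $G_3$ and $g_3$ via a Berge-type supremum theorem over usco maps, and close the graph of $F_3$ by a sequential argument using properness to extract a Hausdorff-convergent subsequence of the $\Omega'_k$. The only differences are cosmetic: the paper packages the limit passage for the constraint $d\bigl(\Omega', F_1(\Omega,\cBall(u,\rho))\bigr)\le\rho$ into a single auxiliary usco map $H(\Omega,u,\rho)=\cBall\bigl(F_1(\Omega,\cBall(u,\rho)),\rho\bigr)$ and invokes Proposition \ref{prop:uscCompact} (the result you actually need, rather than Proposition \ref{prop:hypoLimit:i13absoc}, which you cite by mistake), whereas you treat the diameter bound, the gap distance and the input ball separately; both handle the nested role of $\rho$ correctly.
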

\begin{proof}
$\Pi_3$ is clearly an optimal control problem by our
hypotheses, and in particular, $F_3$ is strict. Moreover, $U_3$ is
compact, and $K(X_1)$ is proper as $X_1$ is so.
In addition,
using Proposition \ref{prop:uscCompact} it is easily seen that
the maps $\alpha \colon K(X_1) \rightrightarrows X_1$ and
$\beta \colon U_1 \times \mathbb{R}_{+} \rightrightarrows U_1$
given by $\alpha(\Omega) = \Omega$ and $\beta(u,r) = \cBall(u,r)$
are u.s.c. and compact-valued, or \concept{usco} for short.
Then $G_3$ and $g_3$ are u.s.c. by Theorem
\ref{th:BergesMaximumTheorem}.

To show that $F_3$ is usco, define the map
$
H \colon K(X_1) \times U_1 \times \mathbb{R}_{+}
\rightrightarrows
X_1
$
by
$H(\Omega,u,\rho) = \beta(F_1(\alpha(\Omega),\beta(u,\rho)),\rho)$,
let
$(((\Omega_k,\rho_k),u_k),(\Omega_k',\rho_k))_{k \in \mathbb{N}}$ be a sequence
in the graph of $F_3$, and suppose that the sequences $\Omega$, $\rho$
and $u$ converge to $\Gamma \in K(X_1)$, $r \in \mathbb{R}_{+}$ and
$v \in U_1$, respectively.
Then $\Omega_k' \in K_{\rho_k}(X_1)$ for all $k$, and
since $F_1$ is usco, we also have 
$\Omega_k' \cap H(\Omega_k, u_k, \rho_k) \not= \emptyset$ for all
$k$.
Thus, there exists a sequence $(p_k)_{k \in \mathbb{N}}$
satisfying $p_k \in \Omega_k' \cap H(\Omega_k, u_k, \rho_k)$ for all
$k$, and by Proposition \ref{prop:uscCompact}, a
subsequence of $p$ converges to some $q \in H(\Gamma,v,r)$ since $H$
is usco.
We may assume that the whole sequence converges.
Then the sequence $\Omega'$ is bounded, and so may be assumed to
converge to some $\Gamma' \in K(X_1)$ since $K(X_1)$ is proper.
Additionally, $\Gamma' \in K_r(X_1)$ by the continuity of the map
$\diam$ on $K(X_1)$, and $q \in \Gamma'$. We conclude that
$(\Gamma',r) \in F_3((\Gamma,r),v)$, and so $F_3$ is usco by
Prop.~\ref{prop:uscCompact}.
\end{proof}

\begin{lemma}
\label{lem:EstimatesUsingAuxiliaryProblem}
Under the hypotheses and in the notation of Lemma
\ref{lem:AdmissibilityOfAuxiliaryProblem}, let
$\Pi_2$ be an abstraction of conservatism $\rho \in \mathbb{R}_{+}$
of $\Pi_1$, of the form \ref{e:two:ocp}.
Let $V_i$ denote the value function of $\Pi_i$, $i \in \{ 1, 2, 3 \}$,
and let $X_2' \subseteq X_2$ be the subset of cells $\Omega$ that
satisfy \ref{e:def:AbstractionOfPrecision}. Then the following holds:
\begin{enumerate}
\item
\label{lem:EstimatesUsingAuxiliaryProblem:i}
$V_1(p)= V_3(\{ p \}, 0)$ for all $p\in X_1$;
\item
\label{lem:EstimatesUsingAuxiliaryProblem:ii}
$V_2( \Omega ) \le V_3( \Omega, \rho)$
for all $\Omega \in X_2'$;
\item
\label{lem:EstimatesUsingAuxiliaryProblem:iii}
$V_2( \Omega ) \le V_3( \{ p \}, \rho)$
whenever $p \in \Omega \in X_2 \setminus X_2'$.
\qedhere
\end{enumerate}
\end{lemma}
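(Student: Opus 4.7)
My plan is to derive each of (i)--(iii) by constructing a suitable valuated alternating simulation relation and invoking Theorem~\ref{th:vASR_Bound_On_V}.

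For (i), I would set up mutual simulations between $\Pi_1$ and $\Pi_3$. With $\rho = 0$ the data of $\Pi_3$ simplify dramatically: $K_0(X_1)$ is the set of singletons and $\cBall(u,0) = \{u\}$, so that $F_3((\{p\},0),u) = \{(\{q\},0) : q \in F_1(p,u)\}$, $G_3(\{p\},0) = G_1(p)$, and $g_3((\{p\},0),(\{q\},0),u) = g_1(p,q,u)$. Consequently, the relation $Q_{13}(p) := \{(\{p\},0)\}$ and its partial inverse $Q_{31}((\{p\},0)) := \{p\}$ (undefined elsewhere) both turn the two defining conditions of Definition~\ref{def:valuatedAlternatingSimulationRelation} into identities upon choosing matching inputs; Theorem~\ref{th:vASR_Bound_On_V} then delivers both $V_1(p) \le V_3(\{p\},0)$ and $V_3(\{p\},0) \le V_1(p)$, which proves (i).

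For (ii) and (iii) I would define a single relation $Q \colon X_2 \rightrightarrows X_3$ by $Q(\Omega) = \{(\Omega,\rho)\}$ if $\Omega \in X_2'$ and $Q(\Omega) = \{(\{p\},\rho) : p \in \Omega\}$ if $\Omega \in X_2 \setminus X_2'$. The bad pairs can be disposed of quickly: if $\Omega \in X_2 \setminus X_2'$, then $G_1(\Omega) \cup g_1(\Omega, X_1, U_1) = \{\infty\}$ propagates through Proposition~\ref{prop:Abstraction} to $G_2(\Omega) = \infty$ and $g_2(\Omega, \cdot, \cdot) \equiv \infty$, whence $V_2(\Omega) = \infty$; similarly, for any $p \in \Omega$, $G_3(\{p\}, \rho) = \rho + G_1(p) = \infty$ and $g_3((\{p\}, \rho), \cdot, \cdot) \equiv \infty$, so $V_3(\{p\}, \rho) = \infty$. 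Hence (iii) reduces to $\infty \le \infty$, and condition~\ref{def:valuatedAlternatingSimulationRelation:dyn} of Definition~\ref{def:valuatedAlternatingSimulationRelation} is vacuous at pairs $(\Omega, (\{p\}, \rho)) \in Q$ with $\Omega \in X_2 \setminus X_2'$ since $g_3((\{p\}, \rho), \cdot, u)$ is unbounded on $F_3$. On a pair $(\Omega, (\Omega, \rho)) \in Q$ with $\Omega \in X_2'$, the terminal-cost condition is immediate from Definition~\ref{def:AbstractionOfPrecision}; for the dynamical condition under its boundedness premise and a given $u_3 \in U_1$, I would pick $u_2 \in U_2$ with $d(u_2, u_3) \le \rho$ and then establish $F_2(\Omega, u_2) \subseteq X_2'$. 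Once this inclusion holds, each $q_2 \in F_2(\Omega, u_2)$ is closed with $\diam q_2 \le \rho$ and thus compact in the proper space $X_1$, so $q_3 := (q_2, \rho)$ belongs to $F_3((\Omega, \rho), u_3)$ by Definition~\ref{def:AbstractionOfPrecision} (combining $d(q_2, F_1(\Omega, u_2)) \le \rho$ with $u_2 \in \cBall(u_3, \rho)$), lies in $Q(q_2)$, and the required cost bound follows from Definition~\ref{def:AbstractionOfPrecision} together with $u_2 \in \cBall(u_3, \rho)$.

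The main obstacle is the inclusion $F_2(\Omega, u_2) \subseteq X_2'$ under the boundedness premise, and I plan to establish it by contradiction using the compactness machinery of~\ref{h:Convergence}. First, since $P_2(0)(\Omega') = \infty$ for every $\Omega' \in X_2 \setminus X_2'$ (by the same propagation argument as above), boundedness of $(P_2(0)) \circ Q^{-1}$ on $F_3((\Omega, \rho), u_3)$ forces $d(p, F_1(\Omega, \cBall(u_3, \rho))) > \rho$ for every $p$ belonging to some cell in $X_2 \setminus X_2'$. On the other hand, a hypothetical $q_2 \in F_2(\Omega, u_2) \cap (X_2 \setminus X_2')$ would satisfy $d(q_2, F_1(\Omega, u_2)) \le \rho$ by Definition~\ref{def:AbstractionOfPrecision}, yielding sequences $x_n \in q_2$ and $y_n \in F_1(\Omega, u_2)$ with $d(x_n, y_n) \to d(q_2, F_1(\Omega, u_2)) \le \rho$. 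Compactness of $\Omega$ and of $\cBall(u_3, \rho)$ in the compact space $U_1$, together with Proposition~\ref{prop:uscCompact}, renders $F_1(\Omega, \cBall(u_3, \rho))$ compact and therefore bounded; this keeps $\{x_n\}$ bounded, and properness of $X_1$ with closedness of $q_2$ produces a subsequential limit $x^* \in q_2$ satisfying $d(x^*, F_1(\Omega, \cBall(u_3, \rho))) \le \rho$. Since $x^*$ lies in the bad cell $q_2 \in X_2 \setminus X_2'$, this contradicts the exclusion derived from the boundedness premise. Thus $F_2(\Omega, u_2) \subseteq X_2'$, the dynamical condition is verified, and Theorem~\ref{th:vASR_Bound_On_V} delivers (ii) and (iii).
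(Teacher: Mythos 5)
Your proposal is correct and follows essentially the same route as the paper: part~\ref{lem:EstimatesUsingAuxiliaryProblem:i} via the mutual relations $\Pi_1 \preccurlyeq^{\circ}_{Q} \Pi_3 \preccurlyeq^{\circ}_{Q^{-1}} \Pi_1$ with $Q(p)=(\{p\},0)$, and parts~\ref{lem:EstimatesUsingAuxiliaryProblem:ii}--\ref{lem:EstimatesUsingAuxiliaryProblem:iii} via the same two-case relation $Q\colon X_2 \rightrightarrows X_3$ and Theorem~\ref{th:vASR_Bound_On_V}. The only difference is organizational: where the paper takes an arbitrary $\Omega''\in F_2(\Omega,u_2)$ and shows that $\Omega''\notin X_2'$ forces $(P_2(0))\circ Q^{-1}$ to be unbounded on $F_3((\Omega,\rho),u_3)$, you run the contrapositive of that same compactness argument to conclude $F_2(\Omega,u_2)\subseteq X_2'$ under the boundedness premise.
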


\begin{proof}
We claim that
$\Pi_1 \preccurlyeq^{\circ}_{Q} \Pi_3 \preccurlyeq^{\circ}_{Q^{-1}} \Pi_1$ 
holds for the single-valued map
$Q \colon X_1 \rightrightarrows X_3$ given by
$Q(p) = ( \{ p \}, 0 )$.
Indeed, let $p\in X_1$ and $u \in U_3$. Then
$G_3( Q(p) ) = G_1(p)$ and
$g_3( Q(p), Q(q), u) = g_1(p,q,u)$ for all $q \in X_1$.
Moreover, $Q( F_1(p,u) ) = F_3( (\{ p \},0), u )$ as $F_1$ is
compact-valued. Thus, both conditions in Definition
\ref{def:valuatedAlternatingSimulationRelation} are met with $\Pi_3$
in place of $\Pi_2$, and they are also met with $\Pi_3$ and $\Pi_1$ in
place of $\Pi_1$ and $\Pi_2$, respectively. This proves our claim, and
\ref{lem:EstimatesUsingAuxiliaryProblem:i} follows from Theorem
\ref{th:vASR_Bound_On_V}.

To prove \ref{lem:EstimatesUsingAuxiliaryProblem:ii} and
\ref{lem:EstimatesUsingAuxiliaryProblem:iii} we shall show that
$\Pi_2 \preccurlyeq^{\circ}_{Q} \Pi_3$ holds for the relation
$Q \colon X_2 \rightrightarrows X_3$ given by
$Q( \Omega ) = \{ ( \Omega, \rho ) \}$ if $\Omega \in X_2'$, and by
$
Q( \Omega )
=
\Menge{ ( \{ p \}, \rho )}{p \in \Omega}
$,
otherwise.

Let $( \Omega, ( \Omega', \rho ) ) \in Q$ and
$u_3 \in U_3$. Then $\Omega' \subseteq \Omega$, and
additionally $( \Omega', \rho ) \in X_3$ as required since
$X_2' \subseteq K_{\rho}(X_1)$.
Moreover, the estimate $G_2( \Omega ) \le G_3( \Omega', \rho )$ is
immediate from Definition \ref{def:AbstractionOfPrecision}
if $\Omega \in X_2'$. It also holds if
$\Omega \in X_2 \setminus X_2'$, for then
\ref{e:def:AbstractionOfPrecision} is violated, which implies
$G_3(\Omega',\rho) = \infty$. Hence, the first requirement in
Definition \ref{def:valuatedAlternatingSimulationRelation} is met with
$\Pi_2$ and $\Pi_3$ in place of $\Pi_1$ and $\Pi_2$, respectively.

In our proof of the second requirement we may assume that the map
$g_3((\Omega',\rho),\cdot,u_3)$ is bounded on the set
$F_3((\Omega',\rho),u_3)$. Then
$g_1(\Omega', X_1, u_3) \not= \{ \infty \}$ by the definition of
$g_3$, and so $\Omega = \Omega' \in X_2'$. We next pick any
$u_2 \in \cBall(u_3,\rho) \cap U_2$, which is possible by
condition \ref{def:AbstractionOfPrecision:in} in Definition
\ref{def:AbstractionOfPrecision}, and any
$\Omega'' \in F_2(\Omega,u_2)$. Then the
condition \ref{def:AbstractionOfPrecision:iii} in
Def.~\ref{def:AbstractionOfPrecision} shows that
\begin{equation}
\label{e:th:UpperBoundAndConvergence:proof:1}
d( \Omega'', F_1( \Omega, \cBall(u_3,\rho) ) )
\le
\rho.
\end{equation}

If $\Omega'' \in X_2'$, then
$(\Omega'', \rho) \in F_3((\Omega,\rho),u_3) \cap Q( \Omega'' )$.
Moreover, the condition
\ref{def:AbstractionOfPrecision:runningCost} in Definition
\ref{def:AbstractionOfPrecision} with $\Omega''$ and $u_2$ in place of
$\Omega'$ and $u$, respectively, shows that
$
g_2(\Omega,\Omega'',u_2)
\le
g_3((\Omega,\rho),(\Omega'',\rho),u_3)
$, and we are done.
If, on the other hand, $\Omega'' \not\in X_2'$, then
$G_1( \Omega'' ) \cup g_1( \Omega'', X_1, U_1 ) = \{ \infty \}$, and
hence, $G_2( \Omega'' ) = \infty$ and 
$g_2( \Omega'', X_2, U_2 ) = \{ \infty \}$ by Proposition
\ref{prop:Abstraction}. This shows that
$(P_2(0))( \Omega'' ) = \infty$. Moreover,
$(\{ q \}, \rho) \in F_3((\Omega,\rho),u_3)$
for some $q \in \Omega''$
by \ref{e:th:UpperBoundAndConvergence:proof:1}
and a compactness argument.
Since  additionally
$(\{ q \}, \rho) \in Q (\Omega'')$ it follows that the map
$(P_2(0)) \circ Q^{-1}$ is not bounded on the set
$F_3((\Omega,\rho),u_3)$, which completes our proof.
\end{proof}

\begin{proof}[\proofname{} of Theorem \ref{th:UpperBoundAndConvergence}]
The first claim of the theorem directly follows from
Def.~\ref{def:Abstraction}, Prop.~\ref{prop:vFRR_is_also_sASR}, and
Th.~\ref{th:vASR_Bound_On_V}.
To prove the second claim, let
$\varepsilon > 0$, $p \in X$ and $\rho > 0$,
let $\Pi_i$, $V_i$ and $X_2'$ be as in Lemmas
\ref{lem:AdmissibilityOfAuxiliaryProblem} and
\ref{lem:EstimatesUsingAuxiliaryProblem}, $i \in \{ 1,2,3 \}$,
let $N = \oBall(p,\varepsilon) \subseteq X_1$,
and let $W$ be the pointwise upper bound of $V_2$.
If \ref{e:th:UpperBoundAndConvergence} does not hold with
$V_1$ in place of $V$, then there exists $x \in N$ satisfying
$V_1(p) + \varepsilon / 2 < W(x)$. Then
$V_1(p) + \varepsilon / 2 < V_2( \Omega' )$ for some
$\Omega' \in X_2$ containing $x$, by the definition of $W$, and
$V_2( \Omega' ) \le V_3( \Omega, \rho )$
for some $\Omega \in K_{\rho}( X_1 )$ containing $x$, by
Lemma \ref{lem:EstimatesUsingAuxiliaryProblem}; specifically,
$\Omega = \Omega'$ if $\Omega' \in X_2'$, and
$\Omega = \{ x \}$, otherwise.

We conclude that, if the second claim of the theorem does not hold,
then there exist $\varepsilon > 0$, $p \in X$ and a sequence
$(\Omega_k)_{k \in \mathbb{N}}$ in $K(X_1)$ converging to $\{ p \}$
such that $V_1(p) + \varepsilon / 2 < V_3(\Omega_k,1/k)$ for all
$k \in \mathbb{N}$.
On the other hand, $V_3$ is u.s.c. by Lemma
\ref{lem:AdmissibilityOfAuxiliaryProblem} and Corollary
\ref{cor:th:OptimalityPrinciples:MAX}, and this together with
Lemma
\ref{lem:EstimatesUsingAuxiliaryProblem}\ref{lem:EstimatesUsingAuxiliaryProblem:i}
shows that
$\limsup_{k \to \infty} V_3(\Omega_k,1/k) \le V_1(p)$, which is a
contradiction.
\end{proof}

\subsection{Convergence of the closed-loop performance to the concrete value function}
\label{ss:AlgorithmicSolution:OverallMethod}

Finally, we will demonstrate that the performance of the concrete
closed-loop in \ref{fig:closedLoopVFRR} converges to the concrete
value function, in which we use the following notion of convergence;
see
\cite{RockafellarWets09,HuPapageorgiou97.i}
and Proposition \ref{prop:hypoLimit:i13absoc} in the Appendix.

\begin{definition}
\label{def:hypoLimit:i13absoc}
Let the map $V \colon X \to \mathbb{R}_{+} \cup \{ \infty \}$ be
u.s.c. on the metric space $X$, and let
$L_i \colon X \to \mathbb{R}_{+} \cup \{ \infty \}$ satisfy
$L_i \ge V$, for all $i \in \mathbb{N}$.
Then the sequence $(L_i)_{i \in \mathbb{N}}$ \concept{hypo-converges}
to $V$, denoted $V = \hypolim_{i \to \infty} L_i$, if the following
condition holds.
For every $p \in X$ and every $\varepsilon > 0$ there exist a
neighborhood $N \subseteq X$ of $p$ such that the inclusion
\begin{equation}
\label{e:def:hypoLimit:i13absoc}
( N \times \mathbb{R} )
\cap
\hypo L_i
\subseteq
\oBall( \hypo V, \varepsilon )
\end{equation}
holds for all sufficiently large $i \in \mathbb{N}$.
\end{definition}

In addition to hypothesis \ref{h:Convergence}, throughout the rest of
this section we shall assume the following.

\begin{hypothesisA}
\label{h:ss:AlgorithmicSolution:OverallMethod}
\begin{enumerate}
\item
For every $i \in \mathbb{N}$,
$\Pi_i$ is an abstraction of conservatism
$\rho_i \in \mathbb{R}_{+} \cup \{ \infty \}$
of \ref{e:OCP}, of
the form \ref{e:two:ocp}, $C_i$ is an optimal controller for
$\Pi_i$, and $L_i$ is the closed-loop value function
of \ref{e:OCP} associated with $C_i \circ \mathop{\in}$, where
$\mathop{\in} \colon X \rightrightarrows X_i$ is the membership
relation and $\lim_{i \to \infty} \rho_i = 0$.
\item
$V$ is the value function of \ref{e:OCP}.
\end{enumerate}
\end{hypothesisA}

Here, $C_i \in \mathcal{F}(X_i,U_i)$ is an
\concept{optimal controller} for $\Pi_i$ if the value function of
$\Pi_i$ coincides with the closed-loop value function of $\Pi_i$
associated with $C_i$, i.e., if $C_i$ realizes the achievable
performance of the abstract closed-loop.
As detailed in Section \ref{s:AlgorithmicSolution}, optimal abstract
controllers exist whenever abstractions are finite, and finite,
arbitrarily precise abstractions can actually be computed
in the case of sampled-data control system dynamics.

We are now ready to present our second main result.

\begin{theorem}
\label{th:HypoConvergence}
Assume \ref{h:Convergence}, \ref{h:ss:AlgorithmicSolution:OverallMethod}.
Then $\hypolim\limits_{i \to \infty} L_i = V$.
\end{theorem}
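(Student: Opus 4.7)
The strategy is to verify the three ingredients of Definition~\ref{def:hypoLimit:i13absoc}: that $V$ is u.s.c., that $L_i \ge V$ for every $i$, and that the hypograph inclusion~\ref{e:def:hypoLimit:i13absoc} eventually holds on some neighborhood of each point. Upper semi-continuity of $V$ follows from Corollary~\ref{cor:th:OptimalityPrinciples:MAX} under~\ref{h:Convergence}. The pointwise bound $L_i \ge V$ is immediate from the definition~\ref{e:TimeDiscreteValueFunction:V} of $V$ as an infimum over all controllers, once one observes that $C_i \circ \mathop{\in} \in \mathcal{F}(X,U)$ by Theorem~\ref{th:ControllerRefinement} and Definition~\ref{def:Abstraction}.

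For the hypograph inclusion, first combine the optimality of $C_i$ for $\Pi_i$ with Corollary~\ref{cor:th:ControllerRefinement}: since the closed-loop value function of $\Pi_i$ under $C_i$ coincides with the abstract value function $V_i$ (by optimality of $C_i$), and $\Pi \preccurlyeq_{\in} \Pi_i$ by Definition~\ref{def:Abstraction}, one obtains $L_i \le \hat V_i^{(\in)}$, the pointwise upper bound of $V_i$ in the sense of Definition~\ref{def:PointwiseUpperBound}. Hence $\hypo L_i \subseteq \hypo \hat V_i^{(\in)}$, and it suffices to prove the hypograph inclusion for the family $(\hat V_i^{(\in)})_{i \in \mathbb{N}}$.

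To that end, I would revisit the auxiliary optimal control problem $\Pi_3$ from Lemma~\ref{lem:AdmissibilityOfAuxiliaryProblem}. Fix $p \in X$ and $\varepsilon > 0$, and suppose toward contradiction that the inclusion~\ref{e:def:hypoLimit:i13absoc} fails for $(\hat V_i^{(\in)})$ at $(p, \varepsilon)$. Then there exist indices $i_k \to \infty$ and points $x_k \to p$ with $\hat V_{i_k}^{(\in)}(x_k) > V(p) + \varepsilon / 2$; choose $\Omega_k \in X_{i_k}$ with $x_k \in \Omega_k$ realizing this. Lemma~\ref{lem:EstimatesUsingAuxiliaryProblem}\ref{lem:EstimatesUsingAuxiliaryProblem:ii}--\ref{lem:EstimatesUsingAuxiliaryProblem:iii} then supplies a sequence $(\Omega_k', \rho_{i_k})$ in the state space of $\Pi_3$ with $x_k \in \Omega_k'$, $\diam \Omega_k' \le \rho_{i_k}$, and $V_{i_k}(\Omega_k) \le V_3(\Omega_k', \rho_{i_k})$. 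Since $\rho_{i_k} \to 0$ and $x_k \to p$, the compact sets $\Omega_k'$ converge to $\{p\}$ in the Hausdorff metric; combining this with the u.s.c.\ of $V_3$ (obtained by applying Corollary~\ref{cor:th:OptimalityPrinciples:MAX} to $\Pi_3$, via Lemma~\ref{lem:AdmissibilityOfAuxiliaryProblem}) and with Lemma~\ref{lem:EstimatesUsingAuxiliaryProblem}\ref{lem:EstimatesUsingAuxiliaryProblem:i} yields $\limsup_k V_3(\Omega_k', \rho_{i_k}) \le V_3(\{p\}, 0) = V(p)$, contradicting $V(p) + \varepsilon/2 < V_3(\Omega_k', \rho_{i_k})$.

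The principal obstacle is moving from the single-$\rho$ formulation of Theorem~\ref{th:UpperBoundAndConvergence} to a statement uniform along the sequence $\rho_i \to 0$, which is why a direct citation of that theorem is insufficient and one must instead reuse the auxiliary problem $\Pi_3$. That problem is designed precisely to absorb abstractions of arbitrary conservatism simultaneously, so the u.s.c.\ of $V_3$ at $(\{p\}, 0)$ delivers the required limit along the whole sequence, converting the single-$\rho$ approximation mechanism of Theorem~\ref{th:UpperBoundAndConvergence} into the sequential hypographical approximation demanded by Definition~\ref{def:hypoLimit:i13absoc}.
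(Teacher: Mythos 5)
Your proposal is correct and in substance coincides with the paper's proof: the paper likewise reduces the claim to $L_i \le \hat W_i^{(\in)}$ (via optimality of $C_i$ and Corollary~\ref{cor:th:ControllerRefinement}) together with the hypographical approximation of $V$ by the pointwise upper bounds of the abstract value functions, the only difference being that the paper cites Theorem~\ref{th:UpperBoundAndConvergence} as a black box where you inline its proof through $\Pi_3$ and Lemmas~\ref{lem:AdmissibilityOfAuxiliaryProblem}--\ref{lem:EstimatesUsingAuxiliaryProblem}. The ``principal obstacle'' you name is not actually an obstacle: every condition in Definition~\ref{def:AbstractionOfPrecision} is monotone in $\rho$ --- in particular $U_1=\cBall(U_2,\rho')$ with $\rho'\le\rho$ already forces $U_1=\cBall(U_2,\rho)$, since $\cBall(U_2,\rho')\subseteq\cBall(U_2,\rho)\subseteq U_1$ when balls are taken in the metric space $U_1$ --- so an abstraction of conservatism $\rho_i\le\rho$ is also one of conservatism $\rho$, and the single-$\rho$ statement of Theorem~\ref{th:UpperBoundAndConvergence} applies verbatim to all sufficiently large $i$.
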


\begin{proof}
Obviously, $L_i \ge V$ for all $i$, $X$
is a metric space, and $V$ is u.s.c. by Corollary
\ref{cor:th:OptimalityPrinciples:MAX}. Let $W_i$ be the value function
of $\Pi_i$, and let $p \in X$ and $\varepsilon > 0$.
By Theorem \ref{th:UpperBoundAndConvergence} there exists a
neighborhood $N \subseteq X$ of $p$ such that
$
( N \times \mathbb{R} )
\cap
\hypo \hat W_i^{(\in)}
\subseteq
\oBall( \hypo V, \varepsilon )
$
holds for all sufficiently large $i \in \mathbb{N}$.
Then, since $L_i \le \hat W_i^{(\in)}$ for all $i$ by
Corollary
\ref{cor:th:ControllerRefinement}, the requirement in Definition
\ref{def:hypoLimit:i13absoc} is satisfied.
\end{proof}

The theorem implies that the concrete value function $V$
is uniformly approximated on compact sets by the actual closed-loop
performances $L_i$. Specifically, for every $\varepsilon > 0$ and
every compact subset $N \subseteq X$ the inclusion
\ref{e:def:hypoLimit:i13absoc} holds for all sufficiently large
$i \in \mathbb{N}$. See also \ref{fig:hypoConv}.
Theorem \ref{th:HypoConvergence} also implies
pointwise convergence, and it even implies uniform convergence on any
set on which such a strong convergence property can possibly be expected:

\begin{corollary}
\label{cor:th:HypoConvergence}
Assume \ref{h:Convergence} and \ref{h:ss:AlgorithmicSolution:OverallMethod}.
Then we have
\begin{equation}
\label{e:PointwisConvergence}
V(p) = \lim_{i \to \infty} L_i(p)
\;\;\;\text{for all $p \in X$},
\end{equation}
and the following holds for every compact subset
$N \subseteq X$.
\begin{enumerate}
\item
\label{cor:th:HypoConvergence:Bound}
For every $\varepsilon > 0$ and all sufficiently large
$i \in \mathbb{N}$ we have
$\sup L_i(N) \le \varepsilon + \sup V(N)$.
\item
\label{cor:th:HypoConvergence:UniformConvergence}
If $V$ is real-valued on $N$, then $\sup V(N) < \infty$, and
if $V$ is additionally continuous on $N$, then the convergence in
\ref{e:PointwisConvergence} is uniform with respect to $p \in N$.
\qedhere
\end{enumerate}
\end{corollary}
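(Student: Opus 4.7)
The plan is to derive all three assertions from the hypo-convergence conclusion of Theorem~\ref{th:HypoConvergence} together with upper semi-continuity of $V$ from Corollary~\ref{cor:th:OptimalityPrinciples:MAX}. I will read the inclusion~\ref{e:def:hypoLimit:i13absoc} in the following concrete way: whenever $(p,\gamma)\in\hypo L_i$ and $p$ lies in the designated neighborhood, there must exist $(q',\gamma')\in\hypo V$ with $d(p,q')<\varepsilon$ and $\gamma'>\gamma-\varepsilon$, so in particular $V(q')\ge\gamma'>\gamma-\varepsilon$. Every step below will use only this extraction, compactness, and semi-continuity of $V$.

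For the pointwise limit~\ref{e:PointwisConvergence} at a fixed $p\in X$, the estimate $V(p)\le\liminf_i L_i(p)$ is immediate from the standing $L_i\ge V$, so only $\limsup_i L_i(p)\le V(p)$ is at stake; the case $V(p)=\infty$ is trivial. For $V(p)<\infty$ and $\varepsilon>0$, upper semi-continuity of $V$ at $p$ supplies $\delta>0$ with $V<V(p)+\varepsilon/3$ on $\oBall(p,\delta)$; I would then invoke Theorem~\ref{th:HypoConvergence} with tolerance smaller than both $\varepsilon/3$ and $\delta$ and feed $(p,V(p)+\varepsilon)\in\hypo L_i$ through the extraction to produce $q'\in\oBall(p,\delta)$ with $V(q')>V(p)+2\varepsilon/3$, contradicting the choice of $\delta$. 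Part~\ref{cor:th:HypoConvergence:Bound} is trivial when $\sup V(N)=\infty$; otherwise I would argue by contradiction, use compactness of $N$ to extract a convergent subsequence $q_{i_k}\to q\in N$ of witnesses violating the bound, run hypo-convergence at the limit $q$ (where $V(q)\le\sup V(N)$) with tolerance small enough to exploit upper semi-continuity of $V$ at $q$, and derive the contradiction by exactly the same extraction.

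For part~\ref{cor:th:HypoConvergence:UniformConvergence}, the bound $\sup V(N)<\infty$ is the standard extremum statement for an u.s.c. real-valued function on a compact set. Assuming continuity of $V$ on $N$, the main technical step will be to obtain a uniform modulus around $N$: for each $p\in N$ pick $\delta_p>0$ with $|V-V(p)|<\varepsilon/3$ on $\oBall(p,2\delta_p)$, select a finite subcover $N\subseteq\bigcup_j\oBall(p_j,\delta_{p_j})$, and put $\delta:=\min_j\delta_{p_j}$; a triangle inequality then gives $V(q)<V(p)+2\varepsilon/3$ uniformly in $p\in N$ for $d(p,q)<\delta$. A parallel finite-subcover application of Theorem~\ref{th:HypoConvergence} on $N$ will produce a single $i_0$ for which the previous extraction argument yields $L_i(p)\le V(p)+\varepsilon$ for all $p\in N$ and $i\ge i_0$, completing the uniform convergence in view of $L_i\ge V$. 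I expect the main obstacle to be precisely the patching of the local hypo-convergence into a statement uniform over $N$; compactness handles this cleanly once the uniform modulus is in place.
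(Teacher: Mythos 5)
Your argument is sound and reaches all three assertions, but it takes a somewhat different route from the paper's. The paper's proof is a three-line contradiction argument: if \ref{cor:th:HypoConvergence:Bound} fails, compactness of $N$ yields a sequence $(x_i)$ in $N$ converging to some $p\in N$ with $L_i(x_i)>\varepsilon+\sup V(N)\ge\varepsilon+V(p)$ for infinitely many $i$, which contradicts the sequential characterization of hypo-convergence in Proposition~\ref{prop:hypoLimit:i13absoc} ($\limsup_i L_i(x_i)\le V(p)$ along every sequence $x_i\to p$); the uniform-convergence claim is the same argument run with $L_i(x_i)>\varepsilon+V(x_i)$ and continuity of $V|_N$, and \ref{e:PointwisConvergence} is the case $N=\{p\}$. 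You instead unpack the neighborhood inclusion \ref{e:def:hypoLimit:i13absoc} by hand — extracting, for each point of $\hypo L_i$ over $N$, a nearby point of $\hypo V$ — which amounts to re-deriving the necessity half of Proposition~\ref{prop:hypoLimit:i13absoc} inline, and you replace the sequential argument for uniformity by a finite-subcover/uniform-modulus construction. Both work; the paper's version is shorter because the sequential lemma is already available in the appendix, while yours is self-contained with respect to it.

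One point needs care in part~\ref{cor:th:HypoConvergence:UniformConvergence}: the hypothesis is only that $V$ is continuous \emph{on} $N$, i.e.\ that $V|_N$ is continuous, whereas your choice of $\delta_p$ with $|V-V(p)|<\varepsilon/3$ on $\oBall(p,2\delta_p)$ presupposes two-sided continuity on a full neighborhood of $p$ in $X$. The points $q'$ you extract from the hypograph inclusion need not lie in $N$, so a modulus valid only on $N$ does not control $V(q')$. The fix is to split the two roles: use upper semi-continuity of $V$ on all of $X$ (Corollary~\ref{cor:th:OptimalityPrinciples:MAX}) to get the one-sided bound $V(q)<V(p_j)+\varepsilon/3$ on the full ball $\oBall(p_j,2\delta_{p_j})$ around each cover center $p_j$, and use continuity of $V|_N$ only to transfer $V(p_j)$ to $V(p)$ for the nearby $p\in N$. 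With that adjustment your covering argument closes; the paper's sequential formulation sidesteps the issue automatically because its witnesses $x_i$ all lie in $N$.
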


\begin{proof}
If \ref{cor:th:HypoConvergence:Bound}
does not hold, then there exist
$\varepsilon > 0$, $p \in N$ and a sequence
$(x_i)_{i \in \mathbb{N}}$ in $N$ converging to $p$ and
satisfying $L_i(x_i) > \varepsilon + \sup V(N)$
for infinitely many $i \in \mathbb{N}$. This implies
$\limsup_{i \to \infty} L_i(x_i) > V(p)$, which contradicts
Proposition \ref{prop:hypoLimit:i13absoc}.
The same argument with the inequality
$L_i(x_i) > \varepsilon + V(x_i)$ proves the second claim in
\ref{cor:th:HypoConvergence:UniformConvergence}, and the first claim
follows since $V$ is u.s.c. by Corollary
\ref{cor:th:OptimalityPrinciples:MAX}, and so
$V(N) \subseteq \mathbb{R}$ implies $\sup V(N) < \infty$.
Finally, the identity \ref{e:PointwisConvergence} follows from the
estimate $V \le L_i$ and the special case $N = \{ p \}$ of
\ref{cor:th:HypoConvergence:Bound}.
\end{proof}

An interesting special case arises when the cost functions
\ref{e:costfunction} map into the discrete set
\begin{equation}
\label{e:DiscreteCosts}
D = \lambda \mathbb{Z}_{+} \cup \{ \infty \}
\end{equation}
for some $\lambda \in \mathbb{R}_{+}$, in which the subcase
$\lambda = 0$, or equivalently, $D = \{ 0, \infty \}$, corresponds to
qualitative problems.
Then, without loss of generality, all abstract cost functions map into
the set \ref{e:DiscreteCosts} either.
We would like to explicitly spell out this case, which
includes, e.g. the Reach-Avoid Problem and the Minimum Time
Problem in Examples \ref{ex:ReachAvoid} and \ref{ex:MinTime}:

\begin{corollary}
\label{cor:th:QualitativeCompleteness}
Assume \ref{h:Convergence} and \ref{h:ss:AlgorithmicSolution:OverallMethod}.
Suppose that both the concrete cost functions $g$ and $G$
and the abstract cost functions $g_i$ and $G_i$ map into the set
\ref{e:DiscreteCosts}, for some $\lambda \in \mathbb{R}_{+}$ and 
every $i \in \mathbb{N}$.
\\
Then for every compact subset $N \subseteq X$ we have
$\sup L_i(N) = \sup V(N)$ for all sufficiently large
$i \in \mathbb{N}$.
In particular, if $\lambda = 0$ and $V$ vanishes on some compact
subset $N \subseteq X$, so does $L_i$
for all sufficiently large $i \in \mathbb{N}$.
\end{corollary}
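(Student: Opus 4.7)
The plan is to combine the quantitative $\varepsilon$-bound provided by Corollary~\ref{cor:th:HypoConvergence}\ref{cor:th:HypoConvergence:Bound} with the discreteness of the set $D = \lambda \mathbb{Z}_+ \cup \{\infty\}$ in order to promote an approximate inequality into an exact identity.

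First I would verify that both $V$ and each $L_i$ take values in $D$ under the additional hypothesis. Indeed, by the definition \ref{e:cfunctional} of $J$, every value $J(u,v,x)$ is either a finite sum of a terminal cost and running costs, all of which lie in $D$ by hypothesis (and $D$ is closed under finite sums), or it is $\infty \in D$; so $J$ maps into $D$. Since any non-empty subset of $D$ admits both an infimum and a supremum that again lie in $D$ (a bounded non-empty subset of $\lambda \mathbb{Z}_+$ is finite when $\lambda > 0$, and for $\lambda = 0$ the set $D = \{0,\infty\}$ is trivially closed under these operations), the definitions \ref{e:TimeDiscreteValueFunction:V} and \ref{e:ClosedLoopValueFunction:L} immediately yield $V(p), L_i(p) \in D$ for every $p \in X$. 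The same discreteness argument applied to the image sets $V(N)$ and $L_i(N)$ shows that $\sup V(N), \sup L_i(N) \in D$ for any $N \subseteq X$.

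Second I would apply Corollary~\ref{cor:th:HypoConvergence}\ref{cor:th:HypoConvergence:Bound} with a carefully chosen $\varepsilon$. For every $\varepsilon > 0$ and all sufficiently large $i$, one has $\sup L_i(N) \le \varepsilon + \sup V(N)$. Combined with the elementary bound $V \le L_i$, which holds because $L_i$ is the closed-loop value function associated with one specific controller whereas $V$ is the infimum over all controllers, this sandwiches
\[
\sup V(N) \le \sup L_i(N) \le \sup V(N) + \varepsilon
\]
for all sufficiently large $i$. If $\sup V(N) = \infty$, equality is immediate. Otherwise $\sup V(N) \in \lambda \mathbb{Z}_+$ and $\sup L_i(N) \in D$ with $\sup L_i(N) < \infty$, so $\sup L_i(N) \in \lambda \mathbb{Z}_+$ as well; choosing $\varepsilon < \lambda$ when $\lambda > 0$, or any $\varepsilon > 0$ when $\lambda = 0$ (in which subcase necessarily $\sup V(N) = 0$), forces $\sup L_i(N) = \sup V(N)$ by discreteness of the grid.

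The ``in particular'' assertion follows as the specialization to $\lambda = 0$ with $\sup V(N) = 0$: since $L_i$ is $\{0,\infty\}$-valued and $\sup L_i(N) = 0$, the function $L_i$ vanishes pointwise on $N$. The only non-routine step is the preliminary verification that $V$ and $L_i$ inherit $D$-valuedness from the cost data; once that discreteness is in hand, the reduction of the $\varepsilon$-approximation to an exact identity is essentially automatic, and the main obstacle will simply be treating the edge case $\lambda = 0$ carefully, where ``smaller than the grid spacing'' must be reinterpreted as ``any positive $\varepsilon$''.
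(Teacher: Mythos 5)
Your proof is correct and follows exactly the route the paper intends: the authors state Corollary~\ref{cor:th:QualitativeCompleteness} without a written proof, treating it as an immediate consequence of Corollary~\ref{cor:th:HypoConvergence}\ref{cor:th:HypoConvergence:Bound} combined with the $D$-valuedness of $V$ and $L_i$, which is precisely your argument. Your preliminary verification that $J$, and hence $V$ and each $L_i$, inherit values in $D = \lambda\mathbb{Z}_{+}\cup\{\infty\}$, and your careful handling of the $\lambda=0$ subcase, correctly fill in the details the paper leaves implicit.
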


Assertion \ref{cor:th:HypoConvergence:Bound} in Corollary
\ref{cor:th:HypoConvergence} and Corollary
\ref{cor:th:QualitativeCompleteness} can be seen as a completeness
results.
Indeed, if for every initial state in a compact
subset $N \subseteq X$ the achievable closed-loop performance for
\ref{e:OCP} is finite, then using sufficiently precise abstractions
it is possible to synthesize controllers for
\ref{e:OCP} whose worst-case performance gaps on $N$ are arbitrarily small.
In  particular,
we obtain controllers to solve qualitative
problems on the whole of $N$ whenever such controllers exist. This is
in contrast with somewhat related results from the literature.
Specifically, there is a method that, given a qualitative control
problem and some perturbation of that problem, returns
either a solution to the former problem in the form of a controller,
or a proof that the latter problem is not solvable
\cite[Cor.~2]{Liu17}. Analogous results for verification problems
appear in \cite{KongGaoChenClarke15}.
While the method does apply to arbitrarily small perturbations,
it is not guaranteed, by the theory in \cite{Liu17},
to ever return a controller
even if the original, unperturbed problem is solvable.

\section{Algorithmic Solution}
\label{s:AlgorithmicSolution}

The practical applicability of our main results in Section
\ref{s:convergence} depends on our ability to both compute finite
abstractions of arbitrary conservatism and solve finite optimal control
problems.
For the sake of self-consistency of the present paper, we shall
discuss both issues, where for the former problem we focus on our
solution in \cite{i17conv} for a class of optimal control problems
arising in the context of sampled-data control systems.
Using e.g.~the method from \cite[Sec.~8.2]{DellnitzJunge02},
it is straightforward to adapt our solution to the simpler case where
the transition function of the plant is given explicitly, rather than
implicitly through sampling a continuous-time system.

\subsection{A sampled optimal control problem}
\label{ss:AlgorithmicSolution:SampledOCP}

We introduce a class of optimal control problems for which we devised an algorithm
in \cite{i17conv} to compute finite abstractions of arbitrary conservatism.
The discrete-time plant represents the sampled behavior of a continuous-time control
system, which we describe by a nonlinear differential equation with additive,
bounded disturbances of the form
\begin{equation}
\label{e:System:c-time}
  \dot x \in f(x,u) + \segcc{-w,w}
\end{equation}
where $f \colon \mathbb{R}^n\times U\to \mathbb{R}^n$, $U\subseteq \mathbb{R}^m$,
and $w\in\mathbb{R}_{+}^n$.
Here, the summation in \ref{e:System:c-time} is interpreted as the Minkowski set
addition \cite{RockafellarWets09}, and $\segcc{-w,w}$ denotes a \concept{hyper interval} in
$\mathbb R^n$ given by
$\segcc{-w,w}=\intcc{-w_1,w_1}\times\ldots \times \intcc{-w_n,w_n}$.
Given an input signal $u \colon J \subseteq{\mathbb{R}} \to U$,
a locally absolutely continuous map
$\xi \colon I \to \mathbb{R}^n$ is a
\concept{solution of \ref{e:System:c-time} on $I$ generated by $u$} if
$I \subseteq J$ is an interval and $\dot \xi(t) \in f(\xi(t),u(t)) + \segcc{-w,w}$ holds
for almost every $t \in I$. Whenever $u$ is constant on $I$ with value $\bar u\in U$, we slightly
abuse the language and refer to $\xi$ as a solution of \ref{e:System:c-time} on $I$
generated by~$\bar u$.

We consider the following optimal control problem associated with the
sampled behavior of~\ref{e:System:c-time}.

\begin{definition}
\label{d:sampledOCP}
Given a sampling time  $\tau > 0$ and cost functions 
\begin{IEEEeqnarray*}{c'c}
  g_1 \colon
  \mathbb{R}^n \times \mathbb{R}^n \times U \to \mathbb{R}_{+} \cup \{ \infty \},
  & 
  G_1 \colon \mathbb{R}^n \to \mathbb{R}_{+} \cup \{ \infty \},
\end{IEEEeqnarray*}
the tuple $\Pi_1=(X_1,U_1,F_1,G_1, g_1)$ is the
\concept{optimal control problem associated with \ref{e:System:c-time}
  and $\tau$}, where
$X_1=\mathbb{R}^n$, $U_1=U$,
and $F_1 \colon X_1 \times U_1 \rightrightarrows X_1$ is implicitly defined by
$x'\in F_1(x,u)$ iff there exists a solution $\xi$ of \ref{e:System:c-time} on $\intcc{0,\tau}$
generated by $u \in U$ that satisfies $\xi(0)=x$ and $\xi(\tau)= x'$.
\end{definition}

The following hypothesis ensures that $\Pi_1$ is actually an
optimal control problem in the sense of Definition~\ref{d:OST} that
additionally satisfies Hypothesis \ref{h:Convergence}, i.e., a problem
to which our results in Section \ref{s:convergence} apply.

\begin{hypothesisA}
  \label{h:sampledOCP}
  The input set satisfies $U=\cup_{i\in \intcc{1;l}} \segcc{\check u_i,\hat u_i}$, with $\check u_i,\hat
  u_i\in\mathbb R^m$, $\check u_i\le \hat u_i$,
  and $l \in \mathbb{N}$.
  The function $G_1$ and $g_1$ is continuous on the set
  $G_1^{-1}(\mathbb{R})$ and $g_1^{-1}(\mathbb R)$, respectively, and
  these sets are open.
  The map $f$ is continuous, and for all $i,j\in \intcc{1;n}$, the partial
derivative $D_jf_i$ with respect to the $j$th
component of the first argument of $f_i$ exists and is continuous.
  Every solution $\xi$
  of~\ref{e:System:c-time} on $\intcc{0,s}$ generated by some $u \in U$,
  where $s < \tau$,
  can be extended to a solution on $\intcc{0,\tau}$ generated by $u$.
\end{hypothesisA}

\begin{lemma}[Lemma 1 \cite{i17conv}]
\label{l:sampledOCP}
Consider an  optimal control problem
$\Pi_1=(X_1,U_1,F_1,G_1,g_1)$ associated
with~\ref{e:System:c-time} and $\tau>0$ and suppose that \ref{h:sampledOCP}
holds. Then $\Pi_1$ is an optimal control problem that
satisfies \ref{h:Convergence}.
\end{lemma}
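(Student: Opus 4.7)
The plan is to verify, in order, the three requirements: that $\Pi_1$ is an optimal control problem in the sense of Definition \ref{d:OST}, that $X_1$, $U_1$ have the right topological properties, and that $g_1$, $G_1$, $F_1$ are upper semi-continuous with $F_1$ compact-valued. The first requirement reduces to showing that $F_1$ is strict. By the continuity of $f$ and Peano's theorem, for every $(x,u) \in X_1 \times U_1$ there exists a solution $\xi$ of \ref{e:System:c-time} with $\xi(0)=x$ generated by $u$, on some interval $\intcc{0,s}$ with $s > 0$; the extension clause in \ref{h:sampledOCP} then provides a solution on $\intcc{0,\tau}$, so $F_1(x,u) \neq \emptyset$.

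The topological conditions are routine: $X_1 = \mathbb{R}^n$ is a proper metric space, and $U_1 = U$ is compact, being a finite union of closed bounded hyper-intervals. The upper semi-continuity of $G_1$ (and, identically, of $g_1$) is a direct consequence of the structure imposed in \ref{h:sampledOCP}: if $p_k \to p$ with $G_1(p) < \infty$, then $p \in G_1^{-1}(\mathbb{R})$, which is open, so $p_k$ eventually lies in $G_1^{-1}(\mathbb{R})$ and the continuity of $G_1$ there gives $G_1(p_k) \to G_1(p)$; if $G_1(p) = \infty$ the bound $\limsup_k G_1(p_k) \leq \infty$ is trivial.

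The main obstacle, and the heart of the argument, is to establish that $F_1$ is compact-valued and upper semi-continuous. I would handle both simultaneously by a closure-of-reachable-set argument. Given a bounded sequence $(x_k, u_k) \to (x,u)$ in $X_1 \times U_1$ and $y_k \in F_1(x_k, u_k)$, choose solutions $\xi_k$ of \ref{e:System:c-time} on $\intcc{0,\tau}$ generated by the constant input $u_k$ with $\xi_k(0) = x_k$ and $\xi_k(\tau) = y_k$. Since $f$ is $C^1$ in its state argument and $U$ is compact, a Gronwall-type estimate yields a single compact set $K \subseteq \mathbb{R}^n$ containing the traces of all such $\xi_k$ for $(x_k,u_k)$ in any fixed bounded neighborhood of $(x,u)$. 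On $K \times U$ the map $f$ is bounded, so together with the bounded disturbance $\segcc{-w,w}$ the derivatives $\dot \xi_k$ are uniformly bounded, making $(\xi_k)$ equicontinuous and uniformly bounded. By Ascoli--Arzelà a subsequence converges uniformly to a locally absolutely continuous $\xi$ satisfying $\xi(0)=x$ and $\xi(\tau) = \lim y_k$, and a standard closure argument for differential inclusions with a convex-compact-valued, upper semi-continuous right-hand side shows that $\xi$ is a solution of \ref{e:System:c-time} generated by the constant $u$. Thus the limit of $y_k$ lies in $F_1(x,u)$. Specializing $(x_k,u_k) = (x,u)$ yields compactness of $F_1(x,u)$, while the general case, together with Proposition \ref{prop:uscCompact} from the Appendix, yields upper semi-continuity of $F_1$, completing the verification of \ref{h:Convergence}.
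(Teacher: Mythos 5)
First, a remark on the comparison itself: the paper does not prove Lemma \ref{l:sampledOCP} — it imports it verbatim as Lemma~1 of \cite{i17conv} — so there is no in-paper proof to measure your argument against, and I am judging the proposal on its own terms.

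Your decomposition is the natural one, and most of it is sound: strictness of $F_1$ via Peano plus the extension clause of \ref{h:sampledOCP}, properness of $\mathbb{R}^n$ and compactness of $U$, upper semi-continuity of $G_1$ and $g_1$ from openness of the finiteness sets, and the Ascoli--Arzel\`a/closure-theorem route into Proposition \ref{prop:uscCompact} for $F_1$. The genuine gap is the step where you produce the compact set $K$ containing the traces of all the $\xi_k$: ``since $f$ is $C^1$ in its state argument and $U$ is compact, a Gronwall-type estimate yields a single compact set $K$.'' Hypothesis \ref{h:sampledOCP} gives only continuity of $f$ and of the $D_jf_i$, hence local Lipschitz bounds; there is no global Lipschitz or linear-growth condition, so Gronwall does not apply, and the claim as justified is false: for $\dot x = x^2$ (which is $C^1$ with compact $U$) solutions starting in a bounded set do not remain in any compact set on a fixed interval — they blow up. What excludes this here is precisely the extension clause of \ref{h:sampledOCP}, which your argument never invokes at this step. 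The correct justification is a funnel-boundedness argument: assuming the traces unbounded, extract solutions escaping every ball at times accumulating at a first escape time $t^*\le\tau$, pass to a locally uniform limit on $\intco{0,t^*}$, use the extension clause (together with the closure theorem) to see that this limit is a genuine, hence bounded, solution up to $t^*$, and then use a local bound on $|f|+w$ in a neighborhood of its trace to show the approximating solutions could not in fact have escaped near $t^*$. This is a substantive additional argument, not a routine estimate, and without it the compactness and upper semi-continuity of $F_1$ — the heart of the lemma — are not established. Everything downstream of the existence of $K$ (equicontinuity, Ascoli--Arzel\`a, convexity and upper semi-continuity of $(x,u)\mapsto f(x,u)+\segcc{-w,w}$, and the appeal to Proposition \ref{prop:uscCompact}) is fine.
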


For the actual computation of abstractions, we introduce
the \concept{domain} $K$ of $(X_1,U_1,F_1,G_1,g_1)$,
\begin{multline}\label{e:domain}
K
=
\{ p \in X_1
\mid
g_1(X_1,p,U_1)\:\cup
\\
g_1(p,X_1,U_1) \cup \{ G_1(p) \}
\neq \{ \infty \}
\}
\end{multline}
which includes the effective domain of the value function.

In the construction of an abstraction of the optimal control problem associated with
\ref{e:System:c-time} various bounds related to the dynamics and the cost
functions are used, as detailed below. Here and in Section \ref{s:Example},
$|x|$ and $\| x \|$ denote the component-wise absolute
value, respectively, the infinity norm of $x \in \mathbb R^n$,
and all balls are understood with respect to the infinity norm.

\begin{hypothesisA}
\label{h:computation}
Let $K$ be defined by~\ref{e:domain}. Let $K'$ be convex and
compact and so that
for every $u\in U$ and every solution $\xi$ of~\ref{e:System:c-time} on
  $\intcc{0,\tau}$ generated by  $u$ with $\xi(0)\in K$ we have $\xi(\intcc{0,\tau})\subseteq K'$.
The constants $A_0\in\mathbb{R}_+^n$, $A_1\in \mathbb{R}^{n\times n}$,
$A_2,A_3 \ge 0$ and $\varepsilon > 0$ satisfy the inequalities  (component-wise)
\begin{subequations}
\label{e:bounds}
\begin{align}
  \label{e:bounds:f}
  A_0&\ge |f(p,u)|+w, \\
  \label{e:bounds:df}
  (A_1)_{i,j}
  &\ge
  \begin{cases}
    D_j f_i(x,u),& \text{if $i=j$,}\\
    | D_j f_i(x,u) |,& \text{otherwise}
  \end{cases}\\
  \intertext{\noindent for all $u\in U$ and all $p\in \bar B(K',\varepsilon)$.
    Moreover, for all $p,\bar p\in G_1^{-1}(\mathbb R)$ we have}
  \label{e:bounds:dG}
  \|p-\bar p\|A_2&\ge|G_1(p)-G_1(\bar p)| , \\
  \intertext{\noindent and for all  $(p,q,u),(\bar p,\bar q,u)\in g_1^{-1}(\mathbb R)$ we have}
  \label{e:bounds:dg}
  (\|p-\bar p\|+\|q-\bar q\|)A_3& \ge
  |g_1(p,q,u)-g_1(\bar p,\bar q,u)|.
\end{align}
\end{subequations}
\end{hypothesisA}
We refer the interested reader to \cite{i17conv} for a discussion of the
computation of the quantities in \ref{h:computation}.

Following \cite{i17conv}, an abstraction $\Pi_2$ of the optimal control problem $\Pi_1$ associated with
\ref{e:System:c-time} and $\tau$ is obtained as
follows. Let $\Pi_1$ and $\Pi_2$ be of the form \ref{e:two:ocp}. The state
alphabet $X_2$ is constructed from a uniform discretization of the domain
\ref{e:domain} of $\Pi_1$ using the discretization parameter $\eta\in
(\mathbb{R}_+\smallsetminus \{0\})^n$. Similarly, the input alphabet $U_2$ is
obtained by a discretization of $U_1$ using the discretization parameter $\mu\in
(\mathbb{R}_+\smallsetminus \{0\})^n$. The transition function $F_2$ is obtained
from an over-approximation of the attainable set of \ref{e:System:c-time} whose
computation is outlined in Algorithm~1 in \cite{i17conv}. 
To this end, the sampling time $\tau$ is subdivided in $k$ inter-sampling times
$t=\tau/k$. At each of those inter-sampling times, the attainable set is
over-approximated by a union of hyper-intervals
using a \concept{growth bound} \cite[Def. VIII.2]{i14sym},
\cite{KapelaZgliczynski09} to bound the
distance of neighboring trajectories. Here the estimates $A_0$ and $A_1$ in \ref{h:computation} are instrumental. 
In order to control the
error due to the over-approximation, at each inter-sampling time, each
hyper-interval in the approximation can be subdivided in smaller
hyper-intervals, whose size is determined by the parameter $\theta>0$.
Throughout the computation, several initial value problems have to be solved
numerically. The resulting error together with other errors, e.g. rounding
errors, can be accounted for using the parameter $\gamma>0$. The cost functions
$G_2$, $g_2$ of the abstraction are derived from the values of the cost
functions $G_1$, $g_1$ evaluated at the
discretized states and inputs. The Lipschitz constants in \ref{e:bounds:dG} and
\ref{e:bounds:dg} are used to ensure that the functions $G_2$, $g_2$ indeed
are upper bounds in the sense of \ref{prop:Abstraction:terminalCost} and
\ref{prop:Abstraction:runningCost} in Proposition~\ref{prop:Abstraction}. The parameters of
the construction of the abstraction in \cite{i17conv} are summarized in
\ref{tab:params}.

\begin{table}[t]
\centering
\caption{Parameters of the computation of the abstraction in \cite{i17conv}.}\label{tab:params}
\begin{tabular}{|l|l|}
\hline
$\eta\in (\mathbb{R}_+\smallsetminus\{0\})^n $ & state alphabet discretization\\
$\mu\in (\mathbb{R}_+\smallsetminus\{0\})^m $ & input alphabet discretization\\
$k\in \mathbb N $ & sample interval discretization\\
$\theta>0 $ & subdivision factor\\
$\gamma>0 $ & bound on numerical errors\\
\hline
\end{tabular}
\end{table}

We use $\Pi$ to refer to the optimal control problem associated with
\ref{e:System:c-time} and $\tau$,
and we consider sequences of
parameters in \ref{tab:params} satisfying
\[
\lim_{i\to \infty} \eta_i = 0,
\lim_{i\to \infty} \mu_i = 0,
\lim_{i\to \infty}
\left(
\theta_i \| \eta_i \| + \frac{1}{k_i} + \gamma_i k_i
\right)
=
0.
\]
Then the method in \cite[Sec.~V]{i17conv} produces a sequence
$(\Pi_i)_{i \in \mathbb{N}}$ of finite
abstractions $\Pi_i$ of some conservatism
$\rho_i \in \mathbb{R}_{+} \cup \{ \infty \}$ of $\Pi$, satisfying
$\lim_{i \to \infty} \rho_i = 0$, as required in hypothesis
\ref{h:ss:AlgorithmicSolution:OverallMethod} in Section
\ref{ss:AlgorithmicSolution:OverallMethod}.
See \cite[Th.~1, 2]{i17conv}.

\subsection{Solution of finite optimal control problems}
\label{ss:AlgorithmicSolution:TheAbstractController}

We propose Algorithm \ref{alg:Dijkstra} to efficiently
solve the optimal control problem \ref{e:OCP} whenever the state and
input alphabets are finite; see Theorem \ref{th:Dijkstra} below. The
algorithm can be regarded as an implementation of the high-level
algorithm in \cite{Knuth77}, with improved run time bound and suitable
modifications to additionally compute a controller realizing the
achievable closed-loop performance. We also present a condition under
which the run time is linear in the size of the abstraction of the
plant. This result applies e.g. to the Reach-Avoid and Minimum Time
Problems in Examples \ref{ex:ReachAvoid} and \ref{ex:MinTime}, and
contains the unweighted case of \cite{DowlingGallier84} as a special
case.
In the following, $\card(M)$ denotes the cardinality of the set $M$.

\begin{algorithm}[t]
  \caption{\label{alg:Dijkstra}Dijkstra-like algorithm to solve finite problems}
  \begin{algorithmic}[1]
    \Input{Optimal control problem $(X,U,F,G,g)$}
    \Require{$X$, $U$ finite}
    \State {$W \defas G$}\hfill{}{// value function}
    \State {$Q \defas \Menge{x \in X}{G(x) < \infty}$\hfill{}{// priority queue}}\label{alg:Dijkstra:Init_Q}
    \State {$E \defas \emptyset$\hfill{}{// set of settled states}}
    \ForAll {$p \in X$}
      \State {$c(p) \defas \emptyset$}\hfill{}{// controller}
    \EndFor
    \While {$Q \not= \emptyset$}\label{alg:Dijkstra:while_entry}
      \State {$q \defin \argmin \Menge{ W(x) }{ x \in Q }$}\label{alg:Dijkstra:pick_q}
      \State {$Q \defas Q \setminus \{ q \}$}\label{alg:Dijkstra:Remove_q_from_Q}
      \State {$E \defas E \cup \{ q \}$}\label{alg:Dijkstra:Add_q_to_E}
      \ForAll {$(p,u) \in F^{-1}(q)$}\label{alg:Dijkstra:for_all_predecessors_entry}
        \State {$M \defas \max \Menge{ g(p,y,u) + W(y) }{ y \in F(p,u)}$}\label{alg:Dijkstra:Zuweisung_M}
        \If {$F(p,u) \subseteq E$ and $W(p) > M$}\label{alg:Dijkstra:if_condition}
          \State {$W(p) \defas M$}\label{alg:Dijkstra:Zuweisung_W(p)}
          \State {$Q \defas Q \cup \{ p \}$}\label{alg:Dijkstra:Add_q_to_Q}
          \State {$c(p) \defas \{ u \}$}\label{alg:Dijkstra:Zuweisung_mu(p)}
        \EndIf
      \EndFor
    \EndWhile
    \Output {$c$, $W$}
  \end{algorithmic}
\end{algorithm}

\begin{theorem}
\label{th:Dijkstra}
Let \ref{e:OCP} be an optimal control problem with finite $X$ and
$U$. Then Algorithm \ref{alg:Dijkstra} terminates.
\\
Suppose that the maps $c$ and $W$ are returned on termination, and
let $C = ( Z, Z, X, U, H )$, where $Z$ is any singleton
set, $u_0 \in U$, and
$H \colon Z \times X \rightrightarrows Z \times U \times \{0,1\}$ is
given by
\begin{equation}
\label{e:th:Dijkstra:H'}
H( Z, p )
=
\begin{cases}
Z \times \{ u_0 \} \times \{ 1 \}, &\text{if $c(p) = \emptyset$,}\\
Z \times c(p) \times \{ 0 \},      &\text{otherwise.}
\end{cases}
\end{equation}
Then $C$ is a static
controller for $S$,
and $L = V = W$, where
$S$, $L$ and $V$ denote the system \ref{e:def:System},
the closed-loop value function of \ref{e:OCP} associated with
$C$, and the value function of \ref{e:OCP}.
\\
Moreover, Algorithm \ref{alg:Dijkstra} can be implemented
such that it runs in $O(m + n \log n)$ time,
where
$n = \card(X)$ and $m = \sum_{p \in X} \sum_{u \in U} \card(F(p,u))$,
and in $O(m)$ time if additionally
\begin{equation}
\label{e:cor:th:Dijkstra}
g(X,X,U) \subseteq \{ \gamma, \infty \}
\text{\ and\ }
G(X) \subseteq \{ \Gamma, \gamma + \Gamma, \infty \}
\end{equation}
for some $\gamma, \Gamma \in \mathbb{R}_{+}$.
\end{theorem}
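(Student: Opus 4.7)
The plan is to establish four facts in turn: a Dijkstra-style monotonicity invariant giving termination and ``each state settled at most once,'' the pre-fixed-point inequality $W \le P(W)$ at termination (which yields $W \le V$ via Theorem~\ref{th:OptimalityPrinciple}), the fact that the synthesized controller $C$ realises a closed-loop value of exactly $W$ (so that $L \le W$ and therefore $L = V = W$), and finally the two complexity bounds.

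The main obstacle is the monotonicity invariant, which rests on the non-negativity of $g$. Let $q_1, q_2, \ldots$ be the states extracted at line~\ref{alg:Dijkstra:pick_q} in chronological order, and write $w_k$ for $W(q_k)$ at the moment of extraction. I would prove by induction on $k$ that (a)~$W(q_j)$ is never modified after the $j$-th extraction for any $j \le k$, (b)~$w_1 \le w_2 \le \cdots \le w_k$, and (c)~every $x \in Q$ at the start of iteration $k+1$ satisfies $W(x) \ge w_k$. The key observation is that any assignment at line~\ref{alg:Dijkstra:Zuweisung_W(p)} during the processing of $q_k$'s predecessors sets $W(p) = M = \max_{y \in F(p,u)}[g(p,y,u) + W(y)]$; since $q_k \in F(p,u)$ and $g \ge 0$, we have $M \ge g(p,q_k,u) + w_k \ge w_k$. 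This simultaneously forces the if-condition at line~\ref{alg:Dijkstra:if_condition} to fail whenever $p = q_j$ for some $j \le k$ (as then $W(q_j) = w_j \le w_k \le M$ by the induction hypothesis), giving~(a), and ensures that every updated value is bounded below by $w_k$, giving~(c). The minimum-selection at line~\ref{alg:Dijkstra:pick_q} then yields $w_{k+1} \ge w_k$. Termination follows since at most $\card(X)$ distinct states can ever be extracted.

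For correctness I would first note that $W \le G$ throughout, by the initialization and because updates at line~\ref{alg:Dijkstra:Zuweisung_W(p)} only decrease $W$. For the dynamic part of $W \le P(W)$ at termination, fix $p \in X$ and $u \in U$. If some $y \in F(p,u)$ lies outside $E$ at termination, then $W(y) = \infty$ (any state that ever receives a finite value enters $Q$ and is eventually extracted), so $\sup_{y \in F(p,u)}[g(p,y,u) + W(y)] = \infty$ trivially dominates $W(p)$. Otherwise, the last $y^{\star} \in F(p,u)$ to be extracted triggers the processing of $(p,u) \in F^{-1}(y^{\star})$ at line~\ref{alg:Dijkstra:for_all_predecessors_entry}, at which point $F(p,u) \subseteq E$ with all $W(y)$ already at their terminal values, and the update at line~\ref{alg:Dijkstra:Zuweisung_W(p)} enforces $W(p) \le M = \sup_{y \in F(p,u)}[g(p,y,u) + W(y)]$. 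Theorem~\ref{th:OptimalityPrinciple} then gives $W \le V$. For the controller, whenever $c(p)$ is assigned a non-empty value at line~\ref{alg:Dijkstra:Zuweisung_mu(p)} we have $p \notin E$ at that moment (otherwise the if-condition fails by the invariant), so every $y$ in the then-current $F(p, c(p))$ has been extracted strictly before $p$ eventually is. An induction on the extraction index of $p$ then shows that every $(u, v, x) \in \mathcal{B}_p(C \times S)$ terminates within at most $\card(X)$ steps with $J(u, v, x) \le W(p)$, whence $L \le W$; combining with $V \le L$ from the definition of $V$ and with $W \le V$ yields $L = V = W$.

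For the complexity bounds, standard Fibonacci-heap accounting yields $O(\log n)$ amortized per extraction at line~\ref{alg:Dijkstra:pick_q} and $O(1)$ amortized per decrease-key at line~\ref{alg:Dijkstra:Zuweisung_W(p)}. The total work in the inner loop at line~\ref{alg:Dijkstra:for_all_predecessors_entry} over the entire run equals $\sum_{q \in X} \card(F^{-1}(q)) = m$, since each $q$ is extracted at most once, giving $O(m + n \log n)$ overall. Under hypothesis~\ref{e:cor:th:Dijkstra} every finite value of $W$ lies in $\Gamma + \gamma \mathbb{Z}_{+}$, and by the monotonicity of $(w_k)$ from the first step, a bucket-indexed monotone priority queue with a single advancing cursor realises insertion, decrease-key, and extract-min in amortized $O(1)$. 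Since $F$ is strict and $U$ is non-empty we have $m \ge n$, so the overall cost reduces to $O(m)$.
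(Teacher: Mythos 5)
Your correctness argument is sound and follows essentially the same route as the paper: a monotonicity invariant for the extracted values yielding termination and ``settle once,'' the pre-fixed-point inequality $W \le P(W)$ at termination combined with Theorem~\ref{th:OptimalityPrinciple}, and an induction along the extraction order for the controller. One pleasant difference: you close the chain as $W \le V \le L \le W$, so you never need the invariant $W \ge V$ that the paper maintains throughout the run, nor the exact identity $L = W$ proved there by a separate induction --- your one-sided $L \le W$ suffices. Your case analysis for $W \le P(W)$ (successor outside $E$ forces $W = \infty$; otherwise condition on the last-settled successor $y^{\star}$) matches the paper's contradiction argument in substance.

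There is, however, a gap in the complexity part. You assert that the total work of the inner loop at line~\ref{alg:Dijkstra:for_all_predecessors_entry} is $\sum_{q}\card(F^{-1}(q)) = m$, but that only counts loop \emph{iterations}. As written, each iteration for a pair $(p,u)$ evaluates $M = \max_{y \in F(p,u)}\bigl(g(p,y,u)+W(y)\bigr)$ at line~\ref{alg:Dijkstra:Zuweisung_M} and tests $F(p,u) \subseteq E$ at line~\ref{alg:Dijkstra:if_condition}, each costing $\Theta(\card(F(p,u)))$; since $(p,u)$ is visited once per element of $F(p,u)$, the naive total is $\sum_{p,u}\card(F(p,u))^2$, which can far exceed $m$. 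The theorem claims the algorithm ``can be implemented'' to run in $O(m+n\log n)$, so you must supply the implementation that makes each visit $O(1)$: the paper does this by keeping, for each pair $(p,u)$, an auxiliary counter of not-yet-settled successors (decremented as states enter $E$, so the subset test is a zero check) and an incrementally maintained running maximum for $M$, citing the Dowling--Gallier technique. With that fix your Fibonacci-heap accounting and your bucket-queue argument for the $O(m)$ case (which is a legitimate alternative to the paper's FIFO-queue observation that $W(Q) \subseteq \{W(q),\gamma+W(q)\}$, provided you note that the finite values lie in $\{\Gamma + k\gamma : 0 \le k \le n+1\}$ so only $O(n)$ buckets are ever visited) both go through.
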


\begin{proof}
Observe that $M \ge W(q)$, and in turn, $p \not= q$, on lines
\ref{alg:Dijkstra:Zuweisung_W(p)}-\ref{alg:Dijkstra:Zuweisung_mu(p)}.
Thus, throughout the algorithm on lines
\ref{alg:Dijkstra:Remove_q_from_Q}-\ref{alg:Dijkstra:Zuweisung_mu(p)},
the value of $W(q)$
monotonically increases and $W(q) \ge \max W(E)$. Then
$p \not\in E$ on lines
\ref{alg:Dijkstra:Zuweisung_W(p)}-\ref{alg:Dijkstra:Zuweisung_mu(p)},
and so
each $q$ is removed from $Q$ at most once. This shows that the
\verb|while|-loop on lines
\ref{alg:Dijkstra:while_entry}-\ref{alg:Dijkstra:Zuweisung_mu(p)} is
entered at most $n$ times. Moreover, $F^{-1}(q) \subseteq X \times U$
on line \ref{alg:Dijkstra:for_all_predecessors_entry}, and so the
algorithm terminates as $X \times U$ is finite.

Next note that $W$ is a monotonically decreasing sequence of
functions $X \to \intcc{0,\infty}$ bounded above by $G$.
Using induction we see that
$Q \cup E = W^{-1}(\mathbb{R}_{+})$
on line
\ref{alg:Dijkstra:Zuweisung_mu(p)}.

If $W \ge V$, then $M \ge P(V)(p)$ on line
\ref{alg:Dijkstra:Zuweisung_W(p)}, where $P$ is the dynamic
programming operator associated with \ref{e:OCP}, and so $W \ge V$ on
lines
\ref{alg:Dijkstra:while_entry}-\ref{alg:Dijkstra:Zuweisung_mu(p)}
throughout the algorithm, as $V = P(V)$ by Th.~\ref{th:OptimalityPrinciple}.
We claim that $W \le P(W)$ upon termination,
which implies $W = V$ by Th.~\ref{th:OptimalityPrinciple}. Assume
the contrary. Then, as $W \le G$, there exist
$(p,u) \in X \times U$
such that
\begin{equation}
\label{e:th:Dijkstra:proof:1}
W(p)
>
\max \Menge{ g(p,y,u) + W(y) }{ y \in F(p,u)},
\end{equation}
and in turn, $F(p,u) \subseteq E$ since
$E = W^{-1}(\mathbb{R}_{+})$.
Let $q \in F(p,u)$ be the element that is last
added to $E$. Then, upon its addition on line
\ref{alg:Dijkstra:Add_q_to_E} we have $W(p) > M \ge W(q)$ on line
\ref{alg:Dijkstra:if_condition} by \ref{e:th:Dijkstra:proof:1}. Thus,
line \ref{alg:Dijkstra:Zuweisung_W(p)} is executed, which contradicts
\ref{e:th:Dijkstra:proof:1} and so implies that $W = V$ upon
termination.

Obviously, $C$ is a static controller for $S$.
To show that $L = W$ upon termination,
first suppose that
$q \not\in E$ upon termination.
Then $W(q) = \infty$ and $c(q) = \emptyset$, and so $L(q) = \infty$ by
\ref{e:th:Dijkstra:H'}. Hence, it suffices to show that
$L(q) = W(q)$ on line \ref{alg:Dijkstra:Add_q_to_E} throughout the
algorithm.
To this end, we proceed by induction and assume that $L(x) = W(x)$
holds on line \ref{alg:Dijkstra:Remove_q_from_Q} for all
$x \in E$. Note that $c(q) \not= \emptyset$
since line \ref{alg:Dijkstra:Add_q_to_Q} must have been executed at
least once, and additionally
\begin{equation}
\label{e:th:Dijkstra:proof:2}
W(q)
=
\max \Menge{ g(q,y,c(q)) + L(y) }{ y \in F(q,c(q))}.
\end{equation}
Then $v(0) = 0$ for every
$(u,v,x) \in \mathcal{B}_q(C \times S)$ by
\ref{e:th:Dijkstra:H'}, and in turn,
$J(u,v,x)
=
g(q,x(1),c(q))
+
J(\sigma u, \sigma v, \sigma x)$. Then $L(q) = W(q)$ by
\ref{e:th:Dijkstra:proof:2}.

The data $G$, $W$, $E$ and $c$ are maintained as arrays, so
the respective operations in the algorithm require unit time. Given
an adjacency lists representation \cite{AhujaMagnantiOrlin93} of $F$
that also stores the map $g$, both an analogous representation
of $F^{-1}$ can be obtained and the condition \ref{e:cor:th:Dijkstra}
can be verified, in $O(m)$ time.

Lines
\ref{alg:Dijkstra:Zuweisung_M}-\ref{alg:Dijkstra:Zuweisung_mu(p)} are
executed at most $m$ times. Using auxiliary counters
the tests $F(p,u) \subseteq E$ on line
\ref{alg:Dijkstra:if_condition} take $O(m)$ total time
\cite{DowlingGallier84}, and analogously for computing the
maximum on line \ref{alg:Dijkstra:Zuweisung_M}.
Thus, Algorithm \ref{alg:Dijkstra} requires $O(m)$ time, plus
the time for executing line \ref{alg:Dijkstra:Init_Q}, executing lines
\ref{alg:Dijkstra:pick_q}, \ref{alg:Dijkstra:Remove_q_from_Q} and
\ref{alg:Dijkstra:Add_q_to_Q} at most $n$ times, and for executing
line \ref{alg:Dijkstra:Zuweisung_W(p)} at most $m$ times.
Consequently, the first time bound is met if
$Q$ is maintained as a Fibonacci heap
\cite{AhujaMagnantiOrlin93}.
If condition \ref{e:cor:th:Dijkstra} holds,
then $M = \gamma + W(q)$ on line \ref{alg:Dijkstra:Zuweisung_W(p)},
and so $W(Q) \subseteq \{ W(q), \gamma + W(q) \}$ on line
\ref{alg:Dijkstra:Remove_q_from_Q}.
Thus, the second bound is met if $Q$ is maintained as a FIFO queue
\cite{AhujaMagnantiOrlin93}.
\end{proof}

\subsection{Comments on Computational Complexity}
\label{ss:AlgorithmicSolution:ComputationalComplexity}

In our approach, the concrete control problem \ref{e:OCP} is
discretized first, resulting in an abstration which is
solved subsequently. Bounds on the computational complexity have
been provided in Theorem \ref{th:Dijkstra} for the second step, and in
\cite[Sec.~III.D]{i11abs}, for the special case of the first step
when $k=1$ and $\eta$, $\mu$, $\Theta$ and $\gamma$ are constants.
The estimates show that the overall computational effort is enormous
and has to be expected to grow rapidly with the dimension of $X$, and
even more acutely so when a sequence of abstractions of decreasing
conservatism is to be computed.
While the problem is found with all discretization based methods to
solve \ref{e:OCP}, several strategies to somewhat relieve the
computational burden that have been proposed,
e.g.~\cite{MunosMoore02,RunggerStursberg12,i15grid}, could
potentially be extended to our setting.

\section{Illustrative Examples and Applications}
\label{s:Example}

We shall demonstrate our approach on three
optimal control problems. In every of these three cases, and in contrast
to the theory presented in this paper, none of the related works
discussed in Section \ref{s:intro} is capable of synthesizing
controllers together with upper bounds on their performances that
arbitrarily closely approximate the best achievable performance.

\subsection{A minimum time problem involving chaotic dynamics}
\label{ss:examples:logistic_map}

To demonstrate the capability of our theory to approximate
complex value functions, we first apply it to an instance $\Pi$
of the Minimum Time Problem in Example \ref{ex:MinTime} whose
underlying dynamics is chaotic. Specifically,
$\Pi = (\intcc{0,1},\{0\}, F, G, g)$, where
the transition function $F$ is the \concept{logistic map}
\cite{Devaney89},
$F(p,0)=\{4p(1-p)\}$, and the
target and obstacle sets are given by
$D=\intoo{0.415,0.69}$
and $M = \emptyset$.

The value function $V$ of $\Pi$ is discontinuous and rather irregular, see
\ref{f:logicsticmap}, but can be determined exactly by rewriting the
iteration in Corollary \ref{cor:th:OptimalityPrinciples:MAX} into an
iteration for sublevel sets,
$V^{-1}(0) = D$ and
$V^{-1}(\intcc{0;T+1}) = F(\cdot,0)^{-1}(V^{-1}(\intcc{0;T}))$.
\begin{figure}
\centering
\includegraphics[width=.499\linewidth]{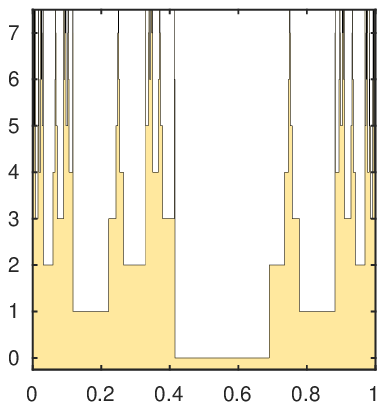}%
\hspace*{\fill}
\includegraphics[width=.499\linewidth]{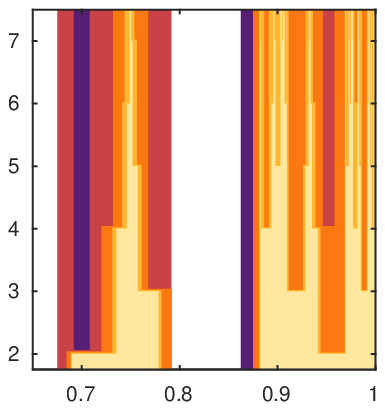}
\caption{\label{f:logicsticmap}
Minimum time problem involving chaotic dynamics.
Left: Hypograph of the value function $V$. Right: Hypographs of
$V$ (light yellow) and of the approximate value functions $V_{N}$ for
$N \in \{ 40, 60, 85, 400 \}$
(purple, red, orange and dark yellow, respectively).
}%
\end{figure}
For everyy $N \in \mathbb{N}$ it is straightforward to compute an
abstraction
$\Pi_{N}
=
( X_{N}, \{0\}, F_{N}, G_{N}, g_{N})
$
of conservatism $1 / N$ of $\Pi$, where
$F_N$ satisfies the conditions in Def.~\ref{def:AbstractionOfPrecision},
\begin{align*}
 X_{N}
&=
\{ \Omega_0, \ldots, \Omega_{N} \},\\
\Omega_i
&=
\left(
\tfrac{i}{N}+\intcc{-\tfrac{1}{2N},\tfrac{1}{2N}}
\right)
\cap
\intcc{0,1},
\\
 g_{N}(\Omega,\Omega',0)
&=
1,
\text{\ and}
\\
 G_{N}(\Omega)
&=
\begin{cases}
0, &\text{if } \Omega \subseteq D,\\
\infty, &\text{otherwise,} 
\end{cases}
\end{align*}
for all $i \in \intcc{0;N}$ and all
$\Omega, \Omega' \in  X_N$.
The value function $V_N$ of $\Pi_N$ is easily computed using Algorithm
\ref{alg:Dijkstra} in Section \ref{ss:AlgorithmicSolution:TheAbstractController}.
\ref{f:logicsticmap} illustrates the approximation of $V$ by $V_N$,
for selected values of the conservatism $1 / N$.

\subsection{An entry-time problem for the inverted pendulum}
\label{ss:examples:InvertedPendulum}

We consider a variant of the popular inverted pendulum problem with
perturbations, where the motion of the cart is not modeled;
see e.g.
\cite{i11abs,FantoniLozano02}.
The acceleration $u$ of the cart, which is constrained to
$\intcc{-2,2}$, is the input to the system%
\begin{subequations}
\label{e:ex:cart}
\begin{align}
\label{e:ex:cart:x1}
\dot x_1 &=   x_2\\
\label{e:ex:cart:x2}
\dot x_2 &\in \sin(x_1)+u\cos(x_1)-2\kappa x_2+\intcc{-w,w},
\end{align}
\end{subequations}
the states $x_1$ and $x_2$
correspond to the angle, respectively, the angular velocity of the pole,
$\kappa=0.01$ is a friction coefficient, and
$w=0.1$ accounts for any uncertainties.

We restrict the domain of the problem to $K=\intoo{-2\pi,2\pi}\times\intoo{-3,3}$,
i.e., the set $\mathbb{R}^2 \setminus K$ is an obstacle,
and choose a neighborhood $D$ of the upwards pointing equilibrium
$(0,0)$,
\begin{align*}
D&=\Menge{x \in \mathbb R^2}{63 x^2_1 + 12 x_2 x_1 + 56 x^2_2 < 42},
\end{align*}
as the target set.
In correspondence with $K$ and $D$, we
define the terminal and running cost functions $G$ and $g$ by
$G(p)=0$ if $p \in D \cap K = D$, $G(p)=\infty$, otherwise, and
\[
g(p,q,u)=
\begin{cases}
u^2, &\text{if } p \in K,\\
\infty, &\text{otherwise}.
\end{cases}
\]
We use $\Pi$ to refer to the optimal control problem associated
with the system \ref{e:ex:cart}, the sampling time $\tau = 0.2$ and
the cost functions $G$ and $g$. With $\Pi$
we aim at minimizing the actuation
energy to steer the system into the target $D$.
We pick the constants in \ref{h:computation} to
\begin{align*}
A_0 \defas \begin{pmatrix} 4\\ 2.5\end{pmatrix},\;
A_1 \defas \begin{pmatrix} 0 & 1\\ 2.25 & -0.02 \end{pmatrix} ,\;
A_2 \defas 0,\; A_3 \defas 0.
\end{align*}
We use $A_0$ to verify that $K'=\cBall(\mathrm{cl}K,0.9)$
contains any solution of \ref{e:ex:cart} originating form $K$ since
$\cBall(K,\tau\|A_0\|)\subseteq K'$.
Moreover, \ref{e:bounds} is satisfied
on $\intcc{-8,8}\times \intcc{-4,4}\supseteq\cBall(K',0.1) $,
and we see that \ref{h:computation} holds for $\varepsilon=0.1$.

We conducted several experiments
using $\theta=1$ and four parameter tuples
$(\eta,\mu,k)$ with values 
$p_1=((0.08,0.08),0.2,1)$, $p_2=((0.04,0.04),0.15,2)$,
$p_3=((0.02,0.02),0.1,3)$ and
$p_4=((0.01,0.01),0.05,4)$.
For the solution of initial value problems, which are necessary in the
construction of the abstraction \cite{i17conv}, we use the Taylor series
method~\cite{NedialkovJacksonCorliss99} of order
$5$ with stepsize $\tau/(5k)$. We use $\gamma$ to account for any
numerical errors, which we
derive from the $6$th order remainder term of the Taylor expansion
maximized over the appropriate domain. Specifically, for 
$k=1$,
$k=2$,
$k=3$ and
$k=4$ we obtain
$\gamma=6.3\cdot 10^{-7}$,
$\gamma=9.9\cdot 10^{-9}$,
$\gamma=8.7\cdot 10^{-10}$
and 
$\gamma=1.6\cdot 10^{-10}$,
respectively. 
The computation time to compute the abstraction $\Pi_i$ and
the optimal controller $C_i$ (Alg.~\ref{alg:Dijkstra})
is $0.5$, $8.5$, $139$ and $4715$ seconds,
for the parameter tuple $p_i$, $i \in \intcc{1;4}$, respectively.
(Here and for the following example, computations are conducted on 3.5 GHz Intel Core i7 CPU with 32GB memory.)
The performance of the controllers $C_1\circ{\in}$ through
$C_4\circ{\in}$ is illustrated in \ref{f:cartpole}.

\begin{figure}
  \centering
  \includegraphics[width=.499\linewidth]{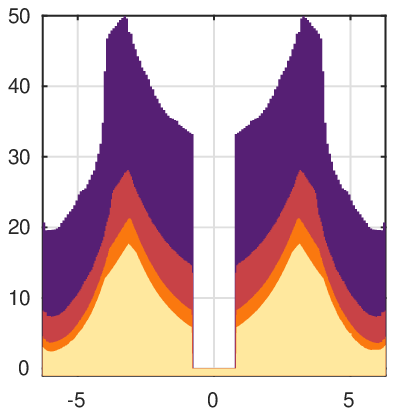}%
  \includegraphics[width=.499\linewidth]{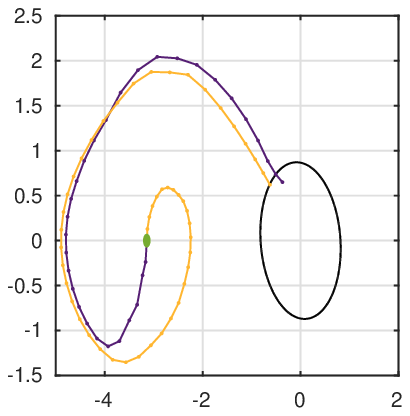}
  \caption{Entry-time problem for the inverted pendulum.
  Left: Cross-section of the hypograph of the closed-loop value function
  of $\Pi$ associated with the controller $C_i\circ{\in}$, ranging over
  $x_1 \in \intcc{-2\pi,2\pi}$ for fixed $x_2=0$, for
  $i\in\{1,2,3,4\}$ (purple, red, orange and yellow, respectively).
  Right: Closed-loop trajectories generated by the controllers
  $C_1\circ{\in}$ (purple) and $C_4\circ{\in}$ (yellow);
  the closed-loop value function at the initial states is bounded by
  $47.68$, respectively, $17.65$.
  The initial position is marked by the green dot and the
  target set $D$ is illustrated by the black ellipse.
}\label{f:cartpole}
\end{figure}

\subsection{The Homicidal Chauffeur Game}
\label{ss:examples:HomicidalChauffeur}

In this pursuit-evasion game, a car with restricted turning radius,
traveling at some constant velocity, aims at catching an agile pedestrian
as quickly as possible \cite{Isaacs65}. 
The problem can be posed as a Minimum Time Problem by
choosing the center of the
car as origin and directing the $y$ axis along the velocity vector of
the car.
The dynamics is then described by
\begin{align*}
\dot x &= -y u + v_1\\
\dot y &= xu -1 + v_2,
\end{align*}
where the input $|u|\le 1$ is the forward velocity of the car,
and $v=(v_1,v_2)$ is the velocity vector of the pedestrian
\cite{Isaacs65}, which we
consider as a perturbation with bound $\| v \| \le 0.3$.
Using the sampling time
$\tau=0.1$ and the domain $K=\intoo{-5,5}\times \intoo{-5,5}$,
we cast the sampled differential game as Minimum Time Problem with
target set $D = \Menge{(x,y) \in \mathbb R^2}{x^2 + y^2 < 0.9}$ and
the obstacle set $M=\mathbb R^2\smallsetminus K$. The cost functions
follow according to Example~\ref{ex:MinTime} and it is straightforward
to verify the Hypothesis \ref{h:computation} as follows. We fix
$\varepsilon=0.1$,  $A_0=(6.4,6.4)$, and $(A_1)_{11}=(A_1)_{22}=0$,
$(A_1)_{12}=(A_1)_{21}=1$, $A_2=A_3=0$ and
$K' = \intcc{-6,6} \times \intcc{-6,6}$. The estimates
\ref{e:bounds} are obvious, and \ref{e:bounds:f} implies
that every solution $\xi$ on $\intcc{0,\tau}$ evolves inside
$\cBall(K,\tau \|A_0\|) \subseteq K'$, and so
\ref{h:computation} holds.

We approximately solve $\Pi$ using $\theta=2$ and four parameter tuples
$(\eta,\mu,k)$ with values
$p_1=((0.03,0.03),0.2,1)$, 
$p_2=((0.02,0.02),0.1,2)$,
$p_3=((0.015,0.015),0.1,3)$ and
$p_4=((0.01,0.01),0.05,4)$.
As the nominal dynamics
under constant control inputs can be solved exactly, we neglect the
numerical errors and set $\gamma=0$.
The computation time to compute the abstraction $\Pi_i$ and
the optimal controller $C_i$ (Alg.~\ref{alg:Dijkstra})
is $3.5$, $34$, $133$ and $1851$ seconds,
for the parameter tuple $p_i$, $i \in \intcc{1;4}$, respectively.
Naturally, with finer discretization parameters the computation times increases.
The performance of the controllers $C_1\circ{\in}$ through
$C_4\circ{\in}$ is illustrated in \ref{f:homchauff}.

\begin{figure}
  \centering
  \includegraphics[width=.499\linewidth]{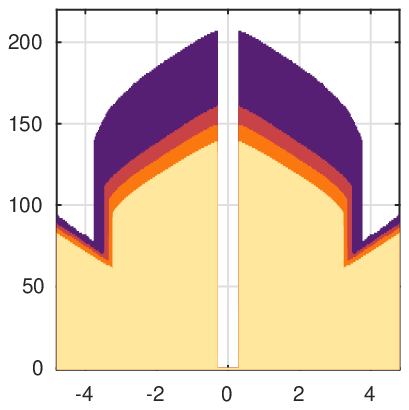}%
  \includegraphics[width=.499\linewidth]{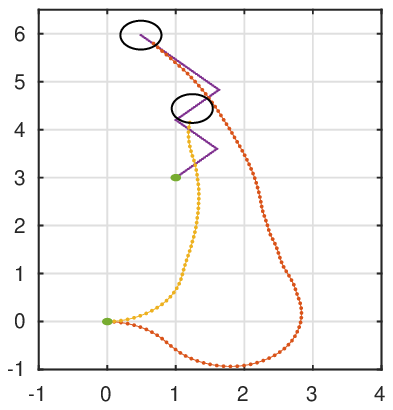}
  \caption{Homicidal Chauffeur Game.
  Left: Cross-section of the hypograph of the closed-loop value
  function of $\Pi$ associated with $C_i\circ{\in}$, ranging over
  $x\in\intcc{-4.5,4.5}$ for fixed $y=0$, for  $i\in\{1,2,3,4\}$ (purple,
  red, orange and yellow, respectively).
  Right: Simulation of the closed-loop. The position of the pedestrian (evader)
  is illustrated in purple. The position of the car (pursuer) for $C_1\circ{\in}$ and
  $C_4\circ{\in}$ is shown in red,
  respectively, yellow.
  The initial positions are marked by the green squares, and
  the capture radius $0.3$ is indicated by the black circles.  
  The worst-case capture times from the initial state for $C_1\circ \in$ and
  $C_4\circ \in$ are bounded by
  $17.0$, respectively, $5.3$ seconds.
}
\label{f:homchauff}
\end{figure}

\section{Summary and Conclusions}
\label{s:Conclusions}

We have presented a novel approach to solve a class of leavable, undiscounted optimal control problems
in the minimax sense for nonlinear control systems in the presence of perturbations and constraints.
The approach is correct-by-construction, i.e., the closed-loop value
function of the synthesized controller is upper bounded by the
closed-loop value function of the abstract controller. Compared to
previously known results, our approach is applicable to more general
cost functions and plant dynamics, and the resulting controllers are
memoryless and symbolic.
Moreover, as we have shown, the
closed-loop value function associated with the concrete controller
hypo-converges to the concrete value function as the conservatism of the
abstraction approaches zero.
This
powerful convergence result distinguishes itself form previously known results
in several important aspects. Most notably, it applies to discontinuous value
functions and implies that our approach is complete in a well-defined
sense.
We have illustrated our results on three optimal control problems,
two of which involving discrete-time plants that represent the
sampled behavior of continuous-time, nonlinear control systems with
additive disturbances. Here, we employed an algorithm that we have
proposed in \cite{i17conv}, to compute abstractions of arbitrary
conservatism.
To increase the computational efficiency of the overall
synthesis approach proposed in this paper is a subject of our current
research.

\appendix

\subsection{The Notion of Hypo-Convergence}
\label{appx:HypoConvergence}

The result below shows that, for the special case considered in Definition
\ref{def:hypoLimit:i13absoc}, that definition is equivalent to
respective definitions in the literature
\cite[Ch.~7.B]{RockafellarWets09},
\cite[Cor.~VII.5.26]{HuPapageorgiou97.i}.
\begin{proposition}
\label{prop:hypoLimit:i13absoc}
Let $X$, $V$ and $L$ be as in Definition
\ref{def:hypoLimit:i13absoc}. Then $V = \hypolim_{i \to \infty} L_i$
iff
$\limsup_{i \to \infty} L_i (x_i) \le V(p)$
for every $p \in X$ and every sequence $(x_i)_{i \in \mathbb{N}}$
converging to $p$.
\end{proposition}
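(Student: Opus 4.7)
The plan is to prove the two implications separately, both by contradiction, exploiting upper semi-continuity of $V$ in the forward direction and a diagonal construction in the converse. Throughout, I equip $X \times \mathbb{R}$ with the product (max) metric, under which $\oBall(\hypo V,\varepsilon)$ and the notion of distance to $\hypo V$ acquire their concrete meaning.

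For the forward direction, assume $V = \hypolim_{i\to\infty} L_i$ and suppose, for contradiction, that $\limsup_{i\to\infty} L_i(x_i) > V(p)$ for some $p \in X$ and some sequence $x_i \to p$. Pick $\delta > 0$ with $\limsup L_i(x_i) > V(p) + 3\delta$; by u.s.c.\ of $V$ at $p$, after possibly shrinking $\delta$, also $V(z) < V(p)+\delta$ for every $z \in \oBall(p,2\delta)$. Apply hypo-convergence with $\varepsilon = \delta$ to obtain a neighborhood $N$ of $p$ and $I \in \mathbb{N}$ with the inclusion in \ref{e:def:hypoLimit:i13absoc} valid for $i \ge I$. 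For infinitely many $i$ we have both $x_i \in N \cap \oBall(p,\delta)$ and $L_i(x_i) > V(p)+3\delta$, so $(x_i, V(p)+2\delta) \in \hypo L_i$. The inclusion then produces $(y_i,\beta_i)\in\hypo V$ within distance $\delta$; this forces $d(y_i,p) < 2\delta$ and $\beta_i > V(p)+\delta$, hence $V(y_i)\ge \beta_i > V(p)+\delta$, contradicting the u.s.c.\ estimate.

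For the converse, assume the sequential condition and suppose hypo-convergence fails at some $p$ and $\varepsilon > 0$. Then for every $k \in \mathbb{N}$ the neighborhood $\oBall(p,1/k)$ witnesses the failure for arbitrarily large indices, so I can choose $i_k \ge k$ and $(x_k,\gamma_k) \in (\oBall(p,1/k)\times\mathbb{R}) \cap \hypo L_{i_k}$ with $d((x_k,\gamma_k), \hypo V) \ge \varepsilon$. Then $x_k \to p$, and by the hypothesis $\limsup_k \gamma_k \le \limsup_k L_{i_k}(x_k) \le V(p)$. The key step is to derive a contradiction with the distance bound: if $V(p) = \infty$ then $(p,\gamma_k) \in \hypo V$, giving distance at most $d(x_k,p) \to 0$; otherwise $(\gamma_k)$ is bounded, so along a subsequence $\gamma_k \to \beta^* \le V(p)$ (or $-\infty$), and the point $(p,\beta^*) \in \hypo V$ (resp.\ $(p,\gamma_k)\in\hypo V$) lies within distance $\max(d(x_k,p),|\gamma_k-\beta^*|) \to 0$ of $(x_k,\gamma_k)$. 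Either alternative contradicts the assumption that the distance stays $\ge \varepsilon$.

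The main subtlety is the second direction: one must make sure the negation of the quantifier structure in Definition \ref{def:hypoLimit:i13absoc} is correctly handled (failure is along a subsequence, not tail), so the diagonal choice of $(i_k,x_k,\gamma_k)$ must be done carefully, and the case distinction on whether $V(p)$ is finite or not must be carried out to place a suitable witness in $\hypo V$. Once these points are in place the rest is routine.
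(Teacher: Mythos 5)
Your overall strategy coincides with the paper's: both implications are proved by contradiction, with upper semi-continuity of $V$ doing the work in the direction ``hypo-convergence $\Rightarrow$ sequential condition'' and an extraction of violating hypograph points in the other. Your converse direction is correct, and in fact more explicit than the paper's, which compresses the case analysis on $V(p)=\infty$ and on possibly unbounded $\gamma_k$ into the single assertion that the violating points satisfy $L_i(x_i)>V(p)+\varepsilon/2$; your subsequence-to-sequence step (extending $(x_k)$ to a full sequence converging to $p$ before invoking the hypothesis) is routine and fine.

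There is, however, a genuine gap in your forward direction, namely the step ``by u.s.c.\ of $V$ at $p$, after possibly shrinking $\delta$, also $V(z)<V(p)+\delta$ for every $z\in\oBall(p,2\delta)$.'' Upper semi-continuity yields, for each tolerance $\epsilon>0$, \emph{some} radius $r(\epsilon)>0$ with $V<V(p)+\epsilon$ on $\oBall(p,r(\epsilon))$, but it does not yield $r(\epsilon)\ge 2\epsilon$; and shrinking $\delta$ tightens the required bound $V(p)+\delta$ at the same rate as it shrinks the ball, so the two requirements need never be met simultaneously. Concretely, for $X=\mathbb{R}$, $p=0$ and $V(z)=\sqrt{|z|}$ (continuous, hence u.s.c.), the condition $V(z)<\delta$ on $\oBall(0,2\delta)$ forces $\sqrt{2\delta}\le\delta$, i.e.\ $\delta\ge 2$, which is incompatible with your other constraint $3\delta<\limsup_i L_i(x_i)-V(p)$ whenever that gap is at most $6$; so no admissible $\delta$ need exist. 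The repair is exactly the device used in the paper: decouple the level from the radius. Fix $\lambda\in\mathbb{R}$ with $L_i(x_i)\ge\lambda>V(p)$ for infinitely many $i$; since $\limsup_{z\to p}V(z)\le V(p)<\lambda$, you may choose $\varepsilon>0$ with $V(q)<\lambda-\varepsilon$ for all $q\in\oBall(p,2\varepsilon)$ (here the target bound $\lambda-\varepsilon$ stays bounded away from $V(p)$ as $\varepsilon\to 0$, so sufficiently small $\varepsilon$ always works), and then run your argument with the hypograph points $(x_i,\lambda)$ and the hypo-convergence inclusion at tolerance $\varepsilon$. With that modification the forward direction closes.
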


\begin{proof}
For sufficiency, let $p \in X$ and $\varepsilon > 0$, and assume that
the condition in Definition \ref{def:hypoLimit:i13absoc} does not
hold. Then there exists a sequence $(x_i)_{i \in \mathbb{N}}$ in $X$
converging to $p$ and satisfying $L_i(x_i) > V(p) + \varepsilon / 2$
for infinitely many $i \in \mathbb{N}$. This implies
$
\limsup_{i \to \infty} L_i (x_i)
>
V(p)
$,
which is a contradiction.
For necessity, assume that the latter inequality holds for some
$p \in X$ and some sequence $(x_i)_{i \in \mathbb{N}}$ in $X$
converging to $p$.
Then $L_i(x_i) \ge \lambda > V(p)$ for some $\lambda \in \mathbb{R}$
and infinitely many $i \in \mathbb{N}$.
In addition, as $V$ is u.s.c., there exists $\varepsilon > 0$ such
that $V(q) < \lambda - \varepsilon$ for all
$q \in \oBall(p,2 \varepsilon)$.
As $V = \hypolim_{i \to \infty} L_i$
there exists a neighborhood $N \subseteq X$ of $p$ such that
\ref{e:def:hypoLimit:i13absoc} holds for all sufficiently large
$i \in \mathbb{N}$. Then there exists some $i$ such that
$x_i \in \oBall(p,\varepsilon)$ and
$(x_i,\lambda) \in \oBall(\hypo V,\varepsilon)$.
In turn, there exists $(q,\alpha) \in \hypo V$ such that
$\lambda < \alpha + \varepsilon$
and $x_i \in \oBall(q,\varepsilon)$. This implies
$q \in \oBall(p,2 \varepsilon)$, hence
$\alpha \le V(q) < \lambda - \varepsilon$, which is a contradiction.
\end{proof}

\subsection{Some Results on Semi-Continuous Maps}
\label{appx:maths}

\noindent
Throughout, $X$ and $Y$ are metric spaces.
See \cite{HuPapageorgiou97.i,KosmolMullerWichards11}.

\begin{theorem}[Berge's Maximum Theorem]
\label{th:BergesMaximumTheorem}
Let $H \colon X \rightrightarrows Y$ be compact-valued and u.s.c.,
and let $f \colon X \times Y \to \intcc{ -\infty, \infty }$ be u.s.c..
Then the map $g \colon X \to \intcc{ -\infty, \infty }$ defined by
$g(x) = \sup \Menge{f(x,y)}{y \in H(x)}$ is u.s.c..
\end{theorem}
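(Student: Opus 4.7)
The plan is to prove that $\hypo g \subseteq X \times \mathbb{R}$ is closed, which by the characterization of upper semi-continuity recalled in Section~\ref{s:prelims} is equivalent to $g$ being u.s.c.

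First, I would note that for every fixed $x \in X$, the section $f(x,\cdot) \colon Y \to \intcc{-\infty,\infty}$ is u.s.c. on the compact set $H(x)$, and hence attains its supremum there: the superlevel sets $\Menge{y \in H(x)}{f(x,y) \ge \alpha}$ are closed subsets of the compact set $H(x)$, so by a standard finite-intersection argument the value $g(x)$ is realized at some point of $H(x)$. This reduces the supremum in the definition of $g$ to a maximum at each point.

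Next, given a sequence $(x_n,\gamma_n)_{n \in \mathbb{N}}$ in $\hypo g$ converging to $(x_0,\gamma_0) \in X \times \mathbb{R}$, I would pick points $y_n \in H(x_n)$ satisfying $f(x_n,y_n) = g(x_n) \ge \gamma_n$. The decisive step is to invoke Proposition~\ref{prop:uscCompact}: for u.s.c. compact-valued maps, any sequence $(y_n)$ with $y_n \in H(x_n)$ and $x_n \to x_0$ admits a subsequence $(y_{n_k})$ converging to some $y_0 \in H(x_0)$. Applying the upper semi-continuity of $f$ at $(x_0,y_0)$ then yields
\[
\gamma_0 = \lim_{k \to \infty} \gamma_{n_k} \le \limsup_{k \to \infty} f(x_{n_k},y_{n_k}) \le f(x_0,y_0) \le g(x_0),
\]
so $(x_0,\gamma_0) \in \hypo g$ and $\hypo g$ is closed, as required.

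The main obstacle is the subsequence extraction: one needs simultaneously the upper semi-continuity and the compact-valuedness of $H$, and neither hypothesis alone suffices. The ``usco'' bundle invoked elsewhere in the paper (see Lemma~\ref{lem:AdmissibilityOfAuxiliaryProblem}) is precisely tailored to this situation via Proposition~\ref{prop:uscCompact}. Everything else is routine: the attainment of the supremum on each compact fiber of $H$ and the final chain of inequalities that transports the hypograph condition to the limit point.
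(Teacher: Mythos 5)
The paper does not prove this statement at all: Theorem~\ref{th:BergesMaximumTheorem} is quoted as a known result, with the reader referred to the cited monographs. Your argument is the standard direct proof of the upper-semi-continuous half of Berge's theorem, and it is correct: attainment of the supremum on each compact fiber via nested nonempty closed superlevel sets, extraction of a convergent subsequence $y_{n_k}\to y_0\in H(x_0)$ via Proposition~\ref{prop:uscCompact}, and the closing chain $\gamma_0\le\limsup_k f(x_{n_k},y_{n_k})\le f(x_0,y_0)\le g(x_0)$, which indeed shows $\hypo g$ is closed (and closedness of $\hypo g$ is equivalent to the $\limsup$ form of upper semi-continuity used for $f$, so the two characterizations are interchangeable here). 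You correctly identify the joint use of compact-valuedness and upper semi-continuity of $H$ as the one non-routine step.

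One caveat worth recording: your construction of $y_n$ presupposes $H(x_n)\neq\emptyset$. Under the paper's convention $\sup\emptyset=0$, the statement as literally written can in fact fail when $H$ has empty values (take $H(x)=\{0\}$ for $x\le 0$, $H(x)=\emptyset$ for $x>0$, $f\equiv-5$; then $H$ is u.s.c.\ and compact-valued but $g$ jumps up from $-5$ to $0$ at the boundary). So the theorem should be read with $H$ strict, i.e.\ nonempty-valued, which is exactly the situation in every place the paper invokes it (the transition maps $F$, $\alpha$, $\beta$ in Corollary~\ref{cor:th:OptimalityPrinciples:MAX} and Lemma~\ref{lem:AdmissibilityOfAuxiliaryProblem} are all strict). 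With that reading, your proof is complete.
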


\begin{proposition}
\label{prop:usc_stability_of_monotone_convergence}
Let $\Omega \subseteq X$ be compact, and suppose that the sequence
$(f_k)_{k \in \mathbb{N}}$ of u.s.c.~maps
$f_k \colon X \to \intcc{-\infty,\infty}$ is monotonically decreasing
and converges pointwise to
$g \colon X \to \intcc{-\infty,\infty}$. Then
$
\lim_{k \to \infty} \sup_{x \in \Omega} f_k( x )
=
\sup_{x \in \Omega} g(x)
$,
where limits are understood to take values in
$\intcc{ -\infty, \infty }$.
\end{proposition}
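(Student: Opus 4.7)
The plan is to prove the nontrivial inequality
$\lim_{k \to \infty}\sup_{\Omega} f_k \le \sup_{\Omega} g$;
the reverse inequality is immediate from the fact that monotone decrease and pointwise convergence give $f_k \ge g$ pointwise, whence $\sup_\Omega f_k \ge \sup_\Omega g$ for every $k$. Monotonicity of $(f_k)$ also guarantees that the sequence $(\sup_\Omega f_k)_{k}$ is monotonically decreasing in $\intcc{-\infty,\infty}$, so the limit $L := \lim_k \sup_\Omega f_k$ exists.

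For the hard direction, I would exploit the classical fact that any u.s.c. map $f_k\colon X \to \intcc{-\infty,\infty}$ attains its supremum on the nonempty compact set $\Omega$ (the case $\Omega=\emptyset$ is trivial in view of the convention $\sup\emptyset = 0$). Accordingly, pick $x_k \in \Omega$ with $f_k(x_k) = \sup_\Omega f_k$. By compactness of $\Omega$ a subsequence $(x_{k_j})_j$ converges to some $x^\ast \in \Omega$. For any fixed $m \in \mathbb{N}$ and all $j$ large enough that $k_j \ge m$, the monotonicity of $(f_k)$ yields
\[
\sup_\Omega f_{k_j} \;=\; f_{k_j}(x_{k_j}) \;\le\; f_m(x_{k_j}).
\]

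Taking $j \to \infty$ and invoking the upper semi-continuity of $f_m$ at $x^\ast$ gives $L \le \limsup_{j} f_m(x_{k_j}) \le f_m(x^\ast)$. Since this holds for every $m$ and the sequence $f_m(x^\ast)$ decreases to $g(x^\ast)$, we conclude $L \le g(x^\ast) \le \sup_\Omega g$, as required. The main technical subtlety is just the attainment of the supremum by each u.s.c. $f_k$ on the compact set $\Omega$, which is a standard consequence of closedness of the hypograph; everything else is a straightforward diagonal-style argument combining compactness, monotonicity, and upper semi-continuity.
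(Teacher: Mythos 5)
Your argument is correct. Note that the paper itself gives no proof of this proposition --- it is stated in the appendix as a known auxiliary fact with pointers to the literature (Hu--Papageorgiou, Kosmol--M\"uller-Wichards) --- so there is no in-paper proof to compare against; your Dini-type argument (attainment of the supremum by each u.s.c.\ $f_k$ on the compact set, a convergent subsequence of maximizers, then monotonicity plus upper semi-continuity of a fixed $f_m$ at the limit point) is the standard route and handles the extended-real-valued cases and the empty-set convention correctly.
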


\begin{proposition}
\label{prop:uscCompact}
The map $H \colon X \rightrightarrows Y$ is compact-valued and
u.s.c.~iff the following condition holds:\\
If $(x_k,y_k)_{k \in \mathbb{N}}$ is a sequence in the graph of $H$
and $(x_k)_{k \in \mathbb{N}}$ converges to $p \in X$, then there exists a
subsequence of $(y_k)_{k \in \mathbb{N}}$ converging to some point in
$H(p)$.
\end{proposition}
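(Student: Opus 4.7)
The plan is to prove both directions directly from the definition of upper semi-continuity given in Section~\ref{s:prelims}, namely that $f^{-1}(\Omega)$ is closed whenever $\Omega \subseteq Y$ is closed, together with sequential compactness in metric spaces. The converse is the easier direction and I would dispatch it first.

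For the converse direction, assume the sequential condition. To see that $H(p)$ is compact for every $p \in X$, take any sequence $(y_k)_{k \in \mathbb{N}}$ in $H(p)$; apply the condition to the constant sequence $x_k = p$ to extract a subsequence of $(y_k)$ converging to a point in $H(p)$. Since $Y$ is metric, sequential compactness implies compactness. To see that $H$ is u.s.c., let $\Omega \subseteq Y$ be closed and let $(x_k)_{k \in \mathbb{N}}$ be a sequence in $H^{-1}(\Omega)$ converging to $p \in X$. Pick $y_k \in H(x_k) \cap \Omega$ for each $k$. By the condition, a subsequence of $(y_k)$ converges to some $y \in H(p)$, and closedness of $\Omega$ yields $y \in \Omega$, so $p \in H^{-1}(\Omega)$.

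For the forward direction, assume $H$ is compact-valued and u.s.c., and let $(x_k,y_k)$ be a sequence in the graph of $H$ with $x_k \to p$. The main idea is: for every $\varepsilon > 0$, the set $\Omega_\varepsilon = Y \setminus \oBall(H(p), \varepsilon)$ is closed, hence $H^{-1}(\Omega_\varepsilon)$ is closed. Since $H(p) \subseteq \oBall(H(p), \varepsilon)$, we have $p \notin H^{-1}(\Omega_\varepsilon)$, so there is a neighborhood of $p$ disjoint from $H^{-1}(\Omega_\varepsilon)$, and therefore $y_k \in \oBall(H(p), \varepsilon)$ for all sufficiently large $k$. Applying this with $\varepsilon = 1/n$, for every $n \in \mathbb{N}$ there is some $k_n$ (which may be chosen strictly increasing) and some $z_n \in H(p)$ such that $d(y_{k_n}, z_n) < 1/n$. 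By compactness of $H(p)$, a subsequence of $(z_n)$ converges to some $z \in H(p)$, and the corresponding subsequence of $(y_{k_n})$ converges to the same $z$, which completes the proof.

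The main obstacle is the forward direction, specifically the step that translates the topological definition of u.s.c. (preimages of closed sets are closed) into the ``$y_k$ eventually lies in any $\varepsilon$-neighborhood of $H(p)$'' statement; the rest is a routine diagonal-style extraction using compactness of $H(p)$. No nontrivial machinery beyond basic metric-space compactness is needed.
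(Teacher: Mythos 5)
Your proof is correct and complete. Note that the paper gives no proof of Proposition~\ref{prop:uscCompact} at all: it is listed among the auxiliary facts in the appendix with a pointer to the cited textbooks, so there is no in-paper argument to compare against. Your argument is the standard one and meshes correctly with the paper's conventions: the paper defines u.s.c.\ for set-valued maps by requiring that $H^{-1}(\Omega)=\Menge{x}{H(x)\cap\Omega\neq\emptyset}$ be closed for every closed $\Omega\subseteq Y$, which is exactly what your forward-direction step exploits (taking $\Omega_\varepsilon$ to be the complement of the open $\varepsilon$-neighborhood of $H(p)$, so that membership of $p$'s neighborhood in the complement of $H^{-1}(\Omega_\varepsilon)$ forces $H(x)\subseteq\oBall(H(p),\varepsilon)$ there), and what your converse-direction step establishes via sequential closedness, valid since $X$ and $Y$ are metric spaces here. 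The only point worth a passing remark is the degenerate case $H(p)=\emptyset$: in the forward direction your neighborhood argument (applied with $\Omega=Y$) shows that no sequence $(x_k,y_k)$ in the graph with $x_k\to p$ can then exist, so the condition holds vacuously, and in the converse direction the empty set is compact; neither affects correctness.
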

\begin{corollary}
\label{cor:uscCompact}
If the map $H \colon X \rightrightarrows Y$ is compact-valued and
u.s.c., then $H(\Omega)$ is compact for all compact subsets
$\Omega \subseteq X$.
\end{corollary}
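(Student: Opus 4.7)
The plan is to derive this corollary directly from the sequential characterization of compact-valued u.s.c. maps given in Proposition \ref{prop:uscCompact}, by showing that $H(\Omega)$ is sequentially compact in the metric space $Y$.

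First, I would fix an arbitrary sequence $(y_k)_{k \in \mathbb{N}}$ in $H(\Omega) = \bigcup_{x \in \Omega} H(x)$. By the definition of the image, for every $k$ there exists some $x_k \in \Omega$ with $y_k \in H(x_k)$, so that $(x_k, y_k)$ lies in the graph of $H$. Since $\Omega$ is compact (hence sequentially compact, as $X$ is a metric space), there is a subsequence $(x_{k_j})_{j \in \mathbb{N}}$ converging to some $p \in \Omega$.

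Next, I would apply Proposition \ref{prop:uscCompact} to the sequence $(x_{k_j}, y_{k_j})_{j \in \mathbb{N}}$ in the graph of $H$. By that proposition, a subsequence of $(y_{k_j})_{j \in \mathbb{N}}$ converges to some point $q \in H(p)$. Since $p \in \Omega$, this limit $q$ belongs to $H(\Omega)$. Thus every sequence in $H(\Omega)$ admits a subsequence converging to a point of $H(\Omega)$, which is precisely sequential compactness, and hence compactness in the metric space $Y$.

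There is no real obstacle here; the corollary is a direct consequence of Proposition \ref{prop:uscCompact} combined with the sequential characterization of compactness in metric spaces. The only minor bookkeeping point is to carry the subsequence indices correctly and to remember that the limit $p$ of the subsequence $(x_{k_j})$ lies in $\Omega$ because $\Omega$ is compact (in particular closed), so that $H(p) \subseteq H(\Omega)$.
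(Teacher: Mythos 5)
Your argument is correct and is exactly the route the paper intends: the result is stated as an immediate corollary of Proposition \ref{prop:uscCompact} (with no written proof, the statement being cited from the literature), and the standard sequential-compactness argument you give — lift a sequence in $H(\Omega)$ to the graph, extract a convergent subsequence of base points using compactness of $\Omega$, then apply the proposition — is the natural way to fill it in. The bookkeeping you flag (that the limit $p$ lies in $\Omega$, so $H(p) \subseteq H(\Omega)$) is handled correctly.
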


\bibliographystyle{IEEEtran}
\bibliography{MR/cdc13,GR/IEEEtranBSTCTL,GR/preambles,GR/mrabbrev,GR/strings,GR/fremde,GR/eigeneCONF,GR/eigeneJOURNALS,GR/eigenePATENT,GR/eigeneREPORTS,GR/eigeneTALKS,GR/eigeneTHESES}
\ifx\arxivVersion\undefined%
\def\extension{\ifpdf pdf\else eps\fi}
\IfFileExists{GR/greissig_bio}{%
\IfFileExists{GR/greissig2011.\extension}{%
\begin{IEEEbiography}%
[{\includegraphics[width=1in,height=1.25in,clip,keepaspectratio]{GR/greissig2011}}]%
{Gunther Reissig}
\input{GR/greissig_bio}
\end{IEEEbiography}
}{%
\begin{IEEEbiographynophoto}{Gunther Reissig}
\input{GR/greissig_bio}
\end{IEEEbiographynophoto}
}%
}{}
\IfFileExists{MR/mrungger_bio}{%
\IfFileExists{MR/mrungger.\extension}{%
\begin{IEEEbiography}%
[{\includegraphics[width=1in,height=1.25in,clip,keepaspectratio]{MR/mrungger}}]%
{Matthias Rungger}
\input{MR/mrungger_bio}
\end{IEEEbiography}
}{%
\begin{IEEEbiographynophoto}{Matthias Rungger}
\input{MR/mrungger_bio}
\end{IEEEbiographynophoto}
}%
}{}
\makeatletter
\global\let\extension\@undefined
\makeatother
\fi
\ifx\DraftVersion\undefined\relax\else%
\IfFileExists{ReplyToReviewers.tex}{%
\cleardoublepage
\ifCLASSOPTIONtwocolumn\noindent\onecolumn\large\fi
\include{ReplyToReviewers}
}{}%
\fi
\end{document}